\documentclass[12pt,centertags,oneside]{amsart}
\usepackage{amsmath,amstext,amsthm,amscd,typearea,hyperref}
\usepackage{amssymb}
\usepackage{a4wide}
\usepackage[mathscr]{eucal}
\usepackage{mathrsfs}
\usepackage{typearea}
\usepackage{charter}
\usepackage{pdfsync}
\usepackage[a4paper,width=16.5cm,top=3cm,bottom=3cm]{geometry}

\numberwithin{equation}{section}

\allowdisplaybreaks
\tolerance=1
\emergencystretch=\maxdimen
\hyphenpenalty=10000
\hbadness=10000

\usepackage{multicol}

\usepackage{xcolor}



\newtheorem{theorem}{Theorem}[section]
\newtheorem{definition}[theorem]{Definition}
\newtheorem{proposition}[theorem]{Proposition}
\newtheorem{corollary}[theorem]{Corollary}
\newtheorem{lemma}[theorem]{Lemma}

\newtheorem*{definition*}{Definition}

\newtheorem{mainthm}{Theorem}

\newcommand{\cali}[1]{\mathscr{#1}}

\newcommand{\GL}{{\rm GL}}

\newcommand{\supp}{{\rm supp}}

\newcommand{\diff}{{\rm d}}

\renewcommand{\Re}{\mathop{\mathrm{Re}}}

\renewcommand{\GL}{{\rm GL}}

\newcommand{\ep}{\epsilon}

\newcommand{\Leb}{{\rm Leb}}
\newcommand{\Lip}{\mathop{\mathrm{Lip}}\nolimits}

\newcommand{\Ac}{\cali{A}}
\newcommand{\Bc}{\cali{B}}
\newcommand{\Cc}{\cali{C}}

\newcommand{\Ec}{\cali{E}}

\newcommand{\Gc}{\cali{G}}
\newcommand{\Hc}{\cali{H}}
\newcommand{\Ic}{\cali{I}}
\newcommand{\Jc}{\cali{J}}

\newcommand{\Lc}{\cali{L}}

\newcommand{\Tc}{\cali{T}}

\newcommand{\B}{\mathbb{B}}

\newcommand{\C}{\mathbb{C}}

\newcommand{\N}{\mathbb{N}}
\newcommand{\Z}{\mathbb{Z}}
\newcommand{\Q}{\mathbb{Q}}
\newcommand{\R}{\mathbb{R}}

\renewcommand\P{\mathbb{P}}

\newcommand{\E}{\mathbf{E}}

\newcommand{\lp}{\langle}
\newcommand{\rp}{\rangle}

\newcommand{\norm}[1]{\lVert#1\rVert}

\newcommand{\oA}{\mathcal{A}}

\newcommand{\oP}{\mathcal{P}}
\newcommand{\oN}{\mathcal{N}}
\newcommand{\oR}{\mathcal{R}}
\newcommand{\oE}{\mathcal{E}}
\newcommand{\oU}{\mathcal{U}}
\newcommand{\oQ}{\mathcal{Q}}
\newcommand{\oS}{\mathcal{S}}
\newcommand{\oL}{\mathcal{L}}




\title[Berry-Esseen bounds with targets and LLT for products of random matrices]{Berry-Esseen bounds with targets and Local Limit Theorems for products of random matrices}

\author{Tien-Cuong Dinh}
\address{Department of Mathematics,  National University of Singapore - 10, Lower Kent Ridge Road - Singapore 119076}
\email{matdtc@nus.edu.sg}

\author{Lucas Kaufmann}
\address{Center for Complex Geometry - Institute for Basic Science (IBS) - 55 Expo-ro Yuseong-gu Daejeon 34126 South Korea}
\email{lucaskaufmann@ibs.re.kr}

\author{Hao Wu}
\address{Department of Mathematics,  National University of Singapore - 10, Lower Kent Ridge Road - Singapore 119076}
\email{matwu@nus.edu.sg}

\thanks{This work was supported by the NUS and MOE grants  AcRF Tier 1 R-146-000-319-114 and MOE-T2EP20120-0010. L. Kaufmann was supported by the Institute for Basic Science (IBS-R032-D1)}

\begin{document}

\begin{abstract}
Let $\mu$ be a probability measure on $\GL_d(\R)$ and denote by $S_n:= g_n \cdots g_1$ the associated random matrix product, where $g_j$'s are i.i.d.'s with law $\mu$. We study  statistical properties of random variables of the form $$\sigma(S_n,x) + u(S_n x),$$ where $x \in \P^{d-1}$, $\sigma$ is the norm cocycle and $u$ belongs to a class of admissible functions on $\P^{d-1}$ with values in $\R \cup \{\pm \infty\}$.  

 Assuming that $\mu$ has a finite exponential moment and generates a proximal and strongly irreducible semigroup, we obtain optimal Berry-Esseen bounds and the Local Limit Theorem for such variables using a large class of observables on $\R$ and H\"older continuous target functions on $\P^{d-1}$. As particular cases, we obtain new limit theorems for $\sigma(S_n,x)$ and for the coefficients of $S_n$.
\end{abstract}

\clearpage\maketitle
\thispagestyle{empty}


\section{Introduction}

Let $\mu$ be a probability measure on $G:=\GL_d(\R)$ with $d \geq 2$.  Consider the random walk on $G$ induced by $\mu$ given by
$$S_n: = g_n \cdots g_1,$$ 
where  $n \geq 1$ and the $g_j$'s are independent and identically distributed (i.i.d.)\ random elements of $G$ with law  $\mu$.  
The study of the statistical properties of various natural functions associated with $S_n$ is a well-developed topic with an intense recent activity.  We refer to \cite{bougerol-lacroix,benoist-quint:book} for  an overview  of the fundamental results and the reference list for some of the recent results.

The group $G$ acts naturally on the real projective space $\P^{d-1}$ and, in order to study the random walk induced by $\mu$, one usually considers various real valued functions on $G$ and $\P^{d-1}$. An important function in this setting is the \textit{norm cocycle} $\sigma: G \times \P^{d-1} \to \R$, given by
\begin{equation} \label{eq:cocycle-def}
\sigma(g,x) = \sigma_g(x):= \log \frac{\norm{gv}}{\norm{v}}, \quad \text{for }\,\, v \in \R^d \setminus \{0\}, \, x = [v] \in \P^{d-1}   \, \text{ and } g \in G.
\end{equation}

The cocycle relation $\sigma(g_2g_1,x) = \sigma(g_2,g_1 \cdot x) + \sigma(g_1,x)$ can be used  to effectively apply methods such as the spectral theory of complex transfer operators (see Subsection \ref{subsec:markov-op}) and martingale approximation \cite{benoist-quint:CLT}. This yields several limit theorems for the sequence of random variables $\sigma(S_n,x)$ analogous to classical limit theorems for sums of real i.i.d's. From that, similar limit theorems can be obtained for other natural random variables, such as $\log \| S_n \| = \sup_{x \in \P^{d-1}} \sigma(S_n,x)$, the spectral radius of $S_n$, the coefficients of $S_n$, etc.

The goal of this work is to obtain limit theorems for random variables of the form 
\begin{equation} \label{eq:sigma+u}
\sigma(S_n,x) + u(S_n x),
\end{equation}
where $u$ belongs to a class of admissible functions on $\P^{d-1}$ with values in $\R \cup \{\pm \infty\}$ (see Definition \ref{def:admissible}). These include the variables $\sigma(S_n,x)$, corresponding to $u=0$, and also the logarithm of the absolute value of the coefficients of $S_n$, as we now explain.

  For $v \in \R^d$ and $f \in (\R^d)^*$,  its dual space,  we denote by $\lp f,v \rp := f(v)$ their natural coupling.  Observe that the $(i,j)$-entry of the matrix $S_n$ is given by $\lp e_i^* , S_n  e_j \rp$,  where $(e_k)_{1\leq k \leq d}$ (resp. $(e^*_k)_{1\leq k \leq d}$) denotes the canonical basis of $\R^d$ (resp.$(\R^d)^*$).   More generally, one can study random variables of the form
$$\log{ |\lp f, S_n v\rp | \over \norm{f} \norm{v}},$$
with  $v \in \R^d \setminus \{0\}$ and $f \in (\R^d)^* \setminus \{0\}$. We refer to \cite{benoist-quint:book,cuny-dedecker-merlevede-peligrad-2,DKW:LLT,DKW:BE-LLT-coeff,grama-quint-xiao,xiao-grama-liu:coeff} for limit theorems for the above variables. One can easily check that
 \begin{equation} \label{eq:coeff-split}
\log{ |\lp f, S_n v \rp | \over \norm{f} \norm{v}} = \sigma(S_n,x) + \log \delta(S_n x,y),
\end{equation}
where $x = [v] \in \P^{d-1}$, $y = [f] \in (\P^{d-1})^*$ and $\delta(x,y):= \frac{ |\lp f, v \rp |}{\norm{f} \norm{v}}$.  Moreover, $\delta(x,y) = d (x,H_y)$, where $H_y := \P(\ker f)$ is the hyperplane in $\P^{d-1}$ defined by $y$ and $d$ is a natural distance on $\P^{d-1}$ (see Section \ref{sec:prelim}). In particular, we obtain random variables of the form \eqref{eq:sigma+u} by choosing $u(x) = \log d(x,H_y)$.

In order to obtain meaningful results,  some standard assumptions on the measure $\mu$ need to be made.   Recall that a matrix $g\in G$ is said to be \textit{proximal} if it admits a unique eigenvalue of maximal modulus which is moreover of multiplicity one.  Let  $\Gamma_\mu$ be the smallest closed semigroup containing the support of $\mu$.  We assume that $\Gamma_\mu$ is \textit{proximal}, that is,  it contains a proximal matrix,  and  \textit{strongly irreducible},  that is,  the action of $\Gamma_\mu$ on $\R^d$ does not preserve a finite union of proper linear subspaces.  It is well-known that, under the above conditions, $\mu$ admits a unique stationary probability measure on $\P^{d-1}$, denoted by $\nu$, see Section \ref{sec:prelim}.  We'll also assume that $\mu$ has a \textit{finite exponential moment}, that is,  $\int_{G} N(g)^\varepsilon \diff\mu(g) < \infty$ for some $\varepsilon>0$, where $N(g):=\max\big( \norm{g},\norm{g^{-1}} \big)$. 
 
Recall that the \textit{first Lyapunov exponent} associated with $\mu$ is the following limit,  which is known to exist
$$\gamma := \lim_{n \to \infty} \frac1n \,\E \big( \log\|S_n\| \big)=\lim_{n \to \infty} \frac1n \int \log\|g_n \cdots g_1\| \, \diff \mu(g_1) \cdots \diff \mu(g_n).$$
Furstenberg-Kesten's theorem \cite{furstenberg-kesten} says that the sequence $\frac1n \log\|S_n\|$ converges to $\gamma$ almost surely. This is the analogue of the Law of Large Numbers for sums of real i.i.d.'s.  Under our assumptions, the norm cocycle also satisfies the Central Limit Theorem (CLT), as shown by Le Page in \cite{lepage:theoremes-limites},  see also \cite{benoist-quint:CLT} for a new proof which holds for measures  with a finite second moment.  The theorem says that,  for every $x\in \P^{d-1}$,  we have that
\begin{equation} \label{eq:CLT-intro}
{1\over \sqrt{n}} \big(\sigma(S_n,x)-n\gamma\big) \longrightarrow \cali N(0;\varrho^2)
\end{equation}
in law as $n \to \infty$, where $\varrho>0$ is a constant and $\cali N(0;\varrho^2)$ denotes the centered normal distribution with variance $\varrho^2$.  Le Page also obtained a ``coupled"  CLT  for the random variable $\big(\sigma(S_n,x), S_n x\big) \in \R \times \P^{d-1}$, in the spirit of Theorem \ref{thm:BE-general} below.  A CLT for the coefficients of $S_n$ can also be obtained,  see \cite{guivarch-raugi,benoist-quint:book,DKW:BE-LLT-coeff,xiao-grama-liu:coeff}.

Once the CLT has been established,  it is natural to ask whether we can estimate the speed of convergence in \eqref{eq:CLT-intro}. In the present work, we obtain such bounds for various test functions on the couple $\big(\sigma(S_n,x) + u(S_n), S_n x\big)$,  as announced in \cite{DKW:BE-LLT-coeff}.  In analogy with the case of sums of real i.i.d.'s,  the best bound one can hope for is $O(1 / \sqrt n)$.  For the norm cocycle,  some cases were obtained by Le Page  \cite{lepage:theoremes-limites} and Xiao-Grama-Liu \cite{xiao-grama-liu},  see below for more details.   For the coefficients,  this question was treated by the authors in \cite{DKW:BE-LLT-coeff}.  Later, a version for the couple $\big(\log{ |\lp f, S_n v \rp | \over \norm{f} \norm{v}}, S_n x \big)$ was announced in \cite{xiao-grama-liu:coeff}.  We deal here with more general test functions which include these  known cases.  See also \cite{cuny-dedecker-merlevede-peligrad,cuny-dedecker-merlevede-peligrad-2,DKW:LLT} for partial results under lower moment conditions.

For general admissible functions $u$, the above results are delicate to obtain, as the case  $u(x) = \log d(x,H_y)$ already reveals. In particular, they cannot be easily deduced form the corresponding theorems for $\sigma(S_n,x)$. The main challenge comes from the appearance of singularities in the phase space $\P^{d-1}$, see e.g.\ \cite{grama-quint-xiao}.  In the recent work \cite{DKW:BE-LLT-coeff}, we have introduced new ideas allowing us to bypass this difficulty, yielding sharp bounds.  As mentioned in that paper,  our method can be pushed further to produce similar Berry-Esseen bounds for  general admissible functions $u$ and large class of test functions on $\R \times \P^{d-1}$. This is the content of the present work.

We now state our main results. We refer to Section \ref{sec:admissible-fcns} for the definition of the class of admissible functions $\Lc(\eta_*,\alpha_*,A_*)$. In particular, the functions $u=0$ and $u(x) = \log d(x,H_y)$, where $H_y \subset \P^{d-1}$ is a hyperplane as above, are admissible for suitable choices of $\eta_*,\alpha_*,A_*$, see Lemma \ref{lemma:log-dist-admissible}. Therefore, our results hold for the norm cocycle and the coefficients of $S_n$. Moreover, in contrast with previous works, our proofs show that these cases can be treated simultaneously.

Our first result is a Berry-Esseen bound with optimal rate $O(1/\sqrt n)$ for the sequence of $\R \times \P^{d-1}$-valued random variables $\big(\sigma(S_n,x) + u(S_n x), S_n x \big)$. This is phrased in terms of general test functions, or ``target functions'', on $\R \times \P^{d-1}$.    Denote by $\Hc$ the space consisting of bounded Lipschitz functions $\psi$ on $\R$ such that  $\psi'$ is bounded and belongs to $L^1(\R)$.  Define a norm on $\Hc$ by 
$$\norm{\psi}_\Hc:=\norm{\psi}_\infty+\norm{\psi'}_\infty +\norm{\psi'}_{L^1}.$$ 
For an interval $J\subset\R$,  set  $\psi_J(t):= \mathbf 1_{t\in J}\cdot \psi(t)$. For $0<\alpha \leq 1$, let  $\Cc^{\alpha}(\P^{d-1})$ be the space of $\alpha$-H\"older continuous functions on $\P^{d-1}$.
 
\begin{mainthm}[Berry-Esseen bound with targets]\label{thm:BE-general}
	Let $\mu$ be a  probability measure on $\GL_d(\R)$.  Assume that $\mu$ has a finite exponential moment and  that $\Gamma_\mu$ is proximal and strongly irreducible.  Let $\gamma, \varrho$ and $\nu$ be as above.  Let $\eta_*>0$, $0<\alpha_*\leq 1$ and $A_*>0$ be constants. 
	Then, there are constants $0<\alpha \leq \alpha_*$ and $C>0$ such that, for any $\psi\in \Hc$, $\varphi\in\Cc^{\alpha}(\P^{d-1})$, $u\in \Lc(\eta_*,\alpha_*,A_*)$, any interval $J\subset\R$, and all $n\geq 1$, we have
\begin{align*}
	\bigg| \mathbf E \Big(  \psi_J\Big( {\sigma(S_n,x)+u(S_nx) - n \gamma \over \sqrt n}\Big)\cdot\varphi( S_n x  )\Big)  -  \frac{1}{\sqrt{2 \pi} \, \varrho} \int_{\P^{d-1}}\varphi \,\diff\nu \int_{J} e^{-\frac{s^2}{2 \varrho^2}}\psi(s)\, \diff s \bigg|  \\ \leq \frac{C}{\sqrt n}\norm{\psi}_\Hc\norm{\varphi}_{\Cc^\alpha}.
\end{align*}	
\end{mainthm}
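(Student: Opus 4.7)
The plan is to rewrite the left-hand side as a Fourier integral of characteristic functions expressed through a family of twisted transfer operators, then to isolate the Gaussian main term via spectral perturbation theory, while handling the possible singularities of $u$ by a logarithmic truncation. As a first step, I would apply an Esseen-type smoothing to the discontinuous factor $\mathbf 1_J$ in $\psi_J$ at scale $1/\sqrt n$. Since $\psi\in\Hc$ has integrable derivative, the smoothed version is a $C^1$-function on $\R$ whose Fourier transform lies in $L^1$ with norm controlled by $\|\psi\|_\Hc$. The smoothing itself produces an error $O(1/\sqrt n)$ after using a preliminary non-concentration estimate, coming from the spectral analysis, which says that the law of $\sigma(S_n,x)+u(S_nx)$ has uniformly bounded density near the endpoints of $J$. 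By Fourier inversion the matter is reduced to estimating
\begin{equation*}
\mathbf E\Big( e^{it(\sigma(S_n,x)+u(S_nx))}\varphi(S_n x)\Big) \;=\; P_t^n\big( e^{itu}\varphi\big)(x),
\end{equation*}
where $P_t\phi(x):=\int_G e^{it\sigma(g,x)}\phi(gx)\,d\mu(g)$ is the twisted Markov operator, integrated against the Fourier transform of $\psi$ after the rescaling $t\mapsto t/\sqrt n$.

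Second, I would exploit the spectral theory of $P_t$ on $\Cc^\alpha(\P^{d-1})$, for some $\alpha\leq\alpha_*$ to be chosen. For $|t|$ small, a standard perturbation of $P_0$ (which has a spectral gap and stationary measure $\nu$) yields a decomposition $P_t^n=\lambda(t)^n\Pi_t+N_t^n$ with $\lambda(t)=1+i\gamma t-\tfrac12\varrho^2 t^2+O(|t|^3)$ and $\|N_t^n\|\leq C\kappa^n$ for some $\kappa<1$. For $|t|$ large, a Le Page--Dolgopyat-type estimate, available under the theorem's hypotheses, provides polynomial-in-$|t|$ and exponential-in-$n$ control of $\|P_t^n\|_{\Cc^\alpha\to\Cc^\alpha}$. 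Substituting $t=\xi/\sqrt n$, the small-$|\xi|$ regime produces the Gaussian density through a Laplace expansion of $\lambda(\xi/\sqrt n)^n$ combined with continuity of $\Pi_t$ at $t=0$, while the large-$|\xi|$ regime is negligible thanks to the exponential operator decay tested against the $L^1$-bounded Fourier transform.

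The main obstacle is that the twisted observable $e^{itu}\varphi$ need not lie in $\Cc^\alpha$ uniformly in $t$ when $u$ is singular (for instance $u(x)=\log d(x,H_y)$). To overcome this I would truncate $u$ at level $M:=K\log n$ for $K$ large, setting $u_M:=\max(-M,\min(u,M))$. The admissibility class $\Lc(\eta_*,\alpha_*,A_*)$ is tailored so that the sets $\{|u|>M\}$ have stationary mass decaying exponentially in $M$, and so that $u_M$ inherits a quantitative H\"older-type modulus of continuity away from the singular locus. The resulting replacement error in the expectation is $O(n^{-K\eta_*})$ and is negligible for $K$ large, while the $\Cc^\alpha$-norm of $e^{itu_M}\varphi$ grows only polynomially in $|t|M$, absorbed by $\kappa^n$ in the large-$|\xi|$ regime and by the quadratic Gaussian damping in the small-$|\xi|$ one. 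The delicate point, which I expect to be the hardest step, is showing quantitatively that $\Pi_t(e^{itu_M}\varphi)(x)$ converges to $\int\varphi\,d\nu$ with an error compatible with the target rate $1/\sqrt n$, uniformly in $M$ up to the logarithmic scale chosen; this is where the full force of the admissibility conditions is used, in the spirit of the regularization argument introduced in \cite{DKW:BE-LLT-coeff}. Assembling the four contributions---Esseen smoothing, low-frequency Gaussian, high-frequency remainder, and truncation error---yields the claimed $O(1/\sqrt n)\|\psi\|_\Hc\|\varphi\|_{\Cc^\alpha}$ bound with a Hölder exponent $\alpha$ dictated by the Le Page--Dolgopyat estimate.
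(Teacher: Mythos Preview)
Your overall architecture---Fourier/transfer-operator methods, spectral perturbation of $\oP_{i\xi}$, and a logarithmic-scale truncation of $u$---is the right one and matches the paper's approach. However, there is a genuine gap at the smoothing step, and a secondary overclaim on the high-frequency side.

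\textbf{The circular smoothing step.} You write that smoothing $\mathbf 1_J$ at scale $1/\sqrt n$ costs $O(1/\sqrt n)$ ``after using a preliminary non-concentration estimate \dots which says that the law of $\sigma(S_n,x)+u(S_nx)$ has uniformly bounded density near the endpoints of $J$''. But such a statement is essentially the Berry--Esseen bound for indicator functions, i.e.\ the very theorem you are proving. No independent density bound for this law is available a priori. The paper avoids this circularity by a two-step scheme. First, for $\psi=\mathbf 1$ it applies a Feller-type smoothing inequality in which the smoothing error is paid by the \emph{reference} density $h$ (the Gaussian), not by the unknown $F_n$; the resulting integrand $(\phi_{F_n}(\xi)-\widehat h(\xi))/\xi$ has a genuine $O(1)$ singularity at $\xi=0$ once a nonconstant target $\varphi$ is present, and the paper's key observation (Lemma~\ref{lemma:BE-feller}) is that this singularity is removable in the sense of the Cauchy principal value, so one estimates $\int_0^{\xi_0\sqrt n}\frac{\Theta_b(\xi)-\Theta_b(-\xi)}{\xi}\,d\xi$ instead. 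Second, once the $\psi=\mathbf 1$ case is in hand, one subtracts $\psi(b)$ so that $\mathbf 1_{(-\infty,b]}\psi$ becomes globally Lipschitz; then a \emph{pointwise} convolution estimate (Lemma~\ref{lemma:convolution-lip}) gives the $O(1/\sqrt n)$ smoothing error with no probabilistic input whatsoever. Your direct route skips both devices and therefore does not close.

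\textbf{High frequencies.} You invoke a ``Le~Page--Dolgopyat-type'' bound giving polynomial-in-$|t|$, exponential-in-$n$ decay of $\|\oP_{it}^n\|$. In this generality only the compact-set bound of Proposition~\ref{prop:spec-Pxi} is available, with constants depending on the compact set; a Dolgopyat estimate uniform in large $|t|$ is not known here. Fortunately the Berry--Esseen argument does not need it: the smoothing kernel $\vartheta_{\delta_n}$ with $\delta_n^{-2}=\xi_0\sqrt n$ truncates the Fourier integral to $|\xi|\le \xi_0\sqrt n$, i.e.\ to $|z|\le\xi_0$ in the operator variable, so the entire analysis stays in the perturbative regime of Proposition~\ref{prop:spectral-decomp}. (The compact-set bound is used only for the LLT.)

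\textbf{Truncation versus partition.} Your $u_M=\max(-M,\min(u,M))$ is workable, but the paper's partition of unity $(\chi_k)_{|k|\le A\log n}$ adapted to level sets of $u$ (Lemma~\ref{lemma:partition-of-unity}) is what makes the characteristic-function computation clean: on $\supp\chi_k$ one replaces $u$ by the constant $-k$, which produces a genuine c.d.f.\ $F_n$ and the identity $\Phi_{n,\xi}+\Phi_n^\star=\varphi+\sum_k(e^{i\xi k/\sqrt n}-1)\chi_k\varphi$. This separates the main term $\varphi$ from a remainder that is simultaneously small in $|\xi|$ and controlled in $\Cc^\alpha$-norm by $n^{\alpha A}$, which is exactly the balance (with $\alpha A\le 1/6$) used in Lemmas~\ref{lemma:theta-1-bound}--\ref{lemma:theta-2-bound}. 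Your truncated observable $e^{itu_M}\varphi$ can be made to work along similar lines, but you should not expect to control its full $\Cc^\alpha$-norm by $\|\psi\|_\Hc$ alone; the polynomial growth $n^{\alpha A}$ is unavoidable and must be absorbed as in the paper.
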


The above theorem contains simultaneously the Berry-Esseen bound with targets for the norm cocycle and the coefficients of $S_n$. It generalizes previous results when $\psi = \mathbf 1$ and $u=0$ or $u(x) = \log d(x,H_y)$. For $u=0$,  this is due to Le Page when $\psi=\mathbf 1$ and $\varphi = \mathbf 1$,  see \cite{lepage:theoremes-limites,bougerol-lacroix}  and Xiao-Grama-Liu \cite{xiao-grama-liu} when $\psi=\mathbf 1$  and $\varphi$ is a general H\"older function.  For $u(x) = \log d(x,H_y)$, which encodes the coefficients of $S_n$,  the case $\psi=\mathbf 1$ and $\varphi = \mathbf 1$ was obtained very recently by the authors in \cite{DKW:BE-LLT-coeff} and, as mentioned in that paper, the method developed there turns out to be effective in the more general setting detailed here.  Using similar arguments,  Xiao-Grama-Liu have recently obtained the same result when  $\psi=\mathbf 1$  and  $\varphi$ is general  \cite{xiao-grama-liu:coeff}.

The difficulty in proving Berry-Esseen bounds in the presence of non-constant target functions $\varphi$ is explained in \cite[Subsection 1.2]{xiao-grama-liu}.  Roughly speaking,  when applying the Berry-Esseen lemma \cite[XVI.3]{feller:book} directly,  the integrand displays  a singularity at the origin that cannot be easily handled. This is already observed in the simplest case when $u=0$. To overcome this,  Xiao-Grama-Liu \cite{xiao-grama-liu} consider a complex contour around the origin and use Cauchy integral formula together with  the so called saddle point method.   In the present  work,  inspired by \cite{DKW:BE-LLT-coeff},  we observe that the above mentioned singularity is actually ``removable" by considering instead the Cauchy principal value of the integral along the imaginary line, see  Lemma \ref{lemma:BE-feller} below.  This provides a simple solution to the above technical difficulty.  In particular,  we are not required to consider perturbations of the Markov operator along the real axis.

It is not difficult to deduce from the above result weaker Berry-Esseen bounds with rate $O(\log n/\sqrt n)$ for the random variable  $\log\norm{S_n}$ and for the spectral radius of $S_n$ and similar test functions.  Since this rate is likely not optimal,  we choose not to consider these questions in this article.   Using the techniques developed here and the results of \cite{DKW:LLT},  we can also obtain analogues of Theorem \ref{thm:BE-general}  for random matrices in $\GL_2(\R)$ or $\GL_2(\C)$ under a {\it finite third moment condition},  which is the minimal moment condition one should require for this problem. 

\medskip

Theorem \ref{thm:BE-general} implies immediately the Central Limit Theorem for $\sigma(S_n,x) + u(S_n x)$, namely
\begin{equation*}
{1\over \sqrt{n}} \big(\sigma(S_n,x) + u(S_n x) -n\gamma\big) \longrightarrow \cali N(0;\varrho^2)
\end{equation*}
in law as $n \to \infty$. The corresponding local version of this theorem is given by the following result.

\begin{mainthm}[Local Limit Theorem] \label{thm:LLT-general}
	Let $\mu$ be a  probability measure on $\GL_d(\R)$.  Assume that $\mu$ has a finite exponential moment and  that $\Gamma_\mu$ is proximal and strongly irreducible.  Let $\gamma, \varrho$ and $\nu$ be as above. 
	Then,  for any $u\in \Lc(\eta_*,\alpha_*,A_*)$ and any continuous function $\Phi$ with compact support on $\R\times \P^{d-1}$, we have
	$$\lim_{n\to \infty}\sup_{t\in\R}\bigg| \sqrt{2\pi n}  \,\varrho \,\mathbf E \Big( \Phi\Big( t+ \sigma(S_n,x)+u(S_nx) - n \gamma, S_n x\Big) \Big)  -   e^{-\frac{t^2}{2 \varrho^2 n}}  \int_{\R\times\P^{d-1}} \Phi(s,w)\,\diff s\diff \nu(w)\bigg| =0.$$
\end{mainthm}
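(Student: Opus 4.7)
The plan is a Fourier-analytic approach via the perturbed Markov operators
\begin{equation*}
P_\xi \phi(x) := \int_G e^{-i\xi \sigma(g,x)}\phi(gx)\,\diff\mu(g),\qquad \xi\in\R,
\end{equation*}
whose iterates satisfy $P_\xi^n\phi(x) = \mathbf E[e^{-i\xi\sigma(S_n,x)}\phi(S_nx)]$ by the cocycle identity for $\sigma$. Setting $Y_n := \sigma(S_n,x) + u(S_nx) - n\gamma$, this yields
\begin{equation*}
\mathbf E\bigl[e^{-i\xi Y_n}\varphi(S_nx)\bigr] = e^{i\xi n\gamma}\,P_\xi^n\bigl(e^{-i\xi u}\varphi\bigr)(x),
\end{equation*}
so the unknown $u$ is absorbed into the multiplier $e^{-i\xi u}\varphi$ of the target function.

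By linearity, Stone--Weierstrass, and the $O(1/\sqrt n)$ Berry--Esseen bound of Theorem~\ref{thm:BE-general} (used to absorb uniform approximation errors), it suffices to treat $\Phi(s,w) = \psi(s)\varphi(w)$ with $\varphi\in\Cc^\alpha(\P^{d-1})$ and $\psi$ a Schwartz function whose Fourier transform $\hat\psi$ is supported in a bounded interval $[-M,M]$. Fourier inversion $\psi(s) = (2\pi)^{-1}\int\hat\psi(\xi)e^{-i\xi s}\diff\xi$ then gives
\begin{equation*}
\mathbf E[\psi(t+Y_n)\varphi(S_n x)] = \frac{1}{2\pi}\int_{-M}^{M}\hat\psi(\xi)\,e^{-i\xi t}\,e^{i\xi n\gamma}\,P_\xi^n\bigl(e^{-i\xi u}\varphi\bigr)(x)\,\diff\xi.
\end{equation*}

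Fix a small $\delta>0$ and split this into $|\xi|\leq\delta$ and $\delta\leq|\xi|\leq M$. For the high-frequency range, the non-arithmeticity of $\sigma$ under proximality and strong irreducibility (Le Page, Guivarc'h--Raugi) provides a uniform spectral gap $\|P_\xi^n\|\leq C\rho^n$ on compact subsets of $\R\setminus\{0\}$; combined with the uniform boundedness of $\{e^{-i\xi u}\varphi\}_\xi$ in the admissible-function Banach space used to prove Theorem~\ref{thm:BE-general}, this contributes an exponentially small term, negligible after multiplication by $\sqrt n$. For $|\xi|\leq\delta$, the Kato--Rellich perturbation expansion gives $P_\xi^n = \lambda(\xi)^n N_\xi + R_\xi^n$ with $e^{i\xi n\gamma}\lambda(\xi)^n = \exp\bigl(-n\varrho^2\xi^2/2 + O(n|\xi|^3)\bigr)$, rank-one projector $N_\xi$ satisfying $N_0\phi = \int\phi\diff\nu$, and remainder $R_\xi^n$ of spectral radius uniformly less than $1$. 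After the change of variable $\xi = s/\sqrt n$, multiplication by $\sqrt{2\pi n}\,\varrho$, and dominated convergence with the Gaussian majorant $e^{-\varrho^2 s^2/4}$, this contribution converges, uniformly in $t\in\R$, to
\begin{equation*}
\frac{\varrho}{\sqrt{2\pi}}\,\hat\psi(0)\int\varphi\,\diff\nu\int_\R e^{-ist/\sqrt n}\,e^{-\varrho^2 s^2/2}\,\diff s = e^{-t^2/(2\varrho^2 n)}\int_\R\psi\,\diff s\int_{\P^{d-1}}\varphi\,\diff\nu,
\end{equation*}
which is exactly the claimed limit.

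The main obstacle is the uniform spectral control of $P_\xi^n(e^{-i\xi u}\varphi)$: when $u$ has logarithmic singularities (as in the coefficient case $u(w) = \log d(w,H_y)$), this function is not Hölder continuous, so one cannot work directly in $\Cc^\alpha$. The class $\Lc(\eta_*,\alpha_*,A_*)$ and the functional framework developed in \cite{DKW:BE-LLT-coeff} and refined for Theorem~\ref{thm:BE-general} are tailored precisely so that multiplication by $e^{-i\xi u}$ preserves a Banach space on which $P_\xi$ is quasi-compact and admits the perturbation expansion above, with estimates uniform for $\xi$ on compact sets. Once this uniform spectral package is in hand, the Fourier computations sketched here run routinely and yield Theorem~\ref{thm:LLT-general}.
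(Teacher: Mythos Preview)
Your Fourier-analytic outline is natural, and you correctly identify the crux: controlling $\oP_{i\xi}^n(e^{-i\xi u}\varphi)$ when $u$ is singular. But your resolution of this obstacle is a genuine gap. You assert that the class $\Lc(\eta_*,\alpha_*,A_*)$ and the framework of \cite{DKW:BE-LLT-coeff} provide a Banach space on which multiplication by $e^{-i\xi u}$ is bounded and $\oP_{i\xi}$ remains quasi-compact with uniform spectral estimates. No such space is constructed in the paper or in \cite{DKW:BE-LLT-coeff}. All spectral theory there is carried out on the ordinary H\"older space $\Cc^\alpha(\P^{d-1})$, and $e^{-i\xi u}$ simply does not belong to it when $u$ blows up. Building a space with properties (a)--(c) you implicitly require would be a substantial independent project, and nothing in the admissibility axioms suggests how to do it.

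The paper's actual mechanism is different and avoids $e^{-i\xi u}$ entirely. One introduces a partition of unity $(\chi_k)_{k\in\Z}$ on $\P^{d-1}$ adapted to the level sets $\{|u+k|<1\}$, with $\|\chi_k\|_{\Cc^\alpha}\lesssim e^{\alpha|k|}$ (Lemma~\ref{lemma:partition-of-unity}). On $\supp\chi_k$ one replaces $u(S_nx)$ by a constant $t_k\in[-k-1,-k+1]$, so the object fed to $\oP_{i\xi}^n$ is the genuinely H\"older function $\Phi_{n,\xi}=\sum_{|k|\leq A\log n}e^{i\xi t_k/\sqrt n}\chi_k\varphi$, plus a tail $\Phi_n^\star$ supported where $|u|$ is large. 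The growth $e^{\alpha|k|}$ of the $\chi_k$-norms is beaten by the decay $\nu\{|u|\geq k\}\lesssim e^{-\eta_* k}$ from axiom~(1) when one pairs with $\oN_0$, and by choosing $\alpha A\leq 1/6$ one keeps $\|\Phi_{n,\xi}\|_{\Cc^\alpha}\lesssim n^{1/6}$, harmless against the Gaussian factor. This discretisation of $u$ is the missing idea; once it is in place, your Fourier computation (with the upper/lower approximants $\psi_\delta^\pm$ of Lemma~\ref{lemma:conv-fourier-approx} rather than arbitrary Schwartz $\psi$) does run as you describe.
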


As before, the choices $u=0$ and $u(x) = \log d(x,H_y)$ yield the Local Limit Theorem (LLT) for the norm cocycle and the coefficients of $S_n$, respectively. Our result covers the known cases, corresponding to particular choices of $\Phi$, see  \cite{benoist-quint:book,grama-quint-xiao,DKW:BE-LLT-coeff,xiao-grama-liu:coeff}.  

The above theorems can be seen as mixed versions of the corresponding limit theorem for $\sigma(S_n,x) + u(S_n x)$ with the  equidistribution property of orbits $S_n x$ towards  the Furstenberg measure $\nu$ on $\P^{d-1}$, see the comments after Theorem \ref{thm:spectral-gap}. 

\medskip

Our methods also give a Local Limit Theorem with moderate deviations, that we discuss in  Section \ref{sec:LLT-D}.

\medskip

\noindent\textbf{Notations.} Throughout this article, the symbols $\lesssim$ and $\gtrsim$ stand for inequalities up to a multiplicative constant.  The dependence of these constants on certain parameters (or lack thereof),  if not explicitly stated,  will be clear from the context.  We denote by $\mathbf E$ the expectation and $\mathbf P$ the probability.

\section{Preliminary results} \label{sec:prelim}

This section contains some known results and the necessary background material for the proof of the main theorems. For the details, the read may refer to \cite{benoist-quint:book,bougerol-lacroix,DKW:BE-LLT-coeff,lepage:theoremes-limites}. We always assume  that $\mu$ has a finite exponential moment and  that $\Gamma_\mu$ is proximal and strongly irreducible.

Under our assumptions, $\mu$ admits a unique \textit{stationary measure}, also known as \textit{Furstenberg measure}. This is the unique probability measure $\nu$ on $\P^{d-1}$ satisfying $$\int_G g_* \nu \, \diff \mu(g)= \nu.$$

\subsection{Large deviation estimates and regularity of the stationary measure}

The following large deviation estimates will be used later. 

\begin{proposition}[\cite{benoist-quint:book}--Theorem 12.1]  \label{prop:BQLDT}
For any $\ep>0$ there exist $c>0$ and $n_0 \in\N$ such that, for all $n\geq n_0$ and $x\in \P^{d-1}$, one has 
	$$   \mu^{*n} \big\{g\in G:\, |\sigma(g,x)-n\gamma|  \geq n\ep \big\}\leq e^{-cn}         . $$
\end{proposition}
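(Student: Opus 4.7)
The plan is to derive this large deviation estimate from the spectral theory of the Markov operator $P_\mu$ and its complex perturbations, combined with a Chebyshev-type argument. First I would introduce the twisted transfer operators
\[
(P_z f)(x) := \int_G e^{z \sigma(g,x)} f(gx) \, \diff \mu(g), \quad z \in \C,
\]
so that, by the cocycle relation, $(P_z^n f)(x) = \int_G e^{z \sigma(g,x)} f(gx) \, \diff \mu^{*n}(g)$. Since $\mu$ has a finite exponential moment and $\Gamma_\mu$ is proximal and strongly irreducible, the unperturbed operator $P_0 = P_\mu$ admits a spectral gap on a suitable space of H\"older continuous functions on $\P^{d-1}$ (Le Page's theorem), with $1$ as a simple eigenvalue (eigenfunction $\mathbf{1}$, unique invariant state $\nu$). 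I would then invoke the standard analytic perturbation theory to obtain, for $z$ in a complex neighborhood of $0$, a simple leading eigenvalue $\lambda(z)$ depending holomorphically on $z$, with $\lambda(0)=1$, $(\log\lambda)'(0) = \gamma$ and $(\log\lambda)''(0) = \varrho^2$.

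Next I would extract from this the asymptotic $\mathbf E\!\left(e^{z\sigma(S_n,x)}\right) = \lambda(z)^n\,\bigl(h_z(x) + O(\theta^n)\bigr)$ for some $0<\theta<1$ and some uniformly bounded and bounded-below eigenfunction $h_z$, valid uniformly for $z$ real and sufficiently close to $0$. By Chebyshev's inequality, for any $z>0$ small,
\[
\mu^{*n}\bigl\{ \sigma(g,x) - n\gamma \geq n\ep \bigr\} \leq e^{-z n(\gamma+\ep)} \,\mathbf E\!\left(e^{z \sigma(S_n,x)}\right) \lesssim \exp\!\bigl(-n\bigl(z(\gamma+\ep) - \log \lambda(z)\bigr)\bigr).
\]
Using $\log\lambda(z) = z\gamma + O(z^2)$ as $z\to 0$, the exponent equals $z\ep - O(z^2)$, which is strictly positive for all sufficiently small $z>0$; this yields the required rate $c>0$ for the one-sided deviation. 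The symmetric bound for $\sigma(g,x)-n\gamma \leq -n\ep$ follows by the same argument applied with $z<0$, and adding the two bounds gives the claim for $n$ larger than some $n_0$ absorbing the boundedness of $h_z$.

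The main obstacle is the spectral-gap input. The estimate above is purely formal unless one knows that $P_z$ is a small holomorphic perturbation of $P_0$ on a Banach space where $P_0$ has $1$ as an isolated simple eigenvalue strictly dominating the rest of the spectrum. Establishing this requires the full Le Page/Guivarc'h machinery: a contracting cocycle estimate of the form $\mathbf E\bigl(d(g\cdot x,g\cdot y)^\alpha\bigr) \leq \kappa \, d(x,y)^\alpha$ for some $0<\alpha<1$ and $\kappa<1$ (which is where proximality and strong irreducibility enter crucially, via Furstenberg's positivity of the top Lyapunov exponent and the contraction properties on $\P^{d-1}$), together with a Doeblin--Fortet / Lasota--Yorke type inequality implying quasi-compactness of $P_\mu$ on the H\"older space. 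Once this is in place, analyticity in $z$ and smoothness of $\lambda(\cdot)$ near $0$ follow from Kato's perturbation theorem, and the Cram\'er argument above closes the proof. Since this machinery is developed in detail in \cite{benoist-quint:book, bougerol-lacroix}, I would cite it and only isolate the perturbative computation producing the rate $c$.
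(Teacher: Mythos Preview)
Your argument is correct and is the classical Le~Page route to large deviations: spectral gap for $\oP_0$ on a H\"older space, analytic perturbation to get a holomorphic leading eigenvalue $\lambda(z)$ with $(\log\lambda)'(0)=\gamma$, then the Cram\'er/Chebyshev bound. The paper, however, gives no proof at all of this proposition; it is simply quoted as Theorem~12.1 of \cite{benoist-quint:book}. So there is nothing to compare on the level of argument---your sketch supplies a proof where the paper only cites one. It is worth noting that the spectral machinery you invoke (Theorem~\ref{thm:spectral-gap} and Proposition~\ref{prop:spectral-decomp}) is already set up in Section~\ref{sec:prelim} of the paper, so your approach fits naturally into its framework and could be carried out with essentially no additional ingredients.
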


We equip $\P^{d-1}$ with the distance
\begin{equation*} 
d(x,w) : = \sqrt{1 - \bigg( \frac{\langle v_x,v_w \rangle}{\|v_x\| \|v_w\|} \bigg)^2}, \quad \text{where} \quad v_x,v_w \in \R^d \setminus \{0\}, \, x = [v_x], \,\, w = [v_w] \in \P^{d-1}.
\end{equation*}
Note that $d(x,w)$ is the sine of the angle between the lines $x$ and $w$ in $\R^d$ and  $(\P^{d-1}, d)$ has diameter one.

For a hyperplane $H$ in $\P^{d-1}$ and $r>0$,  we denote $\B(H,r) :=\big\{x \in \P^{d-1}: d(x,H) < r\big\}$, which is a ``tubular'' neighborhood  of $H$. Also, we denote by $(\P^{d-1})^*$ the projectivization of $(\R^d)^*$ and, for $y\in (\P^{d-1})^*$, we denote by $H_y$ the kernel of $y$, which is a (projective) hyperplane in $\P^{d-1}$.
The following is an important regularity property of the  stationary measure $\nu$.

\begin{proposition}[\cite{guivarch:1990}, \cite{benoist-quint:book}--Theorem 14.1]\label{prop:regularity}
There are constants $C>0$ and $\eta>0$ such that 
	$$\nu\big(\B(H_y,r)\big)\leq C r^\eta \quad\text{for every} \quad y\in (\P^{d-1})^* \, \, \text{ and } \,\, 0 \leq r \leq 1.$$
\end{proposition}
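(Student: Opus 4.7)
The plan is to deduce the pointwise measure bound from the stronger integral regularity estimate
$$M_\eta := \sup_{y\in (\P^{d-1})^*}\int_{\P^{d-1}} d(x, H_y)^{-\eta}\, d\nu(x) < \infty,$$
for some small $\eta > 0$. Once this is established, Chebyshev's inequality gives
$$\nu(\B(H_y,r)) \,=\, \nu\{x : d(x,H_y)^{-\eta} > r^{-\eta}\} \,\leq\, M_\eta\, r^\eta,$$
which is the conclusion with $C = M_\eta$.

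To establish finiteness of $M_\eta$, I would exploit the $\mu$-stationarity of $\nu$ together with the transformation rule
$$d(gx, H_y) \,=\, \frac{\|v\|\,\|g^t f\|}{\|gv\|\,\|f\|}\,d(x, H_{g^t\cdot y}), \qquad x=[v],\ y=[f],$$
where $g^t \cdot y := [g^t f]\in (\P^{d-1})^*$ denotes the dual projective action. Iterating this identity $n$ times, the quantity $\int d(x,H_y)^{-\eta}\,d\nu(x)$ is rewritten as
$$\mathbf E\int\left(\frac{\|S_n v\|\,\|f\|}{\|v\|\,\|S_n^t f\|}\right)^{\!\eta} d(x,H_{S_n^t\cdot y})^{-\eta}\,d\nu(x),$$
where $S_n^t = g_1^t\cdots g_n^t$ has the law of the transposed walk. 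The core idea is that proximality and strong irreducibility force the random direction $S_n^t\cdot y$ to contract onto a narrow region of $(\P^{d-1})^*$, decorrelating the singular integrand from $y$ for large $n$. Simultaneously, the norm cocycle prefactor is controlled using the large deviation bound of Proposition \ref{prop:BQLDT} applied to both $S_n$ and $S_n^t$, together with the finite exponential moment of $\mu$ (which ensures finiteness of the $\eta$-th exponential moment and gives a prefactor close to $1$ for small $\eta$).

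Combining these ingredients, I expect to obtain a recursive inequality of the form $M_\eta \leq A_n + B_n M_\eta$ with $A_n < \infty$ and $B_n \to 0$ as $n\to\infty$ for $\eta$ sufficiently small. Choosing $n$ large enough that $B_n < 1$ then forces $M_\eta < \infty$. The main obstacle, already present in the classical proof of Guivarch, is justifying the bootstrap, since one needs an a priori finite upper bound on $M_\eta$ before the recursion can be closed. I would handle this by first proving the estimate for the truncated quantity
$$M_\eta^{(N)} \,:=\, \sup_{y\in(\P^{d-1})^*}\int_{\P^{d-1}} \min\bigl(N,\, d(x,H_y)^{-\eta}\bigr)\, d\nu(x) \,\leq\, N,$$
with recursion constants $A_n, B_n$ independent of $N$, and then letting $N\to\infty$ to obtain a uniform bound $M_\eta \leq A_n/(1-B_n)$.
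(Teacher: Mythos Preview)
The paper does not prove Proposition~\ref{prop:regularity}; it is quoted as a known result with citations to Guivarc'h \cite{guivarch:1990} and Benoist--Quint \cite[Theorem 14.1]{benoist-quint:book}, and no argument is given in the text. So there is nothing to compare your proposal against within this paper.

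That said, your outline is essentially the classical Guivarc'h strategy: prove the stronger moment bound $\sup_y \int d(x,H_y)^{-\eta}\,\diff\nu(x)<\infty$ via stationarity, the cocycle identity for $d(gx,H_y)$, and a contraction/bootstrap argument, then conclude by Chebyshev. The transformation rule you wrote is correct, and the truncation device $M_\eta^{(N)}$ to close the recursion is the standard way to make the bootstrap rigorous. One point worth tightening: you will need that the transposed measure $\mu^t$ (the law of $g^t$) is again proximal and strongly irreducible with finite exponential moment, so that the large deviation estimates apply to the dual walk $S_n^t$ acting on $(\P^{d-1})^*$; this is true but should be stated. The phrase ``decorrelating the singular integrand from $y$'' is the heart of the matter and is where the real work lies---in the references it is handled either through the contraction of $S_n^t$ toward a random attracting hyperplane or through direct operator estimates---so if you were writing this out in full, that step would need to be made precise.
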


\subsection{The Markov operator and its perturbations} \label{subsec:markov-op}

The \textit{Markov operator} associated to $\mu$ is the operator 
$$\oP \varphi(x):=\int_{G} \varphi(gx) \,\diff\mu(g),$$
acting on functions on $\P^{d-1}$.
For $z\in\C$, we also consider the perturbation $\oP_z$ of $\oP$ (also called \textit{complex transfer operator}) given by
\begin{equation} \label{eq:markov-op-def}
\oP_z \varphi(x):=\int_{G} e^{z\sigma(g,x)}\varphi(gx) \,\diff\mu(g),
\end{equation}
where $\sigma(g,x)$ is the norm cocycle defined in \eqref{eq:cocycle-def}. Notice that $\oP_0= \oP$ and a direct computation using the cocycle relation $\sigma(g_2g_1,x) = \sigma(g_2,g_1  x) + \sigma(g_1,x)$ gives that
\begin{equation} \label{eq:markov-op-iterate}
\oP^n_z \varphi (x)   = \int_G e^{z \sigma(g,x)} \varphi(gx) \, \diff \mu^{* n} (g),
\end{equation}
where $\mu^{* n}$ is the convolution power of $\mu$, obtained by projecting the product measure $\mu^{\otimes n}$ on $G^n$ to $G$ via the map $(g_n,\ldots,g_1) \mapsto g_n \cdots g_1$.
\medskip

We now state some fundamental results of Le Page about the spectral properties of the above operators acting on some H\" older spaces. For $0<\alpha<1$, we denote by $\Cc^\alpha(\P^{d-1})$ the space of H\"older continuous functions on $\P^{d-1}$ equipped with the norm
\begin{equation*}
\|\varphi\|_{\Cc^\alpha} := \|\varphi\|_\infty + \sup_{x \neq y \in \P^{d-1}} \frac{|\varphi(x)-\varphi(y)|}{d(x,y)^\alpha}. 
\end{equation*}

 Recall that the essential spectrum of an operator is the subset of the spectrum obtained by removing its isolated points corresponding to eigenvalues of finite multiplicity.
The essential spectral radius $\rho_{\rm ess}$ is the radius of the smallest disc centred at the origin which contains the essential spectrum.

\begin{theorem}[\cite{lepage:theoremes-limites}, \cite{bougerol-lacroix}-V.2] \label{thm:spectral-gap} There exists an $0<\alpha_0 \leq 1$ such that, for all $0<\alpha \leq \alpha_0$, the operator $\oP$ acts continuously on $\Cc^\alpha(\P^{d-1})$ with a spectral gap. More precisely, the essential spectral radius of $\oP$ satisfies $\rho_{\rm ess}(\oP)<1$ and $\oP$ has a single eigenvalue of modulus $\geq 1$ located at $1$, which is isolated and of multiplicity one.
\end{theorem}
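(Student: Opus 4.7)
The plan is to deduce this from a Doeblin--Fortet (also called Lasota--Yorke) inequality for the iterates of $\oP$ on $\Cc^\alpha(\P^{d-1})$, combined with Hennion's quasi-compactness theorem, and then analyze the peripheral spectrum using uniqueness of the stationary measure $\nu$ together with proximality and strong irreducibility of $\Gamma_\mu$.

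First I would establish that $\oP$ acts continuously on $\Cc^\alpha(\P^{d-1})$ for $\alpha$ small enough. The bound $\|\oP\varphi\|_\infty \leq \|\varphi\|_\infty$ is trivial, while for the H\"older seminorm one has $|\oP\varphi(x)-\oP\varphi(y)| \leq [\varphi]_\alpha \int d(gx,gy)^\alpha \diff\mu(g)$. Using the standard Lipschitz estimate $d(gx,gy) \leq N(g)^2 \, d(x,y)$ on $\P^{d-1}$ and the finite exponential moment assumption, one picks $\alpha_0>0$ so small that $\int N(g)^{2\alpha_0} \diff\mu(g) < \infty$, which gives continuity for all $0<\alpha \leq \alpha_0$.

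The key step is Le Page's contraction inequality: for $\alpha>0$ sufficiently small, there exists $\lambda<1$ such that $\int d(gx,gy)^\alpha \diff\mu^{*n}(g) \leq C\lambda^n d(x,y)^\alpha$ uniformly in $x,y$. The intuition is that under proximality and strong irreducibility, the random action of $S_n$ on $\P^{d-1}$ contracts at a rate governed by the gap between the first two Lyapunov exponents; this gap is positive by Furstenberg's theorem. To obtain this estimate I would combine the cocycle identity $\sigma(g_2g_1,x) = \sigma(g_2,g_1x)+\sigma(g_1,x)$ with the elementary bound $\frac{d(gx,gy)}{d(x,y)} \leq e^{-\sigma(g,x)-\sigma(g,y)}\|g\|^2$ and then apply the large deviation estimate of Proposition \ref{prop:BQLDT} plus the finite exponential moment to control the resulting expectation. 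Integrating this inequality against $\varphi$ yields the Lasota--Yorke estimate
$$\|\oP^n\varphi\|_{\Cc^\alpha} \leq C\lambda^n \|\varphi\|_{\Cc^\alpha} + C_n\|\varphi\|_\infty.$$
Since the inclusion $\Cc^\alpha(\P^{d-1}) \hookrightarrow \Cc^0(\P^{d-1})$ is compact by Arzel\`a--Ascoli and $\|\oP^n\varphi\|_\infty \leq \|\varphi\|_\infty$, Hennion's theorem gives that $\oP$ is quasi-compact on $\Cc^\alpha(\P^{d-1})$ with $\rho_{\rm ess}(\oP) \leq \lambda < 1$.

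It remains to analyze the peripheral spectrum. Since $\oP \mathbf 1 = \mathbf 1$, the eigenvalue $1$ is present. To prove it is simple, suppose $\oP\varphi = \varphi$ with $\varphi \in \Cc^\alpha$; then $\varphi(x) = \int \varphi(gx)\,\diff\mu^{*n}(g)$ for every $n$, and the sequence $\varphi(S_nx)$ is a bounded martingale. By the equidistribution of $S_nx$ to $\nu$ (a consequence of unique ergodicity of $\nu$ under the assumptions), $\varphi$ must be constant $\nu$-a.e. and hence everywhere by continuity. For the absence of other peripheral eigenvalues: if $\oP\varphi = e^{i\theta}\varphi$ with $|\varphi|\not\equiv 0$, then $|\varphi|$ is $\oP$-subharmonic and the equality case in the triangle inequality forces $g\mapsto e^{-i\theta}\varphi(gx)/\varphi(x)$ to be $\mu$-a.s.\ constant for $\nu$-a.e.\ $x$; combined with the strong irreducibility and proximality of $\Gamma_\mu$, a standard argument (see \cite{bougerol-lacroix,benoist-quint:book}) excludes $e^{i\theta}\neq 1$.

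The main obstacle is the Le Page contraction inequality: proving it requires quantitative control over the distance between orbits $S_n x$ and $S_n y$, which deteriorates precisely near coincidence of these orbits with invariant hyperplanes; the large deviation estimate for $\sigma(S_n,\cdot)$ and a careful choice of small $\alpha$ are needed to absorb this singularity. Once this estimate is in place, the rest is a direct application of the quasi-compactness machinery and of the dynamical consequences of proximality and strong irreducibility already used in the existence and uniqueness of $\nu$.
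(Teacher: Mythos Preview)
The paper does not give its own proof of this theorem; it is quoted as a known result from Le~Page \cite{lepage:theoremes-limites} and Bougerol--Lacroix \cite{bougerol-lacroix}. Your outline is essentially the classical argument found in those references: a Doeblin--Fortet (Lasota--Yorke) inequality obtained from Le~Page's contraction estimate, followed by the Ionescu-Tulcea--Marinescu / Hennion quasi-compactness theorem, and finally an analysis of the peripheral spectrum via proximality and strong irreducibility. So there is nothing to compare against, and your sketch is along the correct standard lines.

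One point deserves care. You propose to derive the contraction inequality $\int d(gx,gy)^\alpha \,\diff\mu^{*n}(g) \leq C\lambda^n d(x,y)^\alpha$ by invoking the large deviation estimate of Proposition~\ref{prop:BQLDT}. In many treatments this large deviation bound is itself deduced from the spectral gap, so one must avoid circularity. In the present paper this is harmless, since Proposition~\ref{prop:BQLDT} is cited from \cite{benoist-quint:book}, where it is obtained by independent (martingale) methods. Still, Le~Page's original route is more self-contained: one shows directly that $\sup_{x\neq y}\int \big(d(gx,gy)/d(x,y)\big)^\alpha \diff\mu(g)<\infty$ for small $\alpha$ by the exponential moment, and that some iterate has this supremum $<1$ using only the positivity of the gap between the first two Lyapunov exponents (Furstenberg) together with submultiplicativity. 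Either way, once the Lasota--Yorke inequality is in hand, your treatment of quasi-compactness and of the peripheral spectrum is the standard one.
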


It follows directly from the above theorem that $\|\oP^n - \oN\|_{\Cc^\alpha} \leq C \tau^n$ for some constants $C > 0$ and $0<\tau<1$, where $\oN$ is the projection onto the space of constant functions defined by $\varphi \mapsto \big( \int_{\P^{d-1}} \varphi \,  \diff \nu \big) \cdot \mathbf 1$. 
The following result says that the spectral properties of $\oP_0$ persist for $\oP_z$ for $z$ near the origin and gives a useful decomposition of $\oP_z$,  see e.g.\ \cite[V.4]{bougerol-lacroix}.

\begin{proposition} \label{prop:spectral-decomp}
 There exist $0<\alpha_0 \leq 1$ and $b > 0$ such that for $|\Re z| < b$, the operators $\oP_z$ act continuously on $\Cc^\alpha(\P^{d-1})$ for all $0<\alpha \leq \alpha_0$. Moreover, the family of operators $z \mapsto \oP_z$ is analytic near $z=0$.
	
	In particular, there exists an $\epsilon_0 > 0$ such that, for $|z|\leq \epsilon_0$, one has a decomposition
	\begin{equation*} 
	\oP_z = \lambda_z \oN_z + \oQ_z,
	\end{equation*}
	where $\lambda_z \in \C$, $\oN_z$ and $\oQ_z$ are bounded operators on $\Cc^{\alpha}(\P^{d-1})$ and 
	\begin{enumerate}
		\item $\lambda_0 = 1$ and $\oN_0 \varphi = \int_{\P^{d-1}} \varphi \, \diff \nu$, which is a constant function,  where $\nu$ is the unique $\mu$-stationary measure;
		\item $\rho:= \displaystyle \lim_{n \to \infty
		} \|\oP_0^n - \oN_0\|_{\Cc^\alpha}^{1 \slash n} < 1$;
		
		\item $\lambda_z$ is the unique eigenvalue of maximum modulus of $\oP_z$, $\oN_z$ is a rank-one projection and $\oN_z \oQ_z = \oQ_z \oN_z = 0$;
		
		\item the maps $z \mapsto \lambda_z$,  $z \mapsto \oN_z$ and $z \mapsto \oQ_z$ are analytic;
		
		\item  $|\lambda_z| \geq \frac{2 + \rho}{3}$ and for every $k\in\N$, there exists a constant $c > 0$ such that $$\Big \| \frac{\diff^k \oQ_z^n}{\diff z^k} \Big \|_{\Cc^\alpha} \leq c \Big( \frac{1 + 2 \rho}{3} \Big)^n \quad \text{ for every}\quad n \geq 0;$$
		
		\item for $z=i\xi\in i\R$, we have
		$$   \lambda_{i\xi} = 1 + i  \gamma \xi - \frac{\varrho^2+\gamma^2}{2}\xi^2+O(|\xi|^3) \quad \text {as } \,\, \xi \to 0,$$ where $\gamma$ is the first Lyapunov exponent of $\mu$ and $\varrho^2 > 0$ is a constant.
	\end{enumerate}
\end{proposition}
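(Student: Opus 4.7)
The plan is to combine the spectral gap of $\oP_0$ from Theorem \ref{thm:spectral-gap} with Kato's analytic perturbation theory applied to the family $z \mapsto \oP_z$, and then identify the Taylor coefficients of $\lambda_{i\xi}$ at $\xi=0$ via the first two moments of the norm cocycle.

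First, I would establish that $z \mapsto \oP_z$ is a well-defined analytic family of bounded operators on $\Cc^{\alpha}(\P^{d-1})$ in a neighborhood of $z=0$. Since $|\sigma(g,x)| \leq \log N(g)$ and $\sigma(g,\cdot)$ is $\alpha$-H\"older on $\P^{d-1}$ with a constant controlled by a power of $N(g)$, the finite exponential moment hypothesis $\int N(g)^{\varepsilon}\,\diff\mu(g) < \infty$ makes the integrand in \eqref{eq:markov-op-def} absolutely integrable in $\Cc^\alpha(\P^{d-1})$ for $|\Re z|$ smaller than some $b>0$, provided $\alpha$ is chosen small enough (at most $\alpha_0$ from Theorem \ref{thm:spectral-gap}). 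Analyticity in $z$ then follows by differentiating under the integral sign, justified by the same moment bound.

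Next, I apply the analytic perturbation theorem for isolated eigenvalues. By Theorem \ref{thm:spectral-gap}, the eigenvalue $1$ of $\oP_0$ is simple, isolated in the spectrum, and separated from the rest by a positive gap. Choose a small circle $\Gamma$ around $1$ that contains no other spectrum of $\oP_0$. For $z$ small, $\Gamma$ still lies in the resolvent set of $\oP_z$, so the Riesz spectral projection $\oN_z := \frac{1}{2\pi i}\int_\Gamma (\zeta - \oP_z)^{-1} \diff\zeta$ is analytic in $z$, rank-one (by continuity of the rank of an analytic projection), and $\lambda_z := \mathrm{tr}(\oP_z \oN_z)$ is the analytic eigenvalue branch with $\lambda_0 = 1$. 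Setting $\oQ_z := \oP_z(I - \oN_z)$ gives the decomposition $\oP_z = \lambda_z \oN_z + \oQ_z$ with $\oN_z \oQ_z = \oQ_z \oN_z = 0$, and $\oN_0 \varphi = \int \varphi\,\diff\nu$ because $\nu$ is the dual eigenvector of $\oP_0$ at $1$. The quantitative bounds in (5) follow by continuity at $z=0$: choose $\epsilon_0$ small so that $|\lambda_z - 1| < (1-\rho)/3$ and the spectral radius of $\oQ_z$ stays below $(1+2\rho)/3$; the bounds on $\diff^k \oQ_z^n/\diff z^k$ then come from the Cauchy integral formula applied to the analytic map $z \mapsto \oQ_z^n$ on a disc of fixed radius in $\epsilon_0$.

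Finally, for the expansion in (6), I use the identity $\oP_{i\xi}^n \mathbf{1}(x) = \mathbf E\big(e^{i\xi \sigma(S_n,x)}\big)$ coming from \eqref{eq:markov-op-iterate}. Computing the first derivative of $\lambda_z$ at $z=0$ via the eigenvector equation $\oP_z h_z = \lambda_z h_z$ with $h_0 = \mathbf 1$ and $\int h_z\,\diff\nu = 1$ yields, after pairing with $\nu$,
$$\lambda_0' = \int_{\P^{d-1}}\int_G \sigma(g,x)\,\diff\mu(g)\,\diff\nu(x) = \gamma,$$
the Furstenberg formula. For the second derivative, the spectral decomposition gives $\mathbf E\big(e^{i\xi(\sigma(S_n,x)-n\gamma)/\sqrt n}\big) = \lambda_{i\xi/\sqrt n}^n e^{-i\sqrt n \gamma \xi}\, h_{i\xi/\sqrt n}(x) + O(\rho_1^n)$ for some $\rho_1 < 1$; the Central Limit Theorem \eqref{eq:CLT-intro} forces the left side to converge to $e^{-\varrho^2 \xi^2/2}$, which pins down the quadratic coefficient of $\lambda_{i\xi}$ to be $-(\varrho^2+\gamma^2)/2$, with $\varrho^2 > 0$ because the cocycle is non-degenerate under proximality and strong irreducibility.

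The main obstacle is the first step: verifying uniform H\"older continuity and analyticity of $\oP_z$ on $\Cc^\alpha(\P^{d-1})$, which is where the exponential moment assumption is essential and where the parameter $\alpha$ may have to be reduced. The rest is a relatively routine application of perturbation theory, modulo the classical identification of $\varrho^2$ via the CLT in the last step.
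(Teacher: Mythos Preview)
The paper does not supply its own proof of this proposition; it is quoted from the literature with the reference ``see e.g.\ \cite[V.4]{bougerol-lacroix}''. Your outline --- boundedness and analyticity of $z\mapsto\oP_z$ from the exponential moment, followed by Kato--Riesz analytic perturbation theory around the simple isolated eigenvalue $1$ of $\oP_0$ --- is exactly the standard argument found there, so the overall strategy is correct and matches the cited source.

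One logical point deserves care. In your last paragraph you invoke the CLT \eqref{eq:CLT-intro} to pin down the quadratic coefficient of $\lambda_{i\xi}$. In the reference \cite{bougerol-lacroix} (and in Le~Page's original work) the dependence runs the other way: one \emph{defines} $\varrho^2 := -\Lambda''(0)$ where $\Lambda(s)=\log\lambda_s$, proves $\varrho^2>0$ directly from the fact that under proximality and strong irreducibility the cocycle $\sigma$ is not cohomologous to a constant, and only then deduces the CLT from the expansion. Using the CLT as input here is not strictly circular, since Benoist--Quint \cite{benoist-quint:CLT} give an independent martingale proof, but it obscures the self-contained nature of the spectral argument and is not how the cited reference proceeds. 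The cleaner route is to differentiate the eigenvalue equation twice and pair with $\nu$ to get $\lambda_0'' = \int \sigma(g,x)^2\,\diff\mu\,\diff\nu + 2\int\sigma(g,x)h_0'(gx)\,\diff\mu\,\diff\nu$, then argue positivity of $\varrho^2$ separately.
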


The above constant  $\varrho^2 > 0$ coincides with the variance in the Central Limit Theorem for the norm cocycle \eqref{eq:CLT-intro}, see \cite{bougerol-lacroix,benoist-quint:book,DKW:LLT}. As a consequence, we have the following estimates, see \cite[Lemma 4.9]{DKW:LLT} and \cite[Lemma 9]{lepage:theoremes-limites}.
\begin{lemma}\label{lemma:lambda-estimates}
	Let $\ep_0$ be as in Proposition \ref{prop:spectral-decomp}. There exists $0 < \xi_0 < \ep_0$ such that, for all $n \in \N$ large enough, one has $$\big|\lambda_{{i\xi\over \sqrt n}}^n\big|\leq e^{-{\varrho^2\xi^2\over 3}} \quad\text{for}\quad  |\xi|\leq \xi_0\sqrt n,$$
	$$\Big|  e^{-i\xi\sqrt n \gamma}\lambda_{{i\xi\over \sqrt n}}^n-e ^{-{\varrho^2\xi^2\over 2}}  \Big|\leq {c\over \sqrt n}|\xi|^3e^{-{\varrho^2\xi^2\over 2}} \quad\text{for}\quad |\xi|\leq \sqrt[6] n,$$  
	$$\Big| e^{-i\xi\sqrt n \gamma} \lambda_{{i\xi\over \sqrt n}}^n-e ^{-{\varrho^2\xi^2\over 2}}  \Big|\leq {c\over \sqrt n}e^{-{\varrho^2\xi^2\over 4}} \quad\text{for}\quad \sqrt[6] n<|\xi|\leq \xi_0\sqrt n,$$  
	where  $c>0$ is a constant independent of $n$.
\end{lemma}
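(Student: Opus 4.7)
The plan is to derive all three estimates from a single asymptotic expansion of $\log\lambda_{i\eta}$ near $\eta=0$, obtained from Proposition \ref{prop:spectral-decomp}(6).

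\medskip

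\noindent\textbf{Step 1 (expanding the logarithm).} Since $\lambda_0=1$ and $\lambda_z$ is analytic near $0$, for $|\eta|$ small one may take logarithms. Starting from
$$\lambda_{i\eta} = 1 + i\gamma\eta - \frac{\varrho^2+\gamma^2}{2}\eta^2 + O(|\eta|^3),$$
I expand $\log(1+w)=w-w^2/2+O(|w|^3)$ and collect terms. The quadratic contribution from $\tfrac12(i\gamma\eta)^2=-\tfrac12\gamma^2\eta^2$ exactly cancels the $\gamma^2/2$ piece, yielding
$$\log\lambda_{i\eta}=i\gamma\eta-\frac{\varrho^2}{2}\eta^2+O(|\eta|^3)$$
for $|\eta|\leq\eta_1$, where $\eta_1\leq\epsilon_0$ is some fixed radius. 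Substituting $\eta=\xi/\sqrt n$ and multiplying by $n$ gives, uniformly for $|\xi|\leq\eta_1\sqrt n$,
$$n\log\lambda_{i\xi/\sqrt n}-i\gamma\xi\sqrt n=-\frac{\varrho^2}{2}\xi^2+R_n(\xi),\qquad |R_n(\xi)|\leq C\frac{|\xi|^3}{\sqrt n}.$$

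\medskip

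\noindent\textbf{Step 2 (choosing $\xi_0$ and proving the first bound).} Taking real parts in the identity above,
$$\Re\bigl(n\log\lambda_{i\xi/\sqrt n}\bigr)\leq -\frac{\varrho^2}{2}\xi^2+C\frac{|\xi|^3}{\sqrt n}\leq \Big(-\frac{\varrho^2}{2}+C\,\frac{|\xi|}{\sqrt n}\Big)\xi^2.$$
I pick $\xi_0\in(0,\eta_1)$ small enough that $C\xi_0\leq \varrho^2/6$. Then for $|\xi|\leq \xi_0\sqrt n$ the bracket is $\leq -\varrho^2/3$, proving $|\lambda_{i\xi/\sqrt n}^n|\leq e^{-\varrho^2\xi^2/3}$.

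\medskip

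\noindent\textbf{Step 3 (the sharpened bound on $|\xi|\leq n^{1/6}$).} Exponentiating the expansion of Step 1,
$$e^{-i\gamma\xi\sqrt n}\lambda_{i\xi/\sqrt n}^n=e^{-\varrho^2\xi^2/2}\,e^{R_n(\xi)}.$$
For $|\xi|\leq n^{1/6}$ we have $|R_n(\xi)|\leq C|\xi|^3/\sqrt n\leq C$, so $e^{R_n(\xi)}-1=R_n(\xi)+O(|R_n(\xi)|^2)=O(|\xi|^3/\sqrt n)$. Multiplying by $e^{-\varrho^2\xi^2/2}$ yields the desired bound with a constant independent of $n$.

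\medskip

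\noindent\textbf{Step 4 (the range $n^{1/6}<|\xi|\leq\xi_0\sqrt n$).} Here I only need a crude bound, combining Step 2 with the trivial estimate on the Gaussian term:
$$\bigl|e^{-i\gamma\xi\sqrt n}\lambda_{i\xi/\sqrt n}^n-e^{-\varrho^2\xi^2/2}\bigr|\leq e^{-\varrho^2\xi^2/3}+e^{-\varrho^2\xi^2/2}\leq 2e^{-\varrho^2\xi^2/3}.$$
Writing $e^{-\varrho^2\xi^2/3}=e^{-\varrho^2\xi^2/4}\cdot e^{-\varrho^2\xi^2/12}$ and using $|\xi|>n^{1/6}$, the last factor is bounded by $e^{-\varrho^2 n^{1/3}/12}$, which decays faster than any negative power of $n$ and is therefore $\leq c/\sqrt n$ for a suitable $c$. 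This proves the third estimate.

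\medskip

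The analysis is essentially routine once Step 1 is in place; the only mildly delicate point is the exact cancellation producing $-\varrho^2\xi^2/2$ (rather than $-(\varrho^2+\gamma^2)\xi^2/2$) in the logarithmic expansion, which is what makes $\varrho^2$ the genuine CLT variance and lines up the quadratic term with $e^{-\varrho^2\xi^2/2}$.
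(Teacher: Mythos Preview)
Your proof is correct and follows the standard route: the paper does not prove this lemma in-line but cites \cite[Lemma 4.9]{DKW:LLT} and \cite[Lemma 9]{lepage:theoremes-limites}, and the argument in those references is precisely the logarithmic expansion of $\lambda_{i\eta}$ that you carry out in Step~1, followed by the same case analysis. The cancellation of the $\gamma^2$ term you highlight is indeed the key algebraic point, and your handling of the three ranges matches the classical treatment.
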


The following important result describes the spectrum of $\oP_{i\xi}$ for large real values of $\xi$. It will be used in the proof of Theorem \ref{thm:LLT-general}.

\begin{proposition}[\cite{lepage:theoremes-limites}, \cite{benoist-quint:book}-Chapter 15] \label{prop:spec-Pxi}
Let $\mu$ be as in Theorem \ref{thm:spectral-gap} and $0<\alpha_0 \leq 1$ be a small constant. Let $K$ be a compact subset of $\R \setminus \{0\}$. Then, for every $0<\alpha \leq \alpha_0$ there exist constants $C_K>0$ and $0<\rho_K<1$ such that $\norm{\oP^n_{i\xi}}_{\Cc^\alpha}\leq C_K \rho_K^n$ for all $n\geq 1$ and $\xi\in K$.
\end{proposition}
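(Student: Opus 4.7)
The plan is to combine quasi-compactness of $\oP_{i\xi}$ on $\Cc^\alpha(\P^{d-1})$ with the absence of peripheral spectrum for $\xi \neq 0$, then promote pointwise spectral radius bounds to a uniform estimate over the compact set $K$.

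First I would establish a Doeblin-Fortet/Lasota-Yorke inequality, uniform in $\xi \in K$: there exist $n_1 \geq 1$ and $0 < r < 1$, depending only on $K$, such that
\[ [\oP^{n_1}_{i\xi} \varphi]_\alpha \leq r\,[\varphi]_\alpha + M_K \|\varphi\|_\infty \quad \text{for all } \xi \in K, \]
where $[\cdot]_\alpha$ denotes the $\alpha$-Hölder seminorm. The contraction of the seminorm comes from the almost sure contraction of $S_n$ on $\P^{d-1}$ (e.g.\ via large deviation bounds for the expansion coefficients), combined with the finite exponential moment used to absorb the oscillatory factor $e^{i\xi\sigma(g,\cdot)}$ and the Hölder regularity of $\sigma(g,\cdot)$ on $\P^{d-1}$. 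By the Ionescu-Tulcea--Marinescu/Hennion theorem, this implies that $\oP_{i\xi}$ is quasi-compact on $\Cc^\alpha(\P^{d-1})$ with essential spectral radius at most $r$, uniformly in $\xi \in K$.

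Next I would show that $\oP_{i\xi}$ has no eigenvalue of modulus $\geq 1$ when $\xi \neq 0$. Suppose to the contrary that $\oP_{i\xi}\varphi = e^{i\theta}\varphi$ for some nonzero $\varphi \in \Cc^\alpha(\P^{d-1})$. Then $\oP|\varphi| \geq |\varphi|$, and integrating against the stationary measure $\nu$ forces equality, so $|\varphi|$ is $\oP$-invariant; the spectral gap from Theorem \ref{thm:spectral-gap} then forces $|\varphi|$ to be a nonzero constant. Writing $\varphi = e^{ih}$, the eigenvalue equation becomes the cohomological identity
\[ \xi\,\sigma(g,x) + h(gx) \equiv \theta + h(x) \pmod{2\pi} \]
for $\mu \otimes \nu$-a.e.\ $(g,x)$. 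Iterating and specializing to proximal elements $g \in \Gamma_\mu$, whose attracting fixed points lie in $\supp \nu$, pins down $\xi\log|\lambda_g|$ modulo $2\pi$ in terms of $\theta$ and the values of $h$ at these points. The abundance of independent proximal elements furnished by strong irreducibility then produces incommensurable arithmetic relations, yielding a contradiction. This is the non-arithmeticity argument of Guivarc'h-Raugi and Le Page, cf.\ \cite{bougerol-lacroix, benoist-quint:book}.

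The main obstacle is precisely this non-arithmeticity step: it is where the combined assumptions of proximality and strong irreducibility of $\Gamma_\mu$ are indispensable, and its proof requires careful manipulation of the cohomological equation along orbits of several independent proximal elements. To finish, since $\xi \mapsto \oP_{i\xi}$ is continuous in operator norm on $\Cc^\alpha(\P^{d-1})$ by dominated convergence and the exponential moment, the spectrum varies upper semi-continuously in $\xi$, and outside the uniform essential part the isolated eigenvalues move continuously. Combined with compactness of $K$, this gives $\rho_K' := \sup_{\xi \in K}\rho(\oP_{i\xi}) < 1$. Choosing any $\rho_K \in (\rho_K', 1)$ and applying the spectral radius formula together with the uniform Lasota-Yorke inequality yields a constant $C_K > 0$ such that $\|\oP^n_{i\xi}\|_{\Cc^\alpha} \leq C_K \rho_K^n$ for all $n \geq 1$ and $\xi \in K$, as claimed.
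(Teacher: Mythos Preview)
The paper does not give its own proof of this proposition; it is stated as a known result with references to Le Page \cite{lepage:theoremes-limites} and Benoist--Quint \cite[Chapter 15]{benoist-quint:book}, and is simply used as a black box in the proof of the Local Limit Theorem. Your sketch is essentially the argument carried out in those references: a uniform Lasota--Yorke inequality giving quasi-compactness, the non-arithmeticity argument ruling out peripheral eigenvalues for $\xi\neq 0$, and a compactness/perturbation step to pass to a uniform bound over $K$. So your approach is correct and matches the cited literature.

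One small caution on the uniformity step: the spectral radius is in general only lower semi-continuous in operator norm, so the sentence ``the spectrum varies upper semi-continuously in $\xi$'' needs the quasi-compactness in an essential way. The clean justification is that your uniform Lasota--Yorke bound pins the essential spectral radius at or below some $r<1$ for all $\xi\in K$, and then standard perturbation theory for isolated eigenvalues of finite multiplicity (Kato, or the argument in \cite[V]{bougerol-lacroix}) shows that the finitely many eigenvalues in the annulus $\{r<|z|\leq 1\}$ vary continuously with $\xi$; since none of them touches the unit circle for any $\xi\in K$, compactness of $K$ gives $\sup_{\xi\in K}\rho(\oP_{i\xi})<1$. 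You have this implicitly, but it is worth stating explicitly since it is where the argument can go wrong if one appeals to semi-continuity too casually.
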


\subsection{Fourier transform and characteristic functions}

Recall that the Fourier transform of an integrable function $f$, denoted by $\widehat f$, is defined by
$$\widehat f(\xi):=\int_{-\infty}^{\infty}f(t)e^{-i t\xi} \diff t$$ 
and the inverse Fourier transform is 
$$f(t)={1\over {2\pi}}\int_{-\infty}^{\infty} \widehat f (\xi) e^{ i t\xi} \diff \xi.$$
Note that the Fourier transform of $\widehat f(\xi)$ is $2\pi f(-t)$ and the Fourier transform exchanges the convolution and pointwise product: $\widehat{f_1*f_2}=\widehat f_1\cdot \widehat f_2$.

\begin{lemma}[\cite{DKW:LLT}--Lemma 2.2] \label{lemma:vartheta}
	There exists a smooth strictly positive even function $\vartheta$ on $\R$ with $\int_\R \vartheta(t) \diff t=1$ such that its Fourier transform $\widehat\vartheta$ is  a smooth even function supported by $[-1,1]$.
	Moreover, for $0<\delta \leq 1$ and $\vartheta_\delta(t):=\delta^{-2}\vartheta(t/\delta^2)$, we have that $\widehat{\vartheta_\delta}$ is supported by $[-\delta^{-2},\delta^{-2}]$, $|\widehat{\vartheta_\delta}|\leq 1, \norm{\widehat{\vartheta_\delta}}_{\Cc^1}\leq c$ and $\int_{|t|\geq \delta} \vartheta_\delta (t)\,\diff t\leq c\delta^2$ for some constant $c>0$ independent of $\delta$.
\end{lemma}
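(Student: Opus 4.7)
The plan is to construct $\vartheta$ as an inverse Fourier transform of a smooth even bump and then verify the scaling estimates, which follow directly from the Fourier scaling rule.

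For the construction, I would start with a smooth non-negative even bump $\chi$ on $\R$ with support in $[-1/2,1/2]$ and $\int\chi>0$. Setting $\widehat\vartheta := c\,(\chi*\chi)$ for a normalisation constant $c$ chosen so that $\int\vartheta=1$, the Fourier inversion formula yields $\vartheta(t) = c'\,\widehat\chi(t)^2$ for a positive constant $c'$, which is smooth, even and non-negative; moreover $\widehat\vartheta$ is smooth, even, and supported in $[-1,1]$. By Paley-Wiener, $\widehat\chi$ is entire of exponential type $1/2$, so its real zero set is discrete. To enforce strict positivity, I would instead set $\vartheta := c_1\widehat{\chi_1}^2 + c_2\widehat{\chi_2}^2$, with $\chi_1,\chi_2$ two smooth non-negative even bumps supported in $[-1/2,1/2]$ chosen so that the (discrete) real zero sets of $\widehat{\chi_1}$ and $\widehat{\chi_2}$ are disjoint. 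A convenient choice is $\chi_2(\xi) := \lambda^{-1}\chi_1(\xi/\lambda)$ for a suitable $\lambda<1$, since then $\widehat{\chi_2}(t) = \widehat{\chi_1}(\lambda t)$ has zero set $\lambda^{-1}\{t_k\}$, disjoint from $\{t_k\}$ for generic $\lambda$. The resulting $\vartheta$ is smooth, even, strictly positive, with $\int\vartheta=1$ after normalisation, and its Fourier transform equals $2\pi(c_1\chi_1*\chi_1 + c_2\chi_2*\chi_2)$, which is smooth, even, and supported in $[-1,1]$.

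For the scaling properties, the Fourier transform of $\vartheta_\delta(t) = \delta^{-2}\vartheta(t/\delta^2)$ is $\widehat{\vartheta_\delta}(\xi) = \widehat\vartheta(\delta^2\xi)$, which is automatically supported in $[-\delta^{-2},\delta^{-2}]$. Since $\vartheta\geq 0$, $\widehat\vartheta$ attains its supremum at $0$, so $|\widehat{\vartheta_\delta}|\leq \widehat\vartheta(0) = \int\vartheta = 1$. Differentiating gives $(\widehat{\vartheta_\delta})'(\xi) = \delta^2\widehat\vartheta'(\delta^2\xi)$, uniformly bounded by $\|\widehat\vartheta'\|_\infty$ for $\delta\leq 1$, hence $\|\widehat{\vartheta_\delta}\|_{\Cc^1}\leq 1 + \|\widehat\vartheta'\|_\infty =: c$. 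Finally, the change of variable $s=t/\delta^2$ gives $\int_{|t|\geq\delta}\vartheta_\delta(t)\,\diff t = \int_{|s|\geq 1/\delta}\vartheta(s)\,\diff s$; since $\widehat\vartheta$ is smooth with compact support, $\vartheta$ is Schwartz, so $\vartheta(s) = O(|s|^{-3})$ yields an integral of order $\delta^2$.

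The main obstacle is arranging the strict positivity of $\vartheta$ while preserving the Fourier support constraint: the basic convolution-square construction delivers only non-negativity, since $\widehat\chi$ typically has real zeros. Resolving this requires either the sum-of-squares trick above, with care in choosing the pair $(\chi_1,\chi_2)$ so that $\widehat{\chi_1}$ and $\widehat{\chi_2}$ have no common real zero, or a small perturbation of the initial bump. Once $\vartheta$ is fixed, the scaling-related assertions are immediate from the Fourier identities and the Schwartz decay of bandlimited functions.
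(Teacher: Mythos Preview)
The paper does not supply its own proof of this lemma; it is quoted from \cite{DKW:LLT} and used as a black box. Your argument is correct and self-contained: the convolution-square on the Fourier side produces a non-negative bandlimited function, and the sum-of-two-squares device with a dilation $\lambda\in(0,1)$ chosen outside the countable set $\{t_k/t_j\}$ separates the discrete real zero sets and secures strict positivity. The scaling assertions follow exactly as you write, from $\widehat{\vartheta_\delta}(\xi)=\widehat\vartheta(\delta^2\xi)$ and the Schwartz decay of $\vartheta$. There is nothing in the present paper to compare against.
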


We will need the following elementary lemma.

\begin{lemma}\label{lemma:convolution-lip}
Let $0<\delta \leq 1$ and $\psi$ be a Lipschitz function on $\R$ with Lipschitz norm bounded by $1$. Then there exists a constant $C>0$ independent of $\psi$ and $\delta$ such that 
$$ \big|\psi* \vartheta_\delta (t)- \psi(t)\big|\leq C \delta^2  \quad\text{for all}\quad t\in\R.       $$
\end{lemma}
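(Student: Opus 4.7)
The plan is to reduce the estimate to a direct $L^1$-bound on the mollifier weighted by $|s|$, using only the Lipschitz property of $\psi$ and the fact that $\vartheta$ is a probability density decaying rapidly at infinity.

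First I would use $\int_\R \vartheta_\delta(s)\,\diff s = 1$ (which follows from $\int_\R \vartheta(t)\,\diff t = 1$ and the scaling $\vartheta_\delta(t) = \delta^{-2}\vartheta(t/\delta^2)$) to write
$$
\psi * \vartheta_\delta(t) - \psi(t) \;=\; \int_\R \bigl(\psi(t-s) - \psi(t)\bigr)\, \vartheta_\delta(s)\, \diff s.
$$
Applying the Lipschitz hypothesis $|\psi(t-s) - \psi(t)| \leq |s|$ and taking absolute values inside the integral gives
$$
\bigl|\psi * \vartheta_\delta(t) - \psi(t)\bigr| \;\leq\; \int_\R |s|\,\vartheta_\delta(s)\, \diff s.
$$

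Next I would perform the change of variable $s = \delta^2 u$, which yields
$$
\int_\R |s|\,\vartheta_\delta(s)\, \diff s \;=\; \delta^2 \int_\R |u|\,\vartheta(u)\, \diff u.
$$
The integral on the right is a finite constant depending only on $\vartheta$: indeed, by Lemma \ref{lemma:vartheta}, $\vartheta$ is the inverse Fourier transform of a smooth function supported on $[-1,1]$, hence $\vartheta$ is Schwartz, and in particular $|u|\,\vartheta(u)$ is integrable on $\R$. Setting $C := \int_\R |u|\,\vartheta(u)\,\diff u$ gives the desired bound, independent of $t$, $\psi$, and $\delta$.

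There is no real obstacle here; the only point worth noting is that the $\delta^2$ rate (rather than $\delta$) comes from the specific scaling $\vartheta_\delta(t) = \delta^{-2}\vartheta(t/\delta^2)$ used in Lemma \ref{lemma:vartheta}, so the effective ``width'' of the mollifier is $\delta^2$. The evenness of $\vartheta$ is not needed for this first-order estimate; it would only be useful if one wanted to exploit a cancellation to get a better rate under higher regularity of $\psi$, which is not the situation here.
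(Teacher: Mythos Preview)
Your proof is correct and in fact slightly cleaner than the paper's. The paper proceeds by the same opening identity but then splits the integral into the regions $|s|\leq\delta$ and $|s|\geq\delta$: on the near region it uses the Lipschitz bound $|\psi(t-s)-\psi(t)|\leq|s|$, while on the far region it uses $|\psi(t-s)-\psi(t)|\leq 2$ together with the tail estimate $\int_{|s|\geq\delta}\vartheta_\delta(s)\,\diff s\lesssim\delta^2$ from Lemma~\ref{lemma:vartheta}. Your argument bypasses the split entirely by applying the Lipschitz bound on all of $\R$ and changing variables; this is shorter and has the minor advantage of not invoking any $L^\infty$ bound on $\psi$ (the paper's far-region step implicitly reads ``Lipschitz norm'' as including $\|\psi\|_\infty$). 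Either way the $\delta^2$ rate comes from the scaling of $\vartheta_\delta$, as you correctly point out.
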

\proof
By definition and  Lemma \ref{lemma:vartheta}, $\big|\psi* \vartheta_\delta (t)- \psi(t)\big|$ is equal to
\begin{align*}
&\Big| \int \psi(t-s) \vartheta_\delta (s) \, \diff s-\int \psi(t) \vartheta_\delta(s)  \,\diff s \Big| =\Big| \int \big( \psi(t-s)-\psi(t)  \big) \vartheta_\delta (s) \,\diff s  \Big| \\
&\leq \Big| \int_{|s|\leq \delta} \big( \psi(t-s)-\psi(t)  \big) \vartheta_\delta (s) \,\diff s  \Big|+\Big| \int_{|s|\geq \delta} \big( \psi(t-s)-\psi(t)  \big) \vartheta_\delta (s) \,\diff s  \Big|\\
&\leq \int_{|s|\leq \delta} |s|\vartheta_\delta(s)\,\diff s +  \int_{|s|\geq \delta} 2\vartheta_\delta (s) \,\diff s =\delta^2\int_{|s|\leq \delta^{-1}}|s|\vartheta(s)\,\diff s+  \int_{|s|\geq \delta} 2\vartheta_\delta (s) \,\diff s \lesssim \delta^2,
\end{align*}
where in the last line we have used the change of variables $s\mapsto \delta^{-2} s$ and the fact that $|s|\vartheta(s)$ is integrable since $\vartheta$ has fast decay at $\pm \infty$. The proof of the lemma is finished.
\endproof

The following approximation result will be needed in the proof of Theorem \ref{thm:LLT-general}.

\begin{lemma}[\cite{DKW:LLT}--Lemma 2.4] \label{lemma:conv-fourier-approx}
	Let $\psi$ be a continuous real-valued function with support in a compact set $K$ in $\R$.  Assume that $\|\psi\|_\infty \leq 1$. Then, for every  $0< \delta \leq 1$ there exist  smooth functions $\psi^\pm_\delta$  such that $\widehat {\psi^\pm_\delta}$ have support in $[-\delta^{-2},\delta^{-2}]$,  $$\psi^-_\delta \leq\psi\leq \psi^+_\delta,\quad \lim_{\delta \to 0} \psi^\pm_\delta =\psi    \quad \text{and} \quad  \lim_{\delta \to 0} \big \|\psi^\pm_\delta -\psi \big \|_{L^1} = 0.$$ 
	Moreover,  $\norm{\psi_\delta^\pm}_\infty$, $\norm{\psi_\delta^\pm}_{L^1}$ and $\|\widehat{\psi^\pm_\delta}\|_{\Cc^1}$ are bounded by a constant which only depends on $K$.
\end{lemma}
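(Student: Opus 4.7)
The plan is to construct $\psi^\pm_\delta$ as mollifications of $\psi$ perturbed by a small multiple of an auxiliary cutoff function, designed so that the perturbation absorbs the mollification error.

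First, since $\psi$ is continuous with compact support in $K$ it is uniformly continuous; let $\omega_\psi(r) := \sup_{|s-t|\leq r} |\psi(s) - \psi(t)|$ denote its modulus of continuity, so $\omega_\psi(r) \to 0$ as $r \to 0$. Repeating the split-integral argument of Lemma \ref{lemma:convolution-lip} with $\omega_\psi$ in place of the Lipschitz constant and using the tail bound $\int_{|s| \geq \delta} \vartheta_\delta \leq c\delta^2$ from Lemma \ref{lemma:vartheta}, one obtains the uniform estimate
\[
\bigl| \psi * \vartheta_\delta(t) - \psi(t) \bigr| \leq \omega_\psi(\delta) + 2\|\psi\|_\infty c \delta^2 =: \eta_\delta \quad \text{for all } t \in \R,
\]
with $\eta_\delta \to 0$ as $\delta \to 0$. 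Now fix once and for all a smooth compactly supported function $\phi \colon \R \to [0,1]$, independent of $\delta$, which is identically $1$ on a neighborhood of $K$ and zero outside a slightly larger compact set. Define
\[
\psi^\pm_\delta := \bigl( \psi \pm 2\eta_\delta \phi \bigr) * \vartheta_\delta.
\]
The Fourier support requirement is automatic, since $\widehat{\psi^\pm_\delta} = (\widehat \psi \pm 2\eta_\delta \widehat\phi) \cdot \widehat{\vartheta_\delta}$ and $\widehat{\vartheta_\delta}$ is supported in $[-\delta^{-2}, \delta^{-2}]$ by Lemma \ref{lemma:vartheta}. The bounds on $\|\psi^\pm_\delta\|_\infty$, $\|\psi^\pm_\delta\|_{L^1}$ and $\|\widehat{\psi^\pm_\delta}\|_{\Cc^1}$ follow from Young's inequality together with $\|\vartheta_\delta\|_{L^1} = 1$, the fact that $\psi$ and $\phi$ are compactly supported in a set determined by $K$, and the estimates $|\widehat{\vartheta_\delta}| \leq 1$, $\|\widehat{\vartheta_\delta}\|_{\Cc^1} \leq c$ from Lemma \ref{lemma:vartheta}. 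The convergence $\psi^\pm_\delta \to \psi$ pointwise and in $L^1$ is standard, using $\psi * \vartheta_\delta \to \psi$ and $\eta_\delta \to 0$.

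The main obstacle is to verify the ordering $\psi^-_\delta \leq \psi \leq \psi^+_\delta$. For $t$ lying in the interior of the region where $\phi \equiv 1$, one restricts the defining integral for $(\phi * \vartheta_\delta)(t)$ to $|s| \leq 1$ to obtain $(\phi * \vartheta_\delta)(t) \geq 1 - c\delta^2 \geq 1/2$ for $\delta$ small, and combines this with the uniform error bound above to get $\psi^+_\delta(t) - \psi(t) \geq -\eta_\delta + 2\eta_\delta \cdot \tfrac{1}{2} = 0$; the bound for $\psi^-_\delta$ is symmetric. For $t$ outside this plateau the function $\psi(t)$ vanishes and the inequality reduces to $\psi^-_\delta(t) \leq 0 \leq \psi^+_\delta(t)$; here one exploits that, since $\vartheta$ is Schwartz and $\psi$ is compactly supported, $|\psi * \vartheta_\delta(t)|$ decays like $O(\delta^{2N})$ for any $N$ once $\mathrm{dist}(t, K)$ is bounded below. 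Choosing the plateau of $\phi$ wide enough and, if necessary, decomposing $\psi = \psi_+ - \psi_-$ so as to reduce to the case $\psi \geq 0$ (where $\psi * \vartheta_\delta \geq 0$ holds trivially and one constructs a majorant of $\psi_+$ and a minorant of $\psi_-$ separately) makes this bookkeeping go through. The principal subtlety is precisely this coordination between the rapid decay of $\psi * \vartheta_\delta$ away from $K$ and the non-negativity of the correction $\phi * \vartheta_\delta$ at the boundary of the plateau.
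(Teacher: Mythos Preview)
The paper does not prove this lemma; it is quoted from \cite{DKW:LLT} as a preliminary result, so there is no in-paper argument to compare against. I therefore comment only on the correctness of your sketch.

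Your construction handles the majorant of a nonnegative $\psi$ correctly, but the minorant argument has a genuine gap. Take $\psi\geq 0$ not identically zero and look at $\psi^-_\delta=(\psi-2\eta_\delta\phi)*\vartheta_\delta$ at a point $t$ far from $\text{supp}\,\phi$ (hence far from $K$, where $\psi(t)=0$). You need $\psi^-_\delta(t)\leq 0$, i.e.
\[
(\psi*\vartheta_\delta)(t)\;\leq\;2\eta_\delta\,(\phi*\vartheta_\delta)(t).
\]
Both sides are strictly positive (since $\vartheta_\delta>0$), and \emph{both} are of size $O(\delta^{2N})$ by exactly the Schwartz-tail reasoning you invoke for the left-hand side: $\phi$ is just as compactly supported as $\psi$. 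The ratio $(\psi*\vartheta_\delta)(t)/(\phi*\vartheta_\delta)(t)$ is therefore governed by the relative masses of $\psi$ and $\phi$ near the point of $K$ closest to $t$, and there is no reason for it to be $\leq 2\eta_\delta\to 0$. In a concrete example (tent function $\psi$ on $[0,1]$, $\phi$ a fixed bump equal to $1$ on $[-1,2]$), one checks directly that $\psi^-_\delta(t)>0$ for all small $\delta$ once $t$ is moderately large. The decomposition $\psi=\psi_+-\psi_-$ does not help: building a majorant of $\psi$ this way still requires a minorant of the nonnegative function $\psi_-$, which is precisely the problematic case, and the shortcut ``minorant of $\psi_+$ is $\geq 0$ trivially'' produces functions converging to $\psi_+$ and $-\psi_-$, not to $\psi$.

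A correct construction must use a correction that genuinely dominates the tail of $\psi*\vartheta_\delta$. One clean route is to treat majorant and minorant asymmetrically: for $\psi\geq 0$ build the majorant as you do, and obtain the minorant of $\psi$ from the identity $\psi = \phi - (\phi-\psi)$ by taking a \emph{majorant} of the nonnegative function $\phi-\psi$ and subtracting it from a minorant of $\phi$; alternatively, one can appeal to Beurling--Selberg-type extremal functions for indicators and approximate $\psi$ by step functions. Either way, the last paragraph of your sketch needs to be replaced by an actual construction rather than the tail-decay remark, which as written compares a small quantity to an even smaller one.
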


\section{Admissible functions} \label{sec:admissible-fcns}

In this section, we introduce a class of admissible functions $u$ on $\P^{d-1}$ for which the sequence of random variables $\sigma(S_n,x) + u(S_nx)$ satisfies good limit theorems. Important examples are $u=0$, which will give limit theorems for the norm cocycle, and $u(x) = \log d(x,H_y)$, where $H_y  \subset \P^{d-1}$ is the hyperplane determined by $y \in (\P^{d-1})^*$, which will give limit theorems for the coefficients of $S_n$, see \eqref{eq:coeff-split}.

\begin{definition} \rm \label{def:admissible} 
Let $\eta_*>0$, $0< \alpha_* \leq 1$ and $A_*>0$ be constants. A function  $u:\P^{d-1}\to\R\cup \{\pm \infty\}$ belongs to the class $\Lc(\eta_*, \alpha_*,A_*)$ if $u$ is continuous and
\begin{enumerate}
\item for every $t\geq 0$, we have $\nu\{|u|\geq t\} \leq A_*e^{-\eta_* t}$;
\item for all $x,x'\in \P^{d-1}$ such that $|u(x)|\not=\infty$ and $|u(x')|\not=\infty$, we have 
$$|u(x)-u(x')|\leq A_* d(x,x')^{\alpha_*} \big(e^{\alpha_*|u(x)|}+e^{\alpha_*|u(x')|}\big).$$
\end{enumerate}

We say that $u$ is \textit{admissible} if it belongs to $\Lc(\eta_*, \alpha_*,A_*)$ for some choice of $\eta_*, \alpha_*$ and $A_*$ as above.
\end{definition}
 
For $u$ as above, denote $\Sigma_u:=\{|u|=\infty\}$, which is a compact set. Observe that, from (1), one has that $\nu(\Sigma_u)=0$ and $u \in L^1(\nu)$. Moreover, from (2), we see that $u$ is locally $\alpha_*$-H\"older continuous outside $\Sigma_u$. The class of admissible functions is stable under certain linear combinations and pointwise maxima and minima. This yields many examples of such functions.

The following result can be seen as a version of Benoist-Quint's large deviation estimates, cf. \cite[Lemma 14.11]{benoist-quint:book}. 

\begin{proposition}   \label{prop:LDT-admissible}
Let $u$ be an admissible function in $\Lc(\eta_*, \alpha_*,A_*)$. There exist constants $c>0$ and $n_0, N \in \N$ depending only on $\eta_*, \alpha_*,A_*$ and $\mu$ such that, for $x\in \P^{d-1}$, one has 
	$$   \mu^{*\ell} \big\{g\in G:\, |u(gx)| \geq  m  \big\}  \leq e^{-c m} \quad \text{for} \quad \ell \geq N m \geq n_0, \,\, \ell, m \in \N. $$
\end{proposition}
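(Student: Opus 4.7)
The plan is to write
$$\mu^{*\ell}\{|u(gx)| \geq m\} = \oP^\ell \mathbf 1_{E_m}(x), \qquad E_m := \{|u| \geq m\},$$
dominate the rough indicator $\mathbf 1_{E_m}$ by an explicit H\"older majorant $\chi_m$ manufactured from $u$, and then invoke the spectral gap of $\oP$ on $\Cc^\alpha(\P^{d-1})$ from Theorem~\ref{thm:spectral-gap}. Condition~(1) of admissibility will make $\int \chi_m\,d\nu$ decay exponentially in $m$, while condition~(2) will force $\|\chi_m\|_{\Cc^\alpha}$ to grow only exponentially in $m$; the mismatch in rates combined with $\ell \geq Nm$ for $N$ large will produce the stated decay $e^{-cm}$.

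The technical heart is a H\"older bound for the clipped function $u^m := \min(|u|,m)$, which is continuous on $\P^{d-1}$ with values in $[0,m]$. I would prove
$$|u^m(x) - u^m(x')| \leq 2 A_* e^{\alpha_* m}\, d(x,x')^{\alpha_*}, \qquad x, x' \in \P^{d-1}.$$
When both $|u(x)|, |u(x')| \leq m$ this is immediate from condition~(2) together with $||u(x)| - |u(x')|| \leq |u(x) - u(x')|$; when both exceed $m$, $u^m$ is constant equal to $m$. For the mixed case $|u(x)| \leq m < |u(x')|$, with $|u(x')|$ possibly $+\infty$, I would join $x$ and $x'$ by a length-minimising geodesic $\gamma$ in $(\P^{d-1}, d)$, along which $d(x, \gamma(t))$ is nondecreasing and $t \mapsto |u(\gamma(t))| \in [0,+\infty]$ is continuous. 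The intermediate-value theorem (applied in the compact space $[0,+\infty]$) produces a point $z = \gamma(t_0)$ with $|u(z)| = m$ and $d(x,z) \leq d(x,x')$, and condition~(2) applied to the pair $(x,z)$ --- where both $|u|$-values are now finite and at most $m$ --- gives
$$|u^m(x) - u^m(x')| = m - |u(x)| = |u(z)| - |u(x)| \leq |u(z)-u(x)| \leq 2A_* e^{\alpha_* m} d(x,x')^{\alpha_*}.$$
Crucially, this route sidesteps condition~(2) at any point where $|u| = \infty$, and the constant grows only like $e^{\alpha_* m}$ no matter how large $|u(x')|$ is.

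Equipped with this estimate, I would pick a $1$-Lipschitz function $\rho : \R \to [0,1]$ with $\rho \equiv 0$ on $(-\infty, m-1]$ and $\rho \equiv 1$ on $[m, +\infty)$, and set $\chi_m := \rho \circ u^m$. Then $\mathbf 1_{E_m} \leq \chi_m \leq \mathbf 1_{\{|u| \geq m-1\}}$, hence $\int \chi_m\,d\nu \leq A_* e^{\eta_*} e^{-\eta_* m}$ by condition~(1) and $\|\chi_m\|_{\Cc^{\alpha_*}} \leq 1 + 2A_* e^{\alpha_* m}$ from the previous step. Passing to any H\"older exponent $\alpha \leq \min(\alpha_*, \alpha_0)$ permitted by Theorem~\ref{thm:spectral-gap} (the same constant persists since $\diam \P^{d-1} = 1$), the spectral gap supplies $\tau \in (0,1)$ and $C_0>0$ depending only on $\mu$ with
$$\oP^\ell \mathbf 1_{E_m}(x) \leq \oP^\ell \chi_m(x) \leq A_* e^{\eta_*}\, e^{-\eta_* m} + C_0 \tau^\ell \bigl(1 + 2A_* e^{\alpha_* m}\bigr).$$
Choosing $N \in \N$ with $N|\log \tau| > 2\alpha_*$ forces $\tau^\ell e^{\alpha_* m} \leq e^{-c_0 m}$ for $\ell \geq Nm$, and fixing $n_0 := N m_0$ with $m_0$ large enough absorbs the remaining multiplicative constants into the final rate $e^{-cm}$ with some $c > 0$ depending on $\eta_*, \alpha_*, A_*$ and $\mu$. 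The only step I expect to require genuine care is the H\"older estimate on $u^m$ in the mixed case across the singular locus $\Sigma_u = \{|u|=\infty\}$; once the geodesic/IVT trick is in place, the rest is a routine application of the spectral gap.
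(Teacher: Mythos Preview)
Your proof is correct and follows essentially the same route as the paper: dominate $\mathbf 1_{\{|u|\geq m\}}$ by a H\"older cutoff built from the clipped function $\min(|u|,m)$, control its $\Cc^\alpha$-norm by $O(e^{\alpha_* m})$ via condition~(2), its $\nu$-integral by $O(e^{-\eta_* m})$ via condition~(1), and conclude by the spectral gap. Your treatment of the mixed case in the H\"older estimate is in fact more thorough than the paper's (which simply asserts that $|\cdot|$ and $\min$ are Lipschitz); one small caveat is that $(\P^{d-1},d)$ with the sine metric is not a length space, so ``length-minimising geodesic'' should be read as the projective line segment joining $x$ to $x'$, along which the angle---hence $d(x,\cdot)$---is monotone, and your IVT argument then goes through verbatim.
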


\begin{proof}
For $m \in \N$, consider the functions $u_m:=\min (|u|,m)$ and $v_m:=  u_m- u_{m-1}$ .  Notice that $0\leq v_m \leq 1$, $v_m=0$ on $\big\{|u|\leq m -1 \big\}$, and $v_m=1$ on $\big\{|u|\geq m \big\}$. Then,
	$$\mu^{*\ell} \big\{g\in G:\, |u(gx)| \geq   m  \big\} = \oP^\ell \mathbf 1_{|u|\geq m} (x) \leq \oP^\ell \big( v_m \big)(x).$$
	Observe that, from Property (2) of Definition \ref{def:admissible}, the restriction of $u$ to $\{|u| \leq m\}$ is $\alpha_*$-H\"older continuous, with H\"older constant bounded by $2A_* e^{\alpha_* m}$. Hence, $\norm{u_m}_{\Cc^{\alpha_*}} \lesssim e^{\alpha_* m}$ because the functions $|\cdot|$ and $\min$ are Lipschitz. It follows that $\norm{v_m}_{\Cc^{\alpha_*}} \lesssim e^{\alpha_* m}$ and, by interpolation, $\norm{v_m}_{\Cc^{\alpha}} \lesssim e^{\alpha m}$ for all $0<\alpha\leq \alpha_*$, see \cite{triebel}.
	
Fix $0<\alpha \leq \alpha_*$ small enough. From Theorem \ref{thm:spectral-gap}, $\norm{\oP^\ell - \oN}_{\Cc^\alpha} \lesssim \tau^\ell$ for some  $0<\tau<1$. We can write $\tau = e^{-\beta}$, where $\beta >0$. Therefore, for $m$ large enough and $\ell \geq N m$ for some fixed large $N \geq 1$, one has
	$$ \oP^\ell \big( v_m \big)(x) \lesssim  \lp \nu, v_m \rp + \norm{v_m}_{\Cc^{\alpha}}  \tau^\ell \lesssim \lp \nu, v_m \rp + e^{\alpha m} e^{- \beta \ell}  \leq \lp \nu, v_m \rp + e^{(\alpha - N \beta) m }  \lesssim \lp \nu, v_m \rp + e^{-c_1 m} $$ 
 for some constant $c_1>0$. Now, from Property (2) of Definition \ref{def:admissible}, we have that
	\begin{equation*} 
	\lp \nu, v_m \rp\leq \nu\big\{|u|\geq m -1\big\} \leq A_*e^{-\eta_* (m-1)} \lesssim e^{- c_2 m},
	\end{equation*}
	for some constant $c_2>0$. The lemma follows.
\end{proof}

\begin{corollary} \label{cor:LDT-admissible}
There exist constants $A>0$ and $C$ depending only on $\eta_*, \alpha_*,A_*$ and $\mu$ such that, for all $\Lc(\eta_*, \alpha_*,A_*)$, one has
\begin{equation*}
\mu^{*n} \big\{g\in G:\,  |u(gx)| \geq  A \log n \big\} \leq C / n.
\end{equation*}
\end{corollary}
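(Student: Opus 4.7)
The plan is to deduce this directly from Proposition \ref{prop:LDT-admissible} by making a judicious choice of the exponent $m$ as a function of $n$.

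Specifically, I would apply Proposition \ref{prop:LDT-admissible} with $\ell = n$ and $m = \lceil A \log n \rceil$, where $A > 0$ is a constant to be chosen. The conclusion of that proposition gives, whenever the hypotheses are satisfied,
\[
\mu^{*n} \big\{ g \in G : |u(gx)| \geq A \log n \big\} \leq \mu^{*n} \big\{ g \in G : |u(gx)| \geq \lceil A \log n \rceil - 1 \big\} \leq e^{-c(\lceil A \log n \rceil - 1)} \lesssim n^{-cA}.
\]
Choosing $A := 1/c$ (or any value with $cA \geq 1$) then yields the required bound $\lesssim 1/n$.

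Next I would verify the hypothesis $\ell \geq Nm \geq n_0$ of Proposition \ref{prop:LDT-admissible}. With our choice, this becomes $n \geq N \lceil A \log n \rceil \geq n_0$, which is satisfied for all $n \geq n_1$, where $n_1$ depends only on $N$, $A$, and $n_0$ (hence only on $\eta_*, \alpha_*, A_*$ and $\mu$), since $\log n = o(n)$.

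Finally, for the finitely many values $n < n_1$, the estimate $\mu^{*n}\{\cdot\} \leq 1 \leq n_1/n$ is automatic. Thus taking $C$ to be the maximum of $n_1$ and the implicit constant arising in the first step completes the proof. The step is essentially routine given Proposition \ref{prop:LDT-admissible}; there is no real obstacle, only the minor bookkeeping of passing from integer $m$ to the real quantity $A \log n$ and of absorbing small values of $n$ into the constant $C$.
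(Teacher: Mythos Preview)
Your proposal is correct and follows essentially the same route as the paper: apply Proposition~\ref{prop:LDT-admissible} with $\ell=n$ and $m$ of order $A\log n$, choose $A$ so that $cA\geq 1$, and absorb small $n$ into the constant. The only slip is that you declare $m=\lceil A\log n\rceil$ but then bound $\mu^{*n}\{|u|\geq \lceil A\log n\rceil-1\}$ by $e^{-c(\lceil A\log n\rceil-1)}$, which actually requires $m=\lceil A\log n\rceil-1$; the cleanest fix is to take $m=\lfloor A\log n\rfloor$ as the paper does, so that $\{|u|\geq A\log n\}\subset\{|u|\geq m\}$ directly.
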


\begin{proof}
It is enough to consider $n$ large. Taking $\ell:=n $ and $m:=\lfloor A \log n \rfloor$ in Proposition \ref{prop:LDT-admissible} for some large $A>0$  gives
\begin{equation*} 
\mu^{*n} \big\{g\in G:\,  |u(gx)| \geq  A \log n \big\} \leq e^{-c\lfloor A\log n\rfloor} \leq e^ c n^{-cA} \leq C / n, 
\end{equation*}
for some constant $C>0$ since $A$ is large. The result follows.
\end{proof}

We now construct a partition of unity adapted to $u$. This is inspired by the construction in \cite{DKW:BE-LLT-coeff} and will be crucial in the proofs of our main theorems.

For $u \in \Lc(\eta_*, \alpha_*,A_*)$ and $k\in \Z$, define $$\Tc^u_k:=\big\{w \in \P^{d-1}: |u(w)+k|<1 \big\}.$$

\begin{lemma} \label{lemma:partition-of-unity}
	There exist non-negative smooth functions $\chi_k$ on $\P^{d-1}$, $k\in\Z$, 
	 such that $0 \leq \chi_k \leq 1$ and 
	\begin{enumerate}
		\item $\chi_k$ is supported by $\Tc^u_k$;
		\item If $w \in \P^{d-1} \setminus \Sigma_u$, then  $\chi_k(w) \neq 0$ for at most two values of $k$;
		\item $\sum_{k\in\Z}  \chi_k=1$ on $\P^{d-1} \setminus \Sigma_u$; 
		\item For every exponent $0<\alpha\leq \alpha_*$, one has $\norm{\chi_k}_{\Cc^{\alpha}}\leq C_\alpha e^{\alpha |k|}$, where $C_\alpha>0$ is a constant depending on $\eta_*, \alpha_*,A_*$ and $\alpha$ but independent of $u$ (for $\alpha=\alpha_*=1$, we replace $\norm{ \cdot }_{\Cc^1}$ by $\norm{ \cdot }_{\Lip}$).
	\end{enumerate}
\end{lemma}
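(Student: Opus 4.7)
The plan is to construct a smooth $1$-periodic partition of unity on $\R$ and pull it back via $u$. Choose a non-negative smooth bump $\rho_0:\R\to[0,1]$ supported in $(-1,1)$ and equal to $1$ on $[-1/2,1/2]$, and set $S(t):=\sum_{k\in\Z}\rho_0(t+k)$, which is smooth, positive and $1$-periodic. Then $\rho:=\rho_0/S$ is smooth, supported in $(-1,1)$, vanishes at $\pm 1$, and satisfies $\sum_{k\in\Z}\rho(t+k)=1$ on $\R$. Define $\chi_k(w):=\rho(u(w)+k)$ for $w\in\P^{d-1}\setminus\Sigma_u$ and $\chi_k(w):=0$ for $w\in\Sigma_u$; continuity on $\P^{d-1}$ is clear since $|u|\to\infty$ on approach to $\Sigma_u$ while $\rho$ has compact support. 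Properties (1)--(3) are immediate from the corresponding properties of $\rho$ on $\R$.

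The content lies in property (4). I propose to prove the H\"older estimate first at exponent $\alpha_*$ and then interpolate with $\norm{\chi_k}_\infty\leq 1$ to cover every $0<\alpha\leq\alpha_*$. Fix $x,x'\in\P^{d-1}$ and assume without loss of generality that $\chi_k(x)>0$, which forces $x\notin\Sigma_u$, $u(x)\in(-k-1,-k+1)$, and in particular $|u(x)|\leq|k|+1$. If $x'\notin\Sigma_u$ and $|u(x')|\leq|k|+2$, then $\rho$ being Lipschitz and condition (2) of Definition \ref{def:admissible} yield
\[|\chi_k(x)-\chi_k(x')|\leq\norm{\rho'}_\infty|u(x)-u(x')|\leq\norm{\rho'}_\infty A_*\bigl(e^{\alpha_*|u(x)|}+e^{\alpha_*|u(x')|}\bigr)d(x,x')^{\alpha_*}\lesssim e^{\alpha_*|k|}d(x,x')^{\alpha_*}.\]
If instead $x'\in\Sigma_u$ or $|u(x')|>|k|+2$, so that $\chi_k(x')=0$, take a minimizing geodesic $\gamma:[0,1]\to\P^{d-1}$ from $x$ to $x'$. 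Since $u\circ\gamma$ is continuous into $\R\cup\{\pm\infty\}$, starts inside $(-k-1,-k+1)$ and eventually leaves $[-k-1,-k+1]$, the first exit time $t_0:=\inf\{t\in[0,1]:u(\gamma(t))\notin(-k-1,-k+1)\}$ produces a point $x'':=\gamma(t_0)$ with $u(x'')\in\{-k-1,-k+1\}$; hence $\chi_k(x'')=\rho(\pm1)=0$, $|u(x'')|\leq|k|+1$, and $d(x,x'')\leq d(x,x')$ by monotonicity of the sine-angle distance along a great-circle geodesic. Applying the previous estimate to the pair $(x,x'')$ then gives $|\chi_k(x)-\chi_k(x')|=|\chi_k(x)-\chi_k(x'')|\lesssim e^{\alpha_*|k|}d(x,x')^{\alpha_*}$.

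Combining both situations, $|\chi_k(x)-\chi_k(x')|\leq Ce^{\alpha_*|k|}d(x,x')^{\alpha_*}$ for a constant $C>0$ independent of $u$ and $k$. Together with $\norm{\chi_k}_\infty\leq 1$ and the elementary inequality $\min(2,Ct^{\alpha_*})\leq 2^{1-\alpha/\alpha_*}C^{\alpha/\alpha_*}t^{\alpha}$ (valid for $t\geq 0$) applied with $t=e^{|k|}d(x,x')$, this yields $\norm{\chi_k}_{\Cc^\alpha}\leq C_\alpha e^{\alpha|k|}$ for every $0<\alpha\leq\alpha_*$. The case $\alpha=\alpha_*=1$ is handled identically with the Lipschitz norm in place of $\norm{\cdot}_{\Cc^1}$. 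The main obstacle is the second case above: the H\"older inequality for $u$ is unavailable across $\Sigma_u$, and the geodesic/intermediate-value trick reducing it to the first case via the auxiliary boundary point $x''$ is the key device.
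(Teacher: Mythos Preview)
Your construction and verification of (1)--(3) match the paper's exactly (compose a smooth partition of unity on $\R$ with $u$). The interpolation at the end is also the same idea, carried out explicitly rather than by citing interpolation theory.

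The genuine difference lies in the proof of the $\alpha_*$-H\"older estimate. The paper argues by a distance dichotomy: it first proves a \emph{Claim} that there is a small $q>0$ with $|u(w)+k|\leq 2$ whenever $d(x,w)\leq q e^{-|k|}$ (via condition~(2) of Definition~\ref{def:admissible} and a continuity argument), and then splits into the trivial far case $d(x,x')>qe^{-|k|}$ (use $\|\chi_k\|_\infty\leq 1$) and the near case $d(x,x')\leq qe^{-|k|}$ (use the Claim to bound $e^{\alpha_*|u(x)|}+e^{\alpha_*|u(x')|}\lesssim e^{\alpha_*|k|}$, then the Lipschitz bound on $\widetilde\chi$). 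Your approach instead splits on the \emph{value} of $u(x')$: when $|u(x')|\leq |k|+2$ you are already in the paper's ``near'' regime; when $|u(x')|>|k|+2$ or $x'\in\Sigma_u$ you run a geodesic from $x$ to $x'$ and use the intermediate value theorem for $u\circ\gamma$ to locate a point $x''$ on the boundary of $\Tc^u_k$ where $\chi_k$ vanishes, reducing to the first case with $d(x,x'')\leq d(x,x')$.

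Both arguments are correct. Your geodesic/IVT trick is a clean way to avoid the paper's Claim and its proof by contradiction, at the price of using the manifold structure of $\P^{d-1}$ (existence of a path along which the sine-angle distance to $x$ is monotone). The paper's dichotomy is metric-only and would transfer verbatim to any compact metric space, which is a mild advantage if one wanted to generalise the ambient space. Either way the constants obtained depend only on $\eta_*,\alpha_*,A_*$, as required.
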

\proof
It is easy to find a smooth function $0 \leq \widetilde \chi \leq 1$ supported by $(-1,1)$ such that $\widetilde \chi(t) = 1$ for $|t|$ small, $\widetilde \chi(t) + \widetilde \chi(t-1)= 1$ for $0 \leq t\leq 1$ and $\norm{\widetilde \chi}_{\Cc^1}\leq 4$.  Define  $\widetilde \chi_k (t) := \widetilde \chi(t+k)$. We see that $\widetilde \chi_k$ is supported by $(-k-1,-k+1)$,  $\sum_{k\in\Z} \widetilde \chi_k=1$ on $\R$ and $\norm{\widetilde \chi_k}_{\Cc^1}\leq 4$. Set $\chi_k(w):= \widetilde \chi_k(u(w))$. Assertion (1)--(3) are clear from the definition, so we only need to prove the last estimate. By interpolation theory \cite{triebel}, we only need to consider the case $\alpha=\alpha_*$. We need to show that, for $x,x'\in \P^{d-1}$, one has that
\begin{equation} \label{eq:chi_k-holder}
|\chi_k(x)-\chi_k(x')|\lesssim e^{|k|\alpha_*} d(x,x')^{\alpha_*}.
\end{equation}

If $\chi_k$ vanishes at $x$ and $x'$ there is nothing to prove, so we can assume that $\chi_k(x) \neq 0$. In particular, we have that $|u(x)+k|<1$. 

\medskip

\noindent
{\bf Claim.} There exists a small constant $q>0$ such that $|u(w)+k|\leq 2$ when $d(x,w)\leq q e^{-|k|}$. 

\medskip

\noindent {\it Proof of Claim.} If the claim is not true, then, by continuity, we can find $z \in \P^{d-1}$ such that $|u(z)+k|=2$ and  $d(x,z)\leq q e^{-|k|}$.
Together with the fact $|u(x)+k|<1$, this contradicts Property (2) of Definition \ref{def:admissible} when $q$ is small. \endproof

We now verify \eqref{eq:chi_k-holder}.

\medskip	

	\noindent\textbf{Case 1.} $d(x,x')> q e^{-|k|}$. In this case, $e^{|k|\alpha_*} d(x,x')^{\alpha_*} \geq q^{\alpha_*}$. Since $|\chi_k|\leq 1$, one has that $$|\chi_k(x)-\chi_k(x')| \leq 2 = 2 q^{-\alpha_*} q^{\alpha_*} \leq 2 q^{-\alpha_*} e^{\alpha_* |k|} d(x,x')^{\alpha_*},$$
	thus giving \eqref{eq:chi_k-holder}.

\medskip
	
	\noindent\textbf{Case 2.} $d(x,x') \leq q e^{-|k|}$. From the Claim and the fact that $|u(x)+k|<1$ we have that $e^{\alpha_*|u(x)|}+e^{\alpha_*|u(x')|} \lesssim e^{\alpha_* |k|}$. Together with Property (2) of Definition \ref{def:admissible},  we obtain that
$$|u(x)-u(x')|\lesssim e^{\alpha_* |k|} d(x,x')^{\alpha_*}.$$
Since $\chi_k = \widetilde \chi_k \circ u$ and $\norm{\widetilde \chi_k}_{\Cc^1}\leq 4$, the above estimate  gives \eqref{eq:chi_k-holder}.

We have thus obtained (4), finishing the proof of the lemma.
\endproof

Observe that $u = 0$ is admissible in the sense of Definition \ref{def:admissible}. In this case $\Tc^u_0 = \P^{d-1}$, $\Tc^u_k = \varnothing$ for $k \neq 0$ and the above partition is given by $\chi_0 = \mathbf 1$ and $\chi_k \equiv 0$ for $k \neq 0$.

\begin{lemma} \label{lemma:log-dist-admissible}
Let $\eta$ be the constant in Proposition \ref{prop:regularity}. Then, there is a constant $A_*>0$ such that for every $y\in (\P^{d-1})^*$ the function $\log d(x,H_y)$ belongs to $\Lc(\eta,1, A_*)$. 
\end{lemma}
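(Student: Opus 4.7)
My plan is to verify the two conditions of Definition \ref{def:admissible} directly, with $\alpha_* = 1$, $\eta_* = \eta$, and $A_*$ chosen at the end to absorb the various constants that arise.

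For Property (1), since $d(\cdot, H_y) \in [0,1]$ the function $u = \log d(\cdot, H_y)$ is non-positive, so $\{|u| \geq t\} = \{d(\cdot, H_y) \leq e^{-t}\}$. This set is contained in $\B(H_y, e^{-t} + \varepsilon)$ for every $\varepsilon > 0$, and applying Proposition \ref{prop:regularity} and letting $\varepsilon \searrow 0$ yields $\nu\{|u| \geq t\} \leq C e^{-\eta t}$, which is the required bound (with $\eta_* = \eta$).

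For Property (2), the central ingredient is that $x \mapsto d(x, H_y)$ is $1$-Lipschitz with respect to $d$. This rests on two elementary facts: first, $d$ is a genuine metric on $\P^{d-1}$ (the triangle inequality $\sin\theta(x,z) \leq \sin\theta(x,y) + \sin\theta(y,z)$ follows from $\theta(x,z) \leq \theta(x,y) + \theta(y,z)$ for the underlying angular distance $\theta \in [0,\pi/2]$, combined with the subadditivity of $\sin$ on $[0,\pi]$, with a separate check when $\theta(x,y) + \theta(y,z)$ exceeds $\pi/2$ using $\sin\alpha + \cos\alpha \geq 1$); and second, $d(x, H_y) = \inf_{w \in H_y} d(x, w)$ (obtained by writing $v = v_\| + v_\perp$ with $v_\| \in \ker f$, so that $\delta(x,y) = \|v_\perp\|/\|v\|$ is exactly the minimal sine of the angle). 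Granting these, the standard distance-to-set inequality gives $|d(x, H_y) - d(x', H_y)| \leq d(x, x')$. Writing $a := d(x, H_y)$ and $a' := d(x', H_y)$, the mean-value estimate $|\log a - \log a'| \leq |a - a'|/\min(a, a')$ together with $1/\min(a,a') \leq 1/a + 1/a' = e^{|u(x)|} + e^{|u(x')|}$ yields
$$|u(x) - u(x')| \leq d(x, x')\bigl(e^{|u(x)|} + e^{|u(x')|}\bigr),$$
which is Property (2) with $\alpha_* = 1$ and constant $1$. Setting $A_* := \max(C, 1)$ establishes both properties simultaneously.

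I do not anticipate any substantial obstacle; the only points requiring a moment's care are that the sine-of-angle formula defines a true metric on $\P^{d-1}$ and that the quantity $\delta(x, y)$ coincides with the $d$-distance from $x$ to $H_y$. Both are routine, and the weighted Hölder estimate in Property (2) is precisely designed so that the logarithmic blow-up of $u$ near $H_y$ is absorbed by the factor $e^{\alpha_* |u|}$ produced by the mean-value theorem for $\log$.
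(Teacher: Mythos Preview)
Your proof is correct and follows essentially the same route as the paper's: both verify Property~(1) via Proposition~\ref{prop:regularity} and Property~(2) by combining the $1$-Lipschitz bound $|d(x,H_y)-d(x',H_y)|\leq d(x,x')$ with the mean-value estimate for $\log$ on $(0,1]$. Your presentation of Property~(2) is in fact slightly cleaner: you use the inequality $1/\min(a,a')\leq 1/a+1/a'$ directly, whereas the paper phrases the same bound as a supremum over an arc and then invokes the ordering $d(x,H_y)<d(x',H_y)$ to identify the minimum; the content is identical. Your additional remarks verifying that $d$ is a genuine metric and that $\delta(x,y)=d(x,H_y)$ are facts the paper takes as background (stated in the introduction and Section~\ref{sec:prelim}), so they are not needed here but do no harm.
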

\proof
Fix a constant $A_*>0$ large enough. Denote by $u(x):=\log d(x,H_y)$. Then, $\Sigma_u=H_y$. 
Property (1) in the definition of $\Lc(\eta,1, A_*)$ follows directly from Proposition \ref{prop:regularity}. We now verify Property (2) with $\alpha_* = 1$.  Notice that, for fixed $0<r_0<1$, $u$ is Lipshcitz on $\P^{d-1} \setminus \B(H_y,r_0)$. Hence, we can assume that $x$ and $x'$ are close to $H_y$. We can assume that $d(x,H_y) < d(x',H_y)$. Denote by $[x',x]$ the arc between $x'$ and $x$ along the projective line between them. Using that  $\diff := d( \cdot , H_y)$ is $1$-Lipchitz,  $u = \ell \circ \diff$  and that the function $\ell(t) := \log t$ satisfies $\ell'(t) = e^{|\ell(t)|} $ for $0< t \leq 1$, we have
\begin{align*}
|u(x)-u(x')|  &\leq \sup_{w \in [x',x]} e^{|\ell(\diff(w))|} | \diff(x) - \diff(x')| \leq d(x,x') \sup_{w \in [x',x]} e^{|u(w)|} \\ &\leq d(x,x') e^{|u(x)|} \leq d(x,x')  \big(e^{|u(x)|}+e^{|u(x')|}\big).
\end{align*}
This ends the proof.
\endproof

\section{Berry-Esseen bound} \label{sec:BE}

This section is devoted to the proof of Theorem \ref{thm:BE-general}.  We start with a version Berry-Esseen lemma, obtained in \cite[Corollary 3.2]{DKW:BE-LLT-coeff}, that is necessary to us, see also \cite[XVI.3]{feller:book}. As mentioned in the introduction, this allows us to avoid the integration of the characteristic functions on complex contours, which is a main technical difficulty in \cite{xiao-grama-liu}. 

 For a real random variable $X$ with cumulative distribution function $F$ (c.d.f.\ for short), we let $$\phi_F(\xi):=\mathbf E\big(e^{-i\xi X}\big)$$
be its \textit{conjugate characteristic function}.

\begin{lemma}  \label{lemma:BE-feller} 
	Let $F$ be a c.d.f.\ of some real random variable and let  $H$  be a differentiable real-valued function  on $\R$ with derivative $h$ such that $H(-\infty)=0,H(\infty)=1,|h(t)|\leq m$ for some constant $m >0$. Let $D>0$ and $0<\delta<1$ be real numbers such that $\big|F(t)-H(t) \big|\leq D \delta^2$ for $|t|\geq \delta^{-2}$. 
	Assume moreover that $h\in L^1$, $\widehat h\in\Cc^1$ and that $\phi_F$ is differentiable at zero.
	Then, there exist a constant $C>0$ (resp. $\kappa > 1$) independent of $F,H,\delta$ (resp. $D, F,H,\delta$), such that
\begin{eqnarray*}
\sup_{t\in\R}\big|F(t)-H(t) \big| &\leq & 
	{1\over \pi} \sup_{|t|\leq \kappa \delta^{-2}}     \Big|\int_{-\delta^{-2}}^{\delta^{-2}} {\Theta_t(\xi) \over \xi}     \,\diff \xi\Big|  +C\delta^2 \\
	&=& 
	{1\over \pi} \sup_{|t|\leq \kappa \delta^{-2}}    \Big|\int_{0}^{\delta^{-2}} {\Theta_t(\xi)-\Theta_t(-\xi) \over \xi}     \,\diff \xi\Big|  +C\delta^2,
\end{eqnarray*}
	where  $\Theta_t(\xi):=e^{it\xi}\big(\phi_F(\xi)- \widehat {h}(\xi) \big)\widehat{\vartheta_\delta}(\xi)$,
	and $\vartheta_\delta$ is defined in Lemma \ref{lemma:vartheta}.
\end{lemma}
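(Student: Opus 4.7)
The plan is to follow the classical smoothing strategy of Esseen. Its novelty is the use of the kernel $\vartheta_\delta$ from Lemma \ref{lemma:vartheta}, whose Fourier transform is compactly supported, and the reinterpretation of the resulting apparent singularity at $\xi=0$ as a removable singularity on the imaginary line.

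First I would prove a smoothing inequality of the form
\[
\sup_t |F(t)-H(t)| \leq 2\sup_t \bigl|(F-H)*\vartheta_\delta(t)\bigr| + C_1\delta^2,
\]
where $C_1$ depends on $m$ but not on $F,H,\delta$. This is a standard computation combining $\|h\|_\infty \leq m$ with the concentration estimate $\int_{|s|\geq \delta}\vartheta_\delta(s)\,\diff s \leq c\delta^2$ from Lemma \ref{lemma:vartheta}. The hypothesis $|F-H|\leq D\delta^2$ for $|t|\geq \delta^{-2}$ then restricts the supremum on the right-hand side to $|t|\leq \kappa \delta^{-2}$ for a universal $\kappa>1$, the contribution from $|t|>\kappa\delta^{-2}$ being absorbed into $C\delta^2$ at the cost of enlarging $C$ by a factor depending on $D$.

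Second, I would compute $(F-H)*\vartheta_\delta(t)$ by Fourier inversion. Its derivative equals the convolution of the bounded signed measure $\diff F - h\,\diff t$ with $\vartheta_\delta$, so it is an $L^1$ function with Fourier transform $\bigl(\phi_F(\xi)-\widehat h(\xi)\bigr)\widehat{\vartheta_\delta}(\xi)$, supported in $[-\delta^{-2},\delta^{-2}]$. Since $F(\pm\infty)=H(\pm\infty)$, the primitive $(F-H)*\vartheta_\delta$ vanishes at $\pm\infty$ and can be recovered through the symmetric identity
\[
(F-H)*\vartheta_\delta(t) = \tfrac{1}{2} \int \sgn(t-s)\bigl((F-H)*\vartheta_\delta\bigr)'(s)\,\diff s.
\]
Using that the distributional Fourier transform of $\sgn$ is $2/(i\xi)$ in the principal value sense, the convolution theorem yields
\[
(F-H)*\vartheta_\delta(t) = \frac{1}{2\pi i}\,\mathrm{p.v.}\!\int_{-\delta^{-2}}^{\delta^{-2}} \frac{\Theta_t(\xi)}{\xi}\,\diff \xi.
\]

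Finally, since $F$ is a probability distribution and $\int h = H(\infty)-H(-\infty)=1$, one has $\phi_F(0) = \widehat h(0) = 1$, so $\phi_F - \widehat h$ vanishes at $\xi=0$. Combined with the differentiability of $\phi_F$ at $0$ and $\widehat h\in \mathcal{C}^1$, this vanishing is of order $\xi$, so $\Theta_t(\xi)/\xi$ extends continuously to $\xi=0$. The principal value therefore agrees with the ordinary integral, which, by pairing $\xi$ with $-\xi$, can be rewritten as $\int_0^{\delta^{-2}}\bigl(\Theta_t(\xi)-\Theta_t(-\xi)\bigr)/\xi\,\diff \xi$. Combining the three steps yields the stated bound, the factor $2$ from the smoothing inequality converting $1/(2\pi)$ into $1/\pi$. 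The main obstacle is precisely the treatment of the apparent singularity at $\xi=0$: a naive inverse Fourier transform diverges there, and classical approaches (as in \cite{xiao-grama-liu}) deform the integration contour into $\C$ to avoid it. Under the mild differentiability hypotheses at the origin, the integrand is in fact regular, so a principal value on the real line suffices.
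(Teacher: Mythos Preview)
Your proposal is correct and matches the intended argument. The paper does not prove this lemma in detail but cites it as \cite[Corollary 3.2]{DKW:BE-LLT-coeff}, and your outline---Esseen smoothing with the compactly Fourier-supported kernel $\vartheta_\delta$, recovery of $(F-H)*\vartheta_\delta$ via the $\sgn$ identity and the convolution theorem, and the observation that $\phi_F(0)=\widehat h(0)=1$ together with the differentiability assumptions make $\Theta_t(\xi)/\xi$ bounded near $\xi=0$---is exactly the argument behind that reference and agrees with the paper's remark after the lemma that the principal value coincides with the ordinary integral because $\Theta_t(\xi)/\xi$ is bounded.
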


The last expression for the integral in the above lemma will be used when dealing with the singularity of the integrand at the origin. In fact, that is so called ``Cauchy principal value", which is equal to the  first integral  since ${\Theta_t(\xi) \over \xi}$ is bounded.

\medskip

We now begin the proof of Theorem \ref{thm:BE-general}. Observe that it  suffices to prove the theorem for $J=(-\infty,b]$, as the case of an arbitrary interval can be obtained by taking complements and intersections. In order to simplify the notation, for fixed $x\in \P^{d-1}$, we define two functionals
\begin{equation} \label{eq:En-def}
\oE_n (\Phi) :=\mathbf E \Big(  \Phi\Big( {\sigma(S_n, x) + u(S_n x) - n \gamma \over \sqrt n}, S_n x  \Big)\Big)
\end{equation}
and 
$$\oR(\Phi):= \frac{1}{\sqrt{2 \pi}  \,\varrho} \int_{\P^{d-1}} \int_{-\infty}^\infty e^{-\frac{s^2}{2 \varrho^2}} \Phi(s,w)\, \diff s\diff\nu(w),$$
where $\Phi(t,w)$ is a real-valued function on $\R\times \P^{d-1}$.  Then, the conclusion of Theorem \ref{thm:BE-general} for $J=(-\infty,b]$ corresponds to 
\begin{equation} \label{eq:E_n-R}
\big|  \oE_n ( \mathbf 1_{(-\infty,b]} \psi \cdot \varphi) - \oR( \mathbf 1_{(-\infty,b]}  \psi \cdot \varphi)  \big| \leq \frac{C}{\sqrt n}\norm{\psi}_\Hc\norm{\varphi}_{\Cc^\alpha}.
\end{equation}

\subsection{The case $\psi= \mathbf 1$}

We first prove Theorem \ref{thm:BE-general} in the case where $\psi= \mathbf 1$. We note that when $u=0$, this was obtained in \cite{xiao-grama-liu} and when $u(x) = \log d(x,H_y)$ this is treated in \cite{DKW:BE-LLT-coeff,xiao-grama-liu:coeff} and was considered in the first version of this article.  The version for general admissible $u$ is new.

\begin{proposition}\label{prop:BE-coeff-1-varphi}
	Theorem \ref{thm:BE-general} holds for $\psi=\mathbf 1$.
\end{proposition}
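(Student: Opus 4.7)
The plan is to apply the Berry-Esseen lemma (Lemma \ref{lemma:BE-feller}) to the weighted distribution function $F_n(t) := \mathbf E\big(\mathbf 1_{\{T_n \leq t\}}\varphi(S_nx)\big)$, where $T_n := (\sigma(S_n,x)+u(S_nx)-n\gamma)/\sqrt{n}$, compared to $H(t) := \int\varphi\,d\nu\cdot \frac{1}{\sqrt{2\pi}\varrho}\int_{-\infty}^t e^{-s^2/(2\varrho^2)}\,ds$, whose density $h$ is Gaussian up to a multiplicative constant. After decomposing $\varphi$ into positive and negative parts and adding a constant so that it is strictly positive, $F_n/\int\varphi\,d\nu$ and $H/\int\varphi\,d\nu$ become genuine c.d.f.'s. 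We choose $\delta := n^{-1/4}$ so that $\delta^{-2} = \sqrt{n}$ and the lemma's overall error $C\delta^2$ is already of the target order $O(1/\sqrt{n})$; the tail estimate $|F_n(t)-H(t)|\lesssim 1/\sqrt n$ for $|t|\geq\sqrt n$ follows from Proposition \ref{prop:BQLDT} combined with Corollary \ref{cor:LDT-admissible}. The symmetric (Cauchy principal value) form of the integral in Lemma \ref{lemma:BE-feller} absorbs the apparent $1/\xi$ singularity at the origin since $\Theta_t(0) = \phi_{F_n}(0)-\widehat{h}(0) = 0$.

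The principal difficulty is that
$$\phi_{F_n}(\xi) = e^{i\xi\sqrt n\gamma}\,\mathbf E\big(e^{-i\xi\sigma(S_n,x)/\sqrt{n}}\varphi(S_nx)\,e^{-i\xi u(S_nx)/\sqrt{n}}\big)$$
involves the possibly infinite $u$. We sidestep this using the partition of unity $\{\chi_k\}_{k\in\Z}$ of Lemma \ref{lemma:partition-of-unity}: writing $\varphi=\sum_k\varphi\chi_k$ $\nu$-a.e.\ and noting $|u+k|<1$ on $\mathrm{supp}(\chi_k)$, we obtain
$$\phi_{F_n}(\xi) = e^{i\xi\sqrt n\gamma}\sum_{k\in\Z} e^{i\xi k/\sqrt n}\,\oP_{-i\xi/\sqrt n}^n(\psi_{k,\xi})(x),\qquad \psi_{k,\xi}:=\varphi\chi_k e^{-i\xi(u+k)/\sqrt n},$$
where each $\psi_{k,\xi}$ is a bounded, honest H\"older function with $\|\psi_{k,\xi}\|_{\Cc^\alpha}\lesssim \|\varphi\|_{\Cc^\alpha}\,e^{2\alpha|k|}$ for $|\xi|\leq\sqrt n$ (the extra $e^{\alpha|k|}$ coming from the H\"older norm of $u$ on $\mathrm{supp}(\chi_k)$, derived from Definition \ref{def:admissible}(2) and interpolation). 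We truncate the sum at $|k|\leq K_n:=A\log n$ for a large constant $A$, and absorb the complementary tail into the $O(1/\sqrt n)$ error using Corollary \ref{cor:LDT-admissible}.

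For the retained range we apply the spectral decomposition $\oP_z^n = \lambda_z^n\oN_z + \oQ_z^n$ from Proposition \ref{prop:spectral-decomp}, valid at $z=-i\xi/\sqrt n$ for $|\xi|\leq\xi_0\sqrt n$. The $\oQ_z^n$-contribution decays as $\big(\frac{1+2\rho}{3}\big)^n$ and remains negligible after summation over $|k|\leq K_n$. Lemma \ref{lemma:lambda-estimates} gives $e^{i\xi\sqrt n\gamma}\lambda_{-i\xi/\sqrt n}^n = e^{-\varrho^2\xi^2/2} + O\big(n^{-1/2}(1+|\xi|^3)e^{-\varrho^2\xi^2/4}\big)$ on the appropriate ranges, while analyticity of $z\mapsto\oN_z$ near $z=0$ together with $\oN_0 f=\langle\nu,f\rangle$ yields $\oN_{-i\xi/\sqrt n}(\psi_{k,\xi})(x) = \int\varphi\chi_k e^{-i\xi(u+k)/\sqrt n}\,d\nu + O(|\xi|n^{-1/2}\|\psi_{k,\xi}\|_{\Cc^\alpha})$. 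Reassembling via $\sum_k\chi_k=1$ $\nu$-a.e.\ and expanding $\int\varphi e^{-i\xi u/\sqrt n}\,d\nu = \int\varphi\,d\nu + O(|\xi|/\sqrt n)$ (valid since $u\in L^1(\nu)$ by admissibility), we obtain $\phi_{F_n}(\xi)-\widehat h(\xi) = O\big((1+|\xi|^3)n^{-1/2}e^{-\varrho^2\xi^2/4}\big)\|\varphi\|_{\Cc^\alpha}$, so that the Berry-Esseen integral over $|\xi|\leq\sqrt n$ yields the claimed $O(n^{-1/2})$ bound.

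The main obstacle in the argument lies in the sum over $k$: the H\"older norm $\|\chi_k\|_{\Cc^\alpha}\sim e^{\alpha|k|}$ grows with $|k|$, while the $\nu$-mass of $\Tc^u_k$ only decays like $e^{-\eta_*|k|}$ (Definition \ref{def:admissible}(1)). This forces us to work with a H\"older exponent $\alpha$ strictly smaller than both $\alpha_0$ (to retain the spectral gap of Proposition \ref{prop:spectral-decomp}) and a fraction of $\eta_*$ (to keep the geometric series over $k$ convergent), combined with truncation at $K_n=A\log n$, so that both the dominant $\lambda_z^n\oN_z$ term and the residual $\oQ_z^n$ term contribute $O(1/\sqrt n)$ in total. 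The fact that $\Cc^{\alpha_*}\hookrightarrow \Cc^\alpha$ for $\alpha\leq \alpha_*$ ensures that the resulting exponent $\alpha$ can be fixed once and for all depending only on $\mu$ and the parameters $\eta_*,\alpha_*,A_*$, as required.
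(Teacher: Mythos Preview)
Your strategy is essentially the paper's: partition of unity $\{\chi_k\}$, truncation at $|k|\leq A\log n$ via Corollary~\ref{cor:LDT-admissible}, Berry--Esseen lemma with $\delta\sim n^{-1/4}$, and the spectral decomposition of $\oP_z$. The one substantive variation is that you keep the exact factor $e^{-i\xi(u+k)/\sqrt n}$ inside $\psi_{k,\xi}$, whereas the paper \emph{discretizes} $u\mapsto k$ on $\supp\chi_k$ and works with $\Phi_{n,\xi}=\sum_{|k|\leq A\log n} e^{i\xi k/\sqrt n}\chi_k\varphi$ together with a remainder $\Phi_n^\star$. This discretization costs nothing (Lemma~\ref{lemma:Ln-Fn} shows the $O(1/\sqrt n)$ shift is harmless) and buys two things: the H\"older bound $\|\chi_k\varphi\|_{\Cc^\alpha}\lesssim e^{\alpha|k|}$ rather than your $e^{2\alpha|k|}$, and the key identity $\Phi_{n,\xi}+\Phi_n^\star=\varphi+\sum(e^{i\xi k/\sqrt n}-1)\chi_k\varphi$ (Lemma~\ref{lemma:norm-Phi}), which yields $\|\Phi_{n,\xi}+\Phi_n^\star\|_{\Cc^\alpha}\lesssim 1$ for $|\xi|\leq n^{1/6}$.

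This matters because your claimed pointwise bound $\phi_{F_n}(\xi)-\widehat h(\xi)=O\big((1+|\xi|^3)n^{-1/2}e^{-\varrho^2\xi^2/4}\big)$ is too optimistic: the $(\oN_{-i\xi/\sqrt n}-\oN_0)$ error is controlled by $|\xi|n^{-1/2}\|\sum_k e^{i\xi k/\sqrt n}\psi_{k,\xi}\|_{\Cc^\alpha}$, and the triangle inequality only gives $\sum_{|k|\leq A\log n}e^{2\alpha|k|}\sim n^{2\alpha A}$, which does not reduce to $O(1)$. The paper fixes this by a two-case split: for $|\xi|\leq n^{1/6}$ the identity above gives H\"older norm $\lesssim 1$; for $n^{1/6}<|\xi|\leq\xi_0\sqrt n$ the crude bound $n^{\alpha A}\leq n^{1/6}$ is compensated by $1/|\xi|\leq n^{-1/6}$ in the Berry--Esseen integral. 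Your approach would go through with the same split after taking $2\alpha A\leq 1/6$, so this is a gap in the exposition rather than in the method.
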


We now begin the proof of Proposition \ref{prop:BE-coeff-1-varphi}. We can assume that  ${1\over 2}\leq \varphi\leq 2$,  $\oN_0 \varphi=1$ and $\norm{\varphi}_{\Cc^\alpha}\leq 2$, since the problem is linear on $\varphi$ and these functions span the space $\Cc^\alpha(\P^{d-1})$. 
\vskip 5pt

Fix a large constant $A>1$. From Corollary \ref{cor:LDT-admissible},
\begin{equation} \label{A-large-ineq}
\mu^{*n} \big\{g\in G:\,  |u(gx)| \geq  A \log n \big\} \lesssim 1 /\sqrt n.
\end{equation}
In other words, $|u(S_n x)| \geq  A \log n$ with probability smaller a constant times $1 /\sqrt n$. In particular, \eqref{eq:E_n-R} for $\psi=\mathbf 1$ will follow if we show that
\begin{equation}\label{goal-1-varphi}
\big| \oL_n^b(\varphi)  -  H(b) \big| \lesssim \frac{1}{\sqrt n},
\end{equation}
where
$$\oL_n^b(\varphi):=  \mathbf E \Big(  \mathbf 1_{{\sigma(S_n,x) + u(S_n x) - n \gamma \over \sqrt n}\leq b}   \mathbf 1_{  |u(S_n x)| \leq  A \log n  } \,  \varphi(S_n x)\Big)$$
and
$$H(b):=\oR\big(\mathbf 1_{(-\infty,b]}\varphi\big) = \frac{1}{\sqrt{2 \pi} \, \varrho} \int_{\P^{d-1}} \varphi\,\diff\nu \int_{-\infty}^b e^{-\frac{t^2}{2 \varrho^2}} \, \diff t=\frac{1}{\sqrt{2 \pi}  \,\varrho}  \int_{-\infty}^b e^{-\frac{t^2}{2 \varrho^2}} \, \diff t$$
is  the c.d.f.\ of the normal distribution $\cali N(0;\varrho^2)$. Recall that $\varphi$ is bounded and $\oN_0\varphi= \int_{\P^{d-1}} \varphi \, \diff \nu = 1$.   

Let $\chi_k$, $k \in \Z$, be the functions from Lemma \ref{lemma:partition-of-unity}. Then, for $w\in\P^{d-1}$,
$$ \sum_{|k| \leq A\log n-1}(\chi_k\varphi)(w)\leq \mathbf 1_{  |u(w)| \leq  A \log n  } \cdot \varphi(w) \leq  \sum_{|k| \leq A\log n+1}(\chi_k\varphi)(w). $$
Moreover, $\chi_k(w)$ is non-zero only when $-k-1 < u(w) < -k+1$. Thus, 
\begin{align}
\sum_{|k| \leq A\log n-1} \mathbf E\Big(  \mathbf 1_{{\sigma(S_n,x) - n \gamma  - k+1\over \sqrt n}\leq b} \, (\chi_k\varphi)(S_n x) \Big)&\leq \oL_n^b(\varphi)  \nonumber\\
&\leq \sum_{|k| \leq A\log n+1} \mathbf E\Big(  \mathbf 1_{{\sigma(S_n,x) - n \gamma  - k-1\over \sqrt n}\leq b} \, (\chi_k\varphi)(S_n x) \Big). \label{eq:Ln-two-sided-bound}
\end{align}

Define the functions  $\Phi_{n}^{\star}$ and $\Phi_{n,\xi}$ on $\P^{d-1}$ by
\begin{equation} \label{eq:Phi-star-def}
\Phi_{n}^{\star}  (w):= \varphi(w) - \sum_{|k| \leq A\log n} (\chi_k\varphi)(w)
\end{equation}
and
\begin{equation} \label{eq:Phi-xi-def}
\Phi_{n,\xi} (w):= \sum_{|k| \leq A\log n}  e^{i \xi{k \over \sqrt n}}(\chi_k\varphi)(w),
\end{equation}
where $n\geq 1$ and $\xi\in\R$.

Let $d_{n,x}:=\big(  \oP_0^n\varphi(x) \big)^{-1}$, which will be used as a normalization factor. Recall that ${1\over 2}\leq \varphi\leq 2$ by assumption.  By  Proposition \ref{prop:spectral-decomp}-(5), there exist constants $c>0$ and $0<\beta<1$ such that 
$\|\oQ_0^n \|_{\Cc^\alpha}\leq c\beta^n$. Therefore, 
$$\big|\oP_0^n\varphi(x)-\oN_0  \varphi\big| =\big|\oQ_0^n \varphi(x)\big|\leq c\beta^n \norm{\varphi}_{\Cc^\alpha} \leq 2c \beta^n,$$ so, since $\oN_0 \varphi =1$,
$${1\over 2}\leq d_{n,x}\leq 2  \quad\text{and}\quad |d_{n,x} -1| \leq 4c\beta^n \quad\text{for } n \text{ large enough}.$$

For $n \geq 1$, let
$$F_n(b):=  d_{n,x} \sum_{|k| \leq A\log n} \mathbf E\Big(  \mathbf 1_{{\sigma(S_n,x) - n \gamma  - k\over \sqrt n}\leq b} \, (\chi_k \varphi)(S_n x) \Big) +d_{n,x}\mathbf E\Big(  \mathbf 1_{{\sigma(S_n,x) - n \gamma  \over \sqrt n}\leq b} \, \Phi_{n}^{\star}(S_n x) \Big).$$
Then, $F_n$ is a  non-decreasing, right-continuous functions with $F_n(-\infty)=0$ and
$$F_n(\infty)= d_{n,x} \E(\varphi(S_n x)) = d_{n,x} \oP_0^n \varphi(x) =1.$$
Therefore, it is the c.d.f.\ of some probability distribution.

Recall that $H(b)$ is the c.d.f.\ of the normal distribution $\cali N(0;\varrho^2)$. Then, $H(-\infty)=0,H(\infty)=1$ and the derivative $h$ of $H$ satisfies $h(b)={1\over \sqrt{2\pi}\rho} e^{-{b^2 \over 2\rho^2}}$ and  $\widehat h(\xi)=e^{-{\varrho^2 \xi^2\over 2}}$. In particular, $h$ is bounded by $1/(\sqrt {2\pi}\varrho)$.

The following results correspond to Lemmas 3.4 to 3.7 in \cite{DKW:BE-LLT-coeff} when $\varphi = \mathbf 1$ and $u(x) = \log d(x,H_y)$. The proofs given there can be easily adapted to our case. In particular, the usual large deviation estimates for $d(S_n x,H_y)$ used there should be replaced by Corollary \ref{cor:LDT-admissible}. Recall that ${1\over 2}\leq \varphi\leq 2$ by assumption.

\begin{lemma} \label{lemma:Ln-Fn}
	Let $\oL_n^b$ and $F_n$ be as above. Then, there exists a constant $C>0$ independent of $n$, such that for all $n \geq 1$ and $b \in \R$, 
	$$   F_n\Big(b - {1 \over \sqrt n}\Big)  - {C \over \sqrt n} \leq \oL_n^b(\varphi) \leq   F_n\Big(b + {1 \over \sqrt n}\Big)  + {C \over \sqrt n}.$$
\end{lemma}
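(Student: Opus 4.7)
The plan is to deduce the sandwich inequality directly from the two-sided bound \eqref{eq:Ln-two-sided-bound}, which the partition-of-unity support condition has already produced. Comparing the middle term $\oL_n^b(\varphi)$ with the definition of $F_n(b\pm 1/\sqrt n)$, three sources of discrepancy must be controlled: (i) the index sets differ by at most one or two boundary values (the RHS of \eqref{eq:Ln-two-sided-bound} runs over $|k|\leq A\log n+1$ while $F_n$ sums over $|k|\leq A\log n$); (ii) the residual term $d_{n,x}\mathbf E(\mathbf 1_{\cdots}\Phi_n^\star(S_nx))$ is present in $F_n$ but not in \eqref{eq:Ln-two-sided-bound}; and (iii) the multiplicative normalization $d_{n,x}$ differs from $1$ by $O(\beta^n)$.

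For the upper bound I would start from the RHS of \eqref{eq:Ln-two-sided-bound} and split the sum as $\sum_{|k|\leq A\log n}+\sum_{|k|=\lfloor A\log n\rfloor +1}$. The boundary terms have $\chi_k$ supported on $\{|u|>A\log n - 1\}$, and since $\|\varphi\|_\infty\leq 2$, Corollary \ref{cor:LDT-admissible} bounds their total contribution by $C/n$ for $A$ chosen large enough. The remaining sum is identified, via the definition of $F_n$, with
$$d_{n,x}^{-1}F_n(b+1/\sqrt n) - \mathbf E\Big(\mathbf 1_{(\sigma(S_n,x)-n\gamma)/\sqrt n\leq b+1/\sqrt n}\Phi_n^\star(S_nx)\Big).$$
The last expectation is bounded in absolute value by $\|\varphi\|_\infty\,\mathbf P(|u(S_nx)|\geq A\log n - 1)=O(1/n)$, again by Corollary \ref{cor:LDT-admissible} together with the observation that on $\P^{d-1}\setminus\Sigma_u$ one has $\Phi_n^\star(w)=\varphi(w)\sum_{|k|>A\log n}\chi_k(w)$, which is supported on $\{|u|>A\log n-1\}$. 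Finally, $|d_{n,x}^{-1}-1|\leq c\beta^n$ together with $F_n\leq 1$ turns the normalization discrepancy into an $O(\beta^n)$ error. Summing these estimates yields $\oL_n^b(\varphi)\leq F_n(b+1/\sqrt n)+C/\sqrt n$.

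The lower bound is entirely symmetric: one starts from the LHS of \eqref{eq:Ln-two-sided-bound}, completes the sum to $|k|\leq A\log n$ at cost $O(1/n)$, subtracts the $\Phi_n^\star$ term at cost $O(1/n)$, and absorbs the factor $d_{n,x}$ at cost $O(\beta^n)$. The only delicate point, and what I expect to be the main bookkeeping obstacle, is keeping track of the precise shifts by $\pm 1$ in the indicator conditions and making sure that the tail contribution genuinely amounts to $O(1/n)$ rather than $O(\log n/n)$: the number of boundary indices is $O(1)$, not $O(\log n)$, which is crucial. Provided $A$ is chosen large enough in Corollary \ref{cor:LDT-admissible} so that $\mu^{*n}\{|u(\cdot\, x)|\geq A\log n\}\leq C/n$, all three error contributions are comfortably smaller than the target $C/\sqrt n$, concluding the proof.
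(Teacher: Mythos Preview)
Your proposal is correct and follows essentially the same approach as the paper. The paper itself defers the proof to \cite[Lemma 3.4]{DKW:BE-LLT-coeff}, noting only that the large deviation estimates there should be replaced by Corollary \ref{cor:LDT-admissible}; your argument is precisely this adaptation, starting from \eqref{eq:Ln-two-sided-bound} and controlling the three discrepancies (boundary indices, the $\Phi_n^\star$ term, and the factor $d_{n,x}$) exactly as one would expect.
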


\begin{lemma} \label{lemma:char-function-Fn}
	The conjugate characteristic function of $F_n$ is given by
	$$\phi_{F_n}(\xi)=  d_{n,x} e^{i\xi\sqrt n \gamma} \oP_{-{i\xi\over \sqrt n}}^n \big( \Phi_{n,\xi} +  \Phi_{n}^{\star} \big)  (x).$$
	In particular,  $\phi_{  F_n}$ is differentiable near zero.
\end{lemma}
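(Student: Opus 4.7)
\medskip

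\noindent\textbf{Proof plan for Lemma \ref{lemma:char-function-Fn}.}
The plan is a direct computation from the definition $\phi_{F_n}(\xi)=\int_\R e^{-i\xi b}\,\diff F_n(b)$ together with the integral representation \eqref{eq:markov-op-iterate} of the iterated transfer operator. The function $F_n$ is by construction a finite weighted sum of expectations of indicator functions $\mathbf 1_{Y\le b}$, where $Y$ is one of the random variables $\frac{\sigma(S_n,x)-n\gamma-k}{\sqrt n}$ (for $|k|\le A\log n$) or $\frac{\sigma(S_n,x)-n\gamma}{\sqrt n}$. By Fubini, integrating $e^{-i\xi b}$ against $\diff\mathbf 1_{Y\le b}$ produces $e^{-i\xi Y}$, so the characteristic function is
$$
\phi_{F_n}(\xi)=d_{n,x}\!\!\!\sum_{|k|\le A\log n}\!\!\!\mathbf E\Bigl(e^{-i\xi\frac{\sigma(S_n,x)-n\gamma-k}{\sqrt n}}(\chi_k\varphi)(S_nx)\Bigr)+d_{n,x}\,\mathbf E\Bigl(e^{-i\xi\frac{\sigma(S_n,x)-n\gamma}{\sqrt n}}\Phi_n^\star(S_nx)\Bigr).
$$

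Next, I would pull the deterministic factor $e^{i\xi\sqrt n\gamma}$ out of every term and, inside the sum, group the $\xi$-dependent factor $e^{i\xi k/\sqrt n}$ with $\chi_k\varphi$. This identifies the combination $\sum_{|k|\le A\log n}e^{i\xi k/\sqrt n}\chi_k\varphi+\Phi_n^\star$ appearing in \eqref{eq:Phi-xi-def} and \eqref{eq:Phi-star-def} as exactly $\Phi_{n,\xi}+\Phi_n^\star$. The remaining expectations all have the form $\mathbf E\bigl(e^{z\sigma(S_n,x)}\psi(S_nx)\bigr)$ with $z=-i\xi/\sqrt n$, which by \eqref{eq:markov-op-iterate} equals $\oP_z^n\psi(x)$. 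Applying this to $\psi=\chi_k\varphi$ and $\psi=\Phi_n^\star$ and using linearity of $\oP^n_{-i\xi/\sqrt n}$ to bring the sum inside the operator yields the claimed identity
$$
\phi_{F_n}(\xi)=d_{n,x}\,e^{i\xi\sqrt n\gamma}\,\oP^n_{-i\xi/\sqrt n}\bigl(\Phi_{n,\xi}+\Phi_n^\star\bigr)(x).
$$

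For the differentiability statement near $\xi=0$, I would use Proposition \ref{prop:spectral-decomp}, which asserts that the map $z\mapsto\oP_z$ is analytic in a neighborhood of $0$ as an operator on $\Cc^\alpha(\P^{d-1})$. Since $\Phi_{n,\xi}$ is a finite linear combination, with entire coefficients $e^{i\xi k/\sqrt n}$ in $\xi$, of fixed $\Cc^\alpha$ functions $\chi_k\varphi$ (with $n$ fixed), and $\Phi_n^\star$ is independent of $\xi$, the map $\xi\mapsto\Phi_{n,\xi}+\Phi_n^\star$ is entire with values in $\Cc^\alpha(\P^{d-1})$. Composing with the analytic operator-valued map $\xi\mapsto\oP^n_{-i\xi/\sqrt n}$ and evaluating at $x$, and multiplying by the entire prefactor $d_{n,x}e^{i\xi\sqrt n\gamma}$, gives a function of $\xi$ that is analytic (hence differentiable) on a neighborhood of $0$. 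I do not anticipate any genuine obstacle here — the proof is essentially an unwinding of definitions — so the main care is just bookkeeping the sign conventions for the \emph{conjugate} characteristic function so that the perturbation index in $\oP_z$ comes out as $-i\xi/\sqrt n$.
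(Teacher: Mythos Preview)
Your proposal is correct and follows exactly the natural approach: unwind the definition of $F_n$, use Fubini to convert each $\mathbf E(\mathbf 1_{Y\le b}\,W)$ term into $\mathbf E(e^{-i\xi Y}W)$, factor out $e^{i\xi\sqrt n\gamma}$, regroup via \eqref{eq:Phi-xi-def}, and invoke \eqref{eq:markov-op-iterate}. The paper itself does not spell out a proof here but defers to \cite[Lemmas~3.4--3.7]{DKW:BE-LLT-coeff}, where the same direct computation is carried out; your argument for differentiability via the analyticity in Proposition~\ref{prop:spectral-decomp} and the entire dependence of $\Phi_{n,\xi}$ on $\xi$ is likewise the intended one.
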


\begin{lemma}  \label{lemma:norm-Phi}
	Let $\Phi_{n,\xi}, \Phi_{n}^{\star}$ be the functions on $\P^{d-1}$ defined above.  Then,  the following identity holds
	\begin{equation} \label{eq:psi_xi+psi_T}
	\Phi_{n,\xi} + \Phi_{n}^{\star} = \mathbf \varphi + \sum_{|k| \leq A\log n} \big(  e^{i \xi{k \over \sqrt n}} - 1 \big) \chi_k\varphi.
	\end{equation}
	Moreover,  for every exponent $0<\alpha\leq \alpha_*$, we have $\norm{\chi_k\varphi}_{\Cc^\alpha}\lesssim e^{\alpha |k|}\norm{\varphi}_{\Cc^\alpha}$ and there  is a constant $C>0$ independent of $\xi$ and $n$ such that   
	\begin{equation}  \label{eq:norm-Phi}
	\norm{\Phi_{n,\xi} }_{\Cc^\alpha}\leq C \,  n^{\alpha A} \quad\text{and}\quad  \norm{\Phi_{n}^{\star}  }_{\Cc^\alpha} \leq C \,  n^{\alpha A}.
	\end{equation}
	In addition,  $\Phi_{n}^{\star}  $ is supported by $\big\{|u| \geq A\log n -1\big\}$.
\end{lemma}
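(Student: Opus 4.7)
The identity \eqref{eq:psi_xi+psi_T} is just bookkeeping: substituting the definitions \eqref{eq:Phi-xi-def} and \eqref{eq:Phi-star-def} into $\Phi_{n,\xi}+\Phi_n^\star$ collapses the sum $\sum_{|k|\leq A\log n}(\chi_k\varphi)$ from $\Phi_n^\star$ against the identical sum (with unit weights) hidden in $\Phi_{n,\xi}$ after pulling out a $1$ from each exponential, leaving exactly $\varphi + \sum_{|k|\leq A\log n}(e^{i\xi k/\sqrt n}-1)\chi_k\varphi$. The plan is to write this out in one line and move on.

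The auxiliary estimate $\norm{\chi_k\varphi}_{\Cc^\alpha}\lesssim e^{\alpha|k|}\norm{\varphi}_{\Cc^\alpha}$ is a direct consequence of the product rule $\norm{fg}_{\Cc^\alpha}\leq \norm{f}_\infty\norm{g}_{\Cc^\alpha}+\norm{g}_\infty\norm{f}_{\Cc^\alpha}$ combined with $0\leq \chi_k\leq 1$ and Lemma \ref{lemma:partition-of-unity}(4). From this, the triangle inequality together with $|e^{i\xi k/\sqrt n}|=1$ gives
\[
\norm{\Phi_{n,\xi}}_{\Cc^\alpha}\leq \sum_{|k|\leq A\log n}\norm{\chi_k\varphi}_{\Cc^\alpha}\lesssim \norm{\varphi}_{\Cc^\alpha}\sum_{|k|\leq A\log n}e^{\alpha|k|}\lesssim e^{\alpha A\log n}=n^{\alpha A},
\]
and the analogous bound for $\Phi_n^\star=\varphi-\sum_{|k|\leq A\log n}\chi_k\varphi$ follows by the same triangle inequality (absorbing the extra $\norm{\varphi}_{\Cc^\alpha}$ term into the larger sum).

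For the support claim, I would argue pointwise. If $w\in\P^{d-1}$ satisfies $|u(w)|<A\log n-1$, then any $k$ with $\chi_k(w)\neq 0$ obeys $|u(w)+k|<1$ by Lemma \ref{lemma:partition-of-unity}(1), forcing $|k|\leq|u(w)|+1<A\log n$. Hence every nonzero term of $\sum_{k\in\Z}\chi_k(w)$ already appears in $\sum_{|k|\leq A\log n}\chi_k(w)$, and Lemma \ref{lemma:partition-of-unity}(3) (noting $\Sigma_u\subset\{|u|=\infty\}$ is disjoint from the set under consideration) gives $\sum_{|k|\leq A\log n}\chi_k(w)=1$, so $\Phi_n^\star(w)=0$. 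I expect no real obstacle here: the whole lemma is essentially a careful verification that the partition of unity constructed in Lemma \ref{lemma:partition-of-unity} plays well with multiplication by $\varphi$, with the only mildly delicate point being the localization argument for the support of $\Phi_n^\star$ via the overlap structure of the $\Tc_k^u$.
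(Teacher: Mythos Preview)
Your proof is correct and follows the natural direct verification that the paper has in mind; the paper itself does not spell out the argument but refers to the analogous Lemmas 3.4--3.7 in \cite{DKW:BE-LLT-coeff}, and your write-up is precisely the adaptation to general $\varphi$ and $u$ that the authors say ``can be easily'' carried out. The product-rule bound on $\norm{\chi_k\varphi}_{\Cc^\alpha}$, the geometric summation, and the pointwise localization via $|k|\leq |u(w)+k|+|u(w)|<1+|u(w)|$ are all exactly what is needed.
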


Let $\xi_0$ be a small constant satisfying Lemma \ref{lemma:lambda-estimates}.

\begin{lemma} \label{lemma:F_n-estimate}
	Let $F_n$ and $H$ be as above. Then, $\big| F_n(b)-H(b)\big|\lesssim 1/\sqrt n$ for $|b|\geq \xi_0 \sqrt n$.
\end{lemma}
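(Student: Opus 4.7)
The plan is to use the fact that for $|b| \geq \xi_0\sqrt n$, both the Gaussian tail $H(b)$ (or $1-H(b)$) and the quantity $F_n(b)$ (or $1-F_n(b)$) are exponentially small in $n$, which is more than enough to beat the desired bound $1/\sqrt n$. So rather than a delicate comparison, I would bound the two quantities separately.

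First, observe that $H$ is the c.d.f.\ of $\cali N(0;\varrho^2)$, so for $b\leq -\xi_0\sqrt n$ we have $H(b) \leq e^{-\xi_0^2 n/(2\varrho^2)}$, and for $b\geq \xi_0\sqrt n$ we have $1-H(b)\leq e^{-\xi_0^2 n/(2\varrho^2)}$. Both are much smaller than $1/\sqrt n$, so it suffices to show that $F_n(b)\lesssim 1/\sqrt n$ in the first case, and that $1-F_n(b)\lesssim 1/\sqrt n$ in the second case.

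Next I would treat $b\leq -\xi_0\sqrt n$. Recall from the definition that
\[
F_n(b)=d_{n,x}\sum_{|k|\leq A\log n}\mathbf E\Big(\mathbf 1_{\sigma(S_n,x)-n\gamma -k\leq b\sqrt n}\,(\chi_k\varphi)(S_nx)\Big)+d_{n,x}\mathbf E\Big(\mathbf 1_{\sigma(S_n,x)-n\gamma\leq b\sqrt n}\,\Phi_n^\star(S_nx)\Big).
\]
On the indicator event inside the sum, $\sigma(S_n,x)-n\gamma\leq b\sqrt n+k\leq -\xi_0 n+A\log n\leq -\xi_0 n/2$ for $n$ large. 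Proposition \ref{prop:BQLDT} with $\varepsilon=\xi_0/2$ then gives probability at most $e^{-cn}$, and since each $\chi_k\varphi$ is bounded and there are $O(\log n)$ terms with $d_{n,x}\leq 2$, the whole sum is bounded by $(\log n)\, e^{-cn}$. For the $\Phi_n^\star$ term, use Lemma \ref{lemma:norm-Phi}: $\Phi_n^\star$ is supported on $\{|u|\geq A\log n-1\}$ and bounded by $\norm\varphi_\infty\leq 2$, so Corollary \ref{cor:LDT-admissible} (with $A$ large enough) bounds it by $C/n$. Thus $F_n(b)\lesssim 1/n+(\log n)e^{-cn}\lesssim 1/\sqrt n$.

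The case $b\geq \xi_0\sqrt n$ is entirely symmetric: using $F_n(\infty)=1$, write
\[
1-F_n(b)=d_{n,x}\sum_{|k|\leq A\log n}\mathbf E\Big(\mathbf 1_{\sigma(S_n,x)-n\gamma -k> b\sqrt n}\,(\chi_k\varphi)(S_nx)\Big)+d_{n,x}\mathbf E\Big(\mathbf 1_{\sigma(S_n,x)-n\gamma> b\sqrt n}\,\Phi_n^\star(S_nx)\Big),
\]
and observe that now $\sigma(S_n,x)-n\gamma> b\sqrt n+k\geq \xi_0 n/2$ inside each indicator of the sum, so the same large-deviation and admissibility estimates give $1-F_n(b)\lesssim 1/\sqrt n$. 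The triangle inequality in both cases concludes. There is no real obstacle here: the only mildly subtle point is that the $\Phi_n^\star$-term is controlled not by a deviation estimate on $\sigma$ but by the tail bound on $u$ from Corollary \ref{cor:LDT-admissible}, which is why the choice $A$ large was built into the construction.
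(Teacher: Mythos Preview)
Your argument is correct and follows exactly the approach the paper indicates: the paper does not spell out a proof here but refers to \cite{DKW:BE-LLT-coeff} and notes that the large deviation estimates for $d(S_nx,H_y)$ there should be replaced by Corollary~\ref{cor:LDT-admissible}, which is precisely how you handle the $\Phi_n^\star$ term, while the sum is controlled via Proposition~\ref{prop:BQLDT} as expected. One cosmetic remark: in the symmetric case you need $b\sqrt n + k \geq \xi_0 n - A\log n \geq \xi_0 n/2$ (since $k\geq -A\log n$), which is what you intend.
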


From now on, we fix $0 < \alpha <1$ small enough so that
$$\alpha A\leq 1/6 \quad \text{ and } \quad \alpha \leq \alpha_0,$$ where $0<\alpha_0<1$ is the exponent appearing in Theorem \ref{thm:spectral-gap}. The reason for this choice will be more clear below. From the results of Subsection \ref{subsec:markov-op}, the family $\xi\mapsto\oP_{i\xi}$ acts continuously on $\Cc^\alpha(\P^{d-1})$ for $\xi \in \R$, it is analytic near $0$ and $\oP_0$ has a spectral gap. 

Lemmas \ref{lemma:char-function-Fn} and \ref{lemma:F_n-estimate}  imply that $  F_n$ and $H$ satisfy the conditions of Lemma \ref{lemma:BE-feller} with $\delta_n:=(\xi_0 \sqrt n)^{-1/2}$.  Let $\kappa > 1$ be the constant appearing in that lemma. For simplicity, by taking a smaller $\xi_0$ is necessary, one can assume that $2\kappa \xi_0 \leq 1$.  Then,  Lemma \ref{lemma:BE-feller} gives that
\begin{equation} \label{eq:BE-main-estimate}
\sup_{b \in\R}\big|F_n(b)-H(b)\big|\leq  {1\over \pi} \sup_{|b|\leq \sqrt n}    \Big|  \int_{0}^{\xi_0\sqrt n} {\Theta_b(\xi) -\Theta_b(-\xi)\over \xi}    \,\diff \xi  \Big|+{C\over \sqrt n},
\end{equation}
where $C>0$ is a constant independent of $n$ and 
$$\Theta_b(\xi):=e^{ib\xi}\big( \phi_{  F_n}(\xi)- \widehat {h}(\xi) \big)\widehat{\vartheta_{\delta_n}}(\xi).$$

\medskip

We now estimate the integral in  \eqref{eq:BE-main-estimate}. In order to deal with $\phi_{F_n}$, we'll use Lemmas \ref{lemma:char-function-Fn} coupled with Proposition \ref{prop:spectral-decomp} and Lemma \ref{lemma:lambda-estimates}. Recall that $H(b)$ is the c.d.f.\ of the normal distribution $\cali N(0;\varrho^2)$ and, in the notation of Lemma \ref{lemma:BE-feller}, we have $h(s): =\frac{1}{\sqrt{2 \pi}  \,\varrho}   e^{-\frac{s^2}{2 \varrho^2}}$ and $\widehat h(\xi) =  e^{-{\varrho^2 \xi^2\over 2}}$. It is then natural to introduce the function
$$\widetilde h_n(\xi):= d_{n,x} e^{-{\varrho^2 \xi^2\over 2}}  \oN_0 (  \Phi_{n,\xi} +  \Phi_{n}^{\star} ).$$ Notice that for every fixed $n$ and $\xi$,  $\oN_0 (  \Phi_{n,\xi} +  \Phi_{n}^{\star} )$ is a constant independent of $x$.  Define also
$$\Theta_b^{(1)}(\xi):= e^{ib\xi}\big( \phi_{F_n}(\xi)- \widetilde {h}_n(\xi) \big)\widehat{\vartheta_{\delta_n}}(\xi) \quad\text{and} 
\quad \Theta_b^{(2)}(\xi):= e^{ib\xi}\big(  \widetilde {h}_n(\xi)-\widehat h(\xi) \big)\widehat{\vartheta_{\delta_n}}(\xi),$$
so that $\Theta_b =  \Theta_b^{(1)} + \Theta_b^{(2)}$.

\begin{lemma} \label{lemma:theta-1-bound} 
	We have
	$$\sup_{|b|\leq \sqrt n}    \Big|  \int_{0}^{\xi_0\sqrt n} {\Theta_b^{(1)}(\xi) -\Theta_b^{(1)}(-\xi)\over \xi}    \,\diff \xi  \Big| \lesssim {1 \over \sqrt n}.$$
\end{lemma}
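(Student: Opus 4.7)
The plan is to use the spectral decomposition $\oP_{-i\xi/\sqrt n}^n=\lambda_{-i\xi/\sqrt n}^n\oN_{-i\xi/\sqrt n}+\oQ_{-i\xi/\sqrt n}^n$ from Proposition \ref{prop:spectral-decomp}, inserted into the formula for $\phi_{F_n}$ provided by Lemma \ref{lemma:char-function-Fn}, in order to compare $\phi_{F_n}(\xi)$ with $\widetilde h_n(\xi)$. Writing $\Phi:=\Phi_{n,\xi}+\Phi_n^{\star}$ for brevity, this decomposes $d_{n,x}^{-1}\bigl(\phi_{F_n}(\xi)-\widetilde h_n(\xi)\bigr)$ as the sum
\begin{align*}
\bigl[e^{i\xi\sqrt n\gamma}\lambda_{-i\xi/\sqrt n}^n-e^{-\varrho^2\xi^2/2}\bigr]\oN_{-i\xi/\sqrt n}(\Phi)(x) &+ e^{-\varrho^2\xi^2/2}\bigl[\oN_{-i\xi/\sqrt n}(\Phi)(x)-\oN_0(\Phi)\bigr]\\
&+ e^{i\xi\sqrt n\gamma}\oQ_{-i\xi/\sqrt n}^n(\Phi)(x).
\end{align*}
I will estimate the three resulting contributions to $\int_0^{\xi_0\sqrt n}\bigl(\Theta_b^{(1)}(\xi)-\Theta_b^{(1)}(-\xi)\bigr)/\xi\,\diff\xi$ separately, using $|e^{ib\xi}\widehat{\vartheta_{\delta_n}}(\xi)|\leq 1$ throughout.

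The key ingredient is a sharp bound on $\oN_{-i\xi/\sqrt n}(\Phi)(x)$ that exploits the rank-one structure of $\oN_z$ (recall $\oN_0\varphi=\nu(\varphi)\cdot\mathbf 1$) together with the identity $\Phi=\varphi+\sum_{|k|\leq A\log n}(e^{i\xi k/\sqrt n}-1)\chi_k\varphi$ of Lemma \ref{lemma:norm-Phi}. Lemma \ref{lemma:partition-of-unity} gives $\|\chi_k\varphi\|_{\Cc^\alpha}\lesssim e^{\alpha|k|}$, while Property (1) of Definition \ref{def:admissible} yields $\nu(\chi_k\varphi)\leq\nu\{|u|\geq|k|-1\}\lesssim e^{-\eta_*|k|}$. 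Combining $|e^{i\xi k/\sqrt n}-1|\leq|\xi||k|/\sqrt n$ with the analyticity bound $\|\oN_z-\oN_0\|_{\Cc^\alpha\to\Cc^\alpha}\lesssim|z|$ near $z=0$ and summing the resulting elementary series produces
\[
\bigl|\oN_{-i\xi/\sqrt n}(\Phi)(x)\bigr|\lesssim 1+|\xi|/\sqrt n+(\xi^2/n)\,n^{\alpha A}\log n
\]
and likewise $\bigl|\oN_{-i\xi/\sqrt n}(\Phi)(x)-\oN_0(\Phi)\bigr|\lesssim|\xi|/\sqrt n+(\xi^2/n)\,n^{\alpha A}\log n$. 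Plugging these into the first two summands, using the Lemma \ref{lemma:lambda-estimates} bounds of order $|\xi|^3 e^{-\varrho^2\xi^2/2}/\sqrt n$ on $|\xi|\leq n^{1/6}$ and $e^{-\varrho^2\xi^2/4}/\sqrt n$ on $(n^{1/6},\xi_0\sqrt n]$, dividing by $|\xi|$, and integrating against the Gaussian tail gives a contribution of order $1/\sqrt n$; the correction pieces involving $n^{\alpha A}\log n$ stay sub-dominant thanks to $\alpha A\leq 1/6$. The third summand is exponentially small since Proposition \ref{prop:spectral-decomp}(5) gives $\|\oQ_{-i\xi/\sqrt n}^n\|_{\Cc^\alpha}\leq c\bigl((1+2\rho)/3\bigr)^n$ and $\|\Phi\|_{\Cc^\alpha}\lesssim n^{\alpha A}$. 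The apparent singularity at $\xi=0$ is neutralised by the Cauchy principal value form $(\Theta_b^{(1)}(\xi)-\Theta_b^{(1)}(-\xi))/\xi$: $\Theta_b^{(1)}$ is smooth in $\xi$, so its symmetric difference vanishes linearly at the origin and the quotient remains bounded.

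The main obstacle is that the Hölder norm $\|\Phi\|_{\Cc^\alpha}\lesssim n^{\alpha A}$ would, if used naively in $\oN_{-i\xi/\sqrt n}(\Phi)(x)$, yield a rate only of order $n^{\alpha A}/\sqrt n$, which is too weak. Overcoming this requires precisely the two-stage cancellation sketched above: the zeroth-order (in $z=-i\xi/\sqrt n$) part of $\oN_z(\Phi)$ integrates each $\chi_k\varphi$ against $\nu$ and therefore benefits from the exponentially small mass $\nu(\chi_k\varphi)\lesssim e^{-\eta_*|k|}$ built into admissibility, while the higher-order parts each pick up an extra factor of $|\xi|/\sqrt n$ either from the Taylor expansion of $\oN_z$ or from $e^{i\xi k/\sqrt n}-1$. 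Only the mixed second-order term interacts with the full Hölder growth $e^{\alpha|k|}$, and the small exponent $\alpha A\leq 1/6$ keeps that residual error safely below the $1/\sqrt n$ target.
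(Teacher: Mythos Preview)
Your approach is essentially the same as the paper's: both insert the spectral decomposition $\oP_z^n=\lambda_z^n\oN_z+\oQ_z^n$ into $\phi_{F_n}$, exploit the identity $\Phi_{n,\xi}+\Phi_n^\star=\varphi+\sum(e^{i\xi k/\sqrt n}-1)\chi_k\varphi$ together with $\nu(\chi_k\varphi)\lesssim e^{-\eta_*|k|}$ to control the $\oN$-piece, use Lemma~\ref{lemma:lambda-estimates} on the eigenvalue factor, and rely on the exponential decay of $\oQ_z^n$ plus $\alpha A\leq 1/6$ for the remainder. The bookkeeping differs slightly: the paper groups the eigenvalue part as $\lambda^n\oN_z-e^{-\varrho^2\xi^2/2}\oN_0$ and then case-splits on $|\xi|\lessgtr n^{1/6}$, whereas you factor it as $(\lambda^n-e^{-\varrho^2\xi^2/2})\oN_z+e^{-\varrho^2\xi^2/2}(\oN_z-\oN_0)$ and carry a single refined bound on $\oN_z(\Phi)$; and the paper splits the $\oQ$-piece into $\oQ_z^n-\oQ_0^n$ (vanishing at $\xi=0$) and $\oQ_0^n$ (handled by the mean value theorem), while you keep $\oQ_z^n$ in one block and invoke the principal-value form globally.

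One point in your write-up deserves a little more care. Saying the $\oQ$-contribution is ``exponentially small'' and that the symmetric quotient ``remains bounded'' is not by itself enough: you must integrate that quotient over $[0,\xi_0\sqrt n]$, and the derivative of the full integrand $e^{ib\xi}\widehat{\vartheta_{\delta_n}}(\xi)e^{i\xi\sqrt n\gamma}\oQ_{-i\xi/\sqrt n}^n(\Phi_{n,\xi}+\Phi_n^\star)(x)$ picks up factors of order $\sqrt n$ from $e^{ib\xi}$ (since $|b|\leq\sqrt n$) and from $e^{i\xi\sqrt n\gamma}$, as well as the $\xi$-dependence of $\Phi_{n,\xi}$. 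The paper makes this explicit for its $\Lambda_3$ term. The resulting polynomial growth $n\cdot n^{\alpha A}$ is of course still killed by $\beta^n$, so the argument goes through, but you should state this rather than leave it implicit.
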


\begin{proof}
	Using Lemma \ref{lemma:char-function-Fn}, the decomposition of $\oP_z$ from Proposition \ref{prop:spectral-decomp},  we have
	$$\Theta_b^{(1)}=d_{n,x}\big(\Lambda_1+\Lambda_2+\Lambda_3\big),$$
	where 
	$$ \Lambda_1(\xi):= e^{ib\xi}\Big(  e^{i\xi\sqrt n \gamma} \lambda_{-{i\xi\over \sqrt n}}^n \oN_{-{i\xi\over \sqrt n}} (\Phi_{n,\xi} +\Phi_{n}^{\star}  ) (x) -      e^{-{\varrho^2 \xi^2\over 2}}   \oN_0 (\Phi_{n,\xi} +\Phi_{n}^{\star}  )  (x)   \Big) \widehat{\vartheta_{\delta_n}}(\xi), $$ 
	$$\Lambda_2(\xi):=e^{ib\xi}\Big(  e^{i\xi\sqrt n \gamma} \oQ_{-{i\xi\over \sqrt n}}^n  (\Phi_{n,\xi} +\Phi_{n}^{\star}  )(x) -e^{i\xi\sqrt n \gamma}\oQ_0^n  (\Phi_{n,\xi} +\Phi_{n}^{\star}  )(x)  \Big)\widehat{\vartheta_{\delta_n}}(\xi)$$
	and
	$$ \Lambda_3(\xi):=e^{ib\xi}e^{i\xi\sqrt n \gamma}\oQ_0^n  (\Phi_{n,\xi} +\Phi_{n}^{\star}  )(x)  \,\widehat{\vartheta_{\delta_n}}(\xi).$$

Recall that $\frac12 \leq d_{n,x} \leq2$, so it enough to estimate the integrals of	${\Lambda_j(\xi) -\Lambda_j(-\xi)\over \xi}$ for $j=1,2,3$. 	Notice that $\Lambda_1(0)=\Lambda_2(0)=0$, so  for $j=1,2$, we have 
	$$\int_{0}^{\xi_0\sqrt n} {\Lambda_j(\xi) -\Lambda_j(-\xi)\over \xi}    \,\diff \xi = \int_{-\xi_0\sqrt n}^{\xi_0\sqrt n} {\Lambda_j(\xi)\over \xi}    \,\diff \xi.$$

	In order to estimate $\Lambda_2$, observe that, for $z$ small, the norm of the operator $\oQ^n_z - \oQ^n_0$ is bounded by a constant times $|z| n \beta^n$ for some $0<\beta<1$. This can be seen by writing the last difference as $\sum_{\ell=0}^{n-1}  \oQ_z^{n-\ell-1}(\oQ_z - \oQ_0)\oQ_0^\ell$, applying Proposition \ref{prop:spectral-decomp}-(5) and using the facts that $\norm{\oQ_z}_{\Cc^\alpha} \lesssim \beta^n$ and $\norm{\oQ_z - \oQ_0}_{\Cc^\alpha} \lesssim |z|$.   Therefore, we have 
	$$  \Big|  \oQ_{-{i\xi\over \sqrt n}}^n  (\Phi_{n,\xi} +\Phi_{n}^{\star}  )(x) - \oQ_0^n (\Phi_{n,\xi} +\Phi_{n}^{\star}  )(x) \Big| \lesssim {|\xi|\over \sqrt n} n \beta^n \norm{ \Phi_{n,\xi} +\Phi_{n}^{\star}  }_{\Cc^\alpha}.$$
	Using \eqref{eq:norm-Phi}, we get that
	\begin{align*}
	\Big|\int_{-\xi_0\sqrt n}^{\xi_0\sqrt n} {\Lambda_2(\xi) \over \xi}    \,\diff \xi\Big|
	&\leq\int_{-\xi_0\sqrt n}^{\xi_0\sqrt n} {1\over |\xi|} \cdot \Big| \oQ_{-{i\xi\over \sqrt n}}^n  (\Phi_{n,\xi} +\Phi_{n}^{\star}  )(x) -  \oQ_0^n (\Phi_{n,\xi} +\Phi_{n}^{\star}  )(x) \Big|\,\diff\xi \\
	&\lesssim     \int_{-\xi_0\sqrt n}^{\xi_0\sqrt n} {1\over |\xi|} \cdot  |\xi| \sqrt n \beta^n \norm{ \Phi_{n,\xi} +\Phi_{n}^{\star}  }_{\Cc^\alpha} \,\diff\xi \lesssim \beta^n n^{\alpha A + 1} \lesssim { 1 \over \sqrt n}.     
	\end{align*}
	\vskip 3pt
	
	We now estimate $\Lambda_3$ using its derivative $\Lambda'_3$. Recall that $|\widehat{\vartheta_{\delta_n}}|\leq 1,\norm{\widehat{\vartheta_{\delta_n}}}_{\Cc^1}\lesssim 1,|b|\leq \sqrt n$ and  $\big|\oQ_0^n  (\Phi_{n,\xi} +\Phi_{n}^{\star}  )(x) \big|\lesssim \beta^n\norm{ \Phi_{n,\xi} +\Phi_{n}^{\star}  }_{\Cc^\alpha}$, where $0<\beta<1$ is as before. A direct computation using the definition of $\Phi_{n,\xi} $ gives for $|\xi|\leq \xi_0\sqrt n$,
	\begin{align*}
	\big| \Lambda_3'(\xi)\big| &\leq  \Big|b \oQ_0^n  (\Phi_{n,\xi} +\Phi_{n}^{\star}  )(x)\Big|+    \Big|\sqrt n \gamma \oQ_0^n  (\Phi_{n,\xi} +\Phi_{n}^{\star}  )(x)\Big|  \\ 
	& \quad\quad \quad\quad \quad\quad \quad + \sum_{|k| \leq A\log n} \Big|{k\over \sqrt n} e^{i \xi \frac{k}{\sqrt n}}\oQ_0^n (\chi_k\varphi) (x)    \Big|  + \Big|\oQ_0^n  (\Phi_{n,\xi} +\Phi_{n}^{\star}  )(x)\Big|\cdot \norm{\widehat{\vartheta_{\delta_n}}}_{\Cc^1}\\
	&\lesssim \sqrt n  \beta^n n^{\alpha A}  +\sum_{|k| \leq A\log n} {|k| \over \sqrt n} \beta^n n^{\alpha A}  +   \beta^n n^{\alpha A}   \\ &\lesssim \sqrt n  \beta^n n^{\alpha A}   + \frac{(\log n)^2}{\sqrt n} \beta^n n^{\alpha A}  +   \beta^n n^{\alpha A}    \lesssim (1+\sqrt n)  \beta^n n^{\alpha A},
	\end{align*}
	where in the second inequality we have used that $\norm{\chi_k\varphi}_{\Cc^\alpha} \lesssim e^{\alpha |k|} \norm{\varphi}_{\Cc^\alpha} \lesssim \, n^{\alpha A} $ and $\norm{ \Phi_{n,\xi} +\Phi_{n}^{\star}  }_{\Cc^\alpha} \lesssim n^{\alpha A}$, see Lemma \ref{lemma:norm-Phi}.
	Applying the mean value theorem to $\Lambda_3$ between $\xi$ and $-\xi$, yields
	\begin{align*}
	\Big|\int_{0}^{\xi_0\sqrt n} {\Lambda_3(\xi)-\Lambda_3(-\xi) \over \xi}    \,\diff \xi\Big|\leq 2\xi_0\sqrt n\sup_{|\xi|\leq \xi_0\sqrt n} \big| \Lambda_3'(\xi) \big|\lesssim \sqrt n (1+\sqrt n)  \beta^n n^{\alpha A}   \lesssim {1 \over \sqrt n}.
	\end{align*}
	\vskip 5pt
	
	It remains to estimate the term involving $\Lambda_1$. We have
	$$ \Big|\int_{-\xi_0\sqrt n}^{\xi_0\sqrt n} {\Lambda_1(\xi) \over \xi}   \diff \xi\Big| \leq \int_{-\xi_0\sqrt n}^{\xi_0\sqrt n} {1\over |\xi|} \cdot \Big|  e^{i\xi\sqrt n \gamma} \lambda_{-{i\xi\over \sqrt n}}^n \oN_{-{i\xi\over \sqrt n}} (\Phi_{n,\xi} +\Phi_{n}^{\star}  ) (x) -    e^{-{\varrho^2 \xi^2\over 2}}   \oN_0 (\Phi_{n,\xi} +\Phi_{n}^{\star}  )(x)    \Big|  \diff \xi.$$
	We split the last integral into two integrals using
	$$ \Gamma_1(\xi):=   e^{i\xi\sqrt n \gamma} \lambda_{-{i\xi\over \sqrt n}}^n \oN_{-{i\xi\over \sqrt n}}  (\Phi_{n,\xi} +\Phi_{n}^{\star}  ) (x) -      e^{i\xi\sqrt n \gamma} \lambda_{-{i\xi\over \sqrt n}}^n  \oN_0 (\Phi_{n,\xi} +\Phi_{n}^{\star}  )(x)   $$
	and
	$$  \Gamma_2(\xi):=   e^{i\xi\sqrt n \gamma} \lambda_{-{i\xi\over \sqrt n}}^n  \oN_0 (\Phi_{n,\xi} +\Phi_{n}^{\star}  ) (x) -      e^{-{\varrho^2 \xi^2\over 2}}   \oN_0 (\Phi_{n,\xi} +\Phi_{n}^{\star}  ) (x) .   $$
	\vskip 5pt
	
	\noindent\textbf{Case 1.}  $\sqrt[6] n<|\xi|\leq \xi_0\sqrt n$.    In this case, by Lemma \ref{lemma:lambda-estimates}, we have
	\begin{equation} \label{eq:case1-lambda}
	\big|\lambda_{-{i\xi\over \sqrt n}}^n\big|\leq e^{-{\varrho^2\xi^2\over 3}} \quad \text{and}\quad \Big| e^{i\xi\sqrt n \gamma} \lambda_{-{i\xi\over \sqrt n}}^n-e ^{-{\varrho^2\xi^2\over 2}}  \Big|\lesssim {1\over \sqrt n}e^{-{\varrho^2\xi^2\over 4}}.
	\end{equation}
	
	From the analyticity of $\xi\mapsto \oN_{i\xi}$ (cf. Proposition \ref{prop:spectral-decomp}), Lemma \ref{lemma:norm-Phi} and the fact that $\alpha A\leq 1/6$, one has 
	$$\Big\|\big(\oN_{-{i\xi\over \sqrt n}}-\oN_0 \big)  (\Phi_{n,\xi} +\Phi_{n}^{\star}  ) \Big\|_\infty \lesssim {|\xi| \over \sqrt n} \norm{ \Phi_{n,\xi} +\Phi_{n}^{\star}  }_{\Cc^\alpha}\lesssim {|\xi| \over \sqrt n}n^{\alpha A}   \leq  {|\xi| \over \sqrt n}\sqrt[6] n  . $$
	Hence, using \eqref{eq:case1-lambda}, we get
	$$\int_{\sqrt[6] n<|\xi|\leq \xi_0\sqrt n} {1\over |\xi|}\cdot \big| \Gamma_1(\xi) \big|  \,\diff \xi \lesssim \int_{- \infty}^{\infty} {1\over \sqrt[6] n} \cdot e^{-{\varrho^2\xi^2\over 3}} {|\xi| \over \sqrt n}\sqrt[6] n \norm{\varphi}_{\Cc^\alpha}  \,\diff \xi  \lesssim   { 1  \over \sqrt n}.$$
	
	Observe that $ \big| \Phi_{n,\xi} +\Phi_{n}^{\star} \big| \leq \Phi_{n,0} +\Phi_{n}^{\star} = \varphi$, see \eqref{eq:psi_xi+psi_T}. Then,  $\big|\oN_0  (\Phi_{n,\xi} +\Phi_{n}^{\star}  )\big| \leq\oN_0\varphi= 1$. Therefore, using \eqref{eq:case1-lambda}, we obtain
	$$\int_{\sqrt[6] n<|\xi|\leq \xi_0\sqrt n} {1\over |\xi|}\cdot \big| \Gamma_2(\xi) \big|  \,\diff \xi \lesssim \int_{-\xi_0\sqrt n}^{\xi_0\sqrt n} {1\over \sqrt[6] n} \cdot{1\over \sqrt n} e^{-{\varrho^2\xi^2\over 4}}    \,\diff \xi  \lesssim {  1\over \sqrt n}.$$
	\vskip 5pt
	
	\noindent\textbf{Case 2.} $|\xi|\leq \sqrt[6] n$.  In this case, by Lemma \ref{lemma:lambda-estimates}, we have
	\begin{equation} \label{eq:case2-lambda}
	\big|\lambda_{-{i\xi\over \sqrt n}}^n\big|\leq e^{-{\varrho^2\xi^2\over 3}} \quad \text{and}\quad \Big| e^{i\xi\sqrt n \gamma} \lambda_{-{i\xi\over \sqrt n}}^n-e ^{-{\varrho^2\xi^2\over 2}}  \Big|\lesssim {1\over \sqrt n}|\xi|^3e^{-{\varrho^2\xi^2\over 2}}.
	\end{equation}
	
	From \eqref{eq:psi_xi+psi_T}, it follows that $\norm{\Phi_{n,\xi} +\Phi_{n}^{\star}}_{\Cc^\alpha}$ is bounded by
	\begin{align*}
	\norm{\varphi}_{\Cc^\alpha} &+ \sum_{|k| \leq A\log n} \big|   e^{i \xi{k \over \sqrt n}}-1  \big|\cdot \norm{\chi_k\varphi}_{\Cc^\alpha}\lesssim \norm{\varphi}_{\Cc^\alpha} +  \sum_{|k| \leq A\log n} |\xi|{|k|\over \sqrt n} e^{\alpha |k|}\norm{\varphi}_{\Cc^\alpha} \\
	&\lesssim 1 +  {\sqrt[6] n (\log n)^2 n^{\alpha A}\over \sqrt n}  \lesssim  1  +  {(\log n)^2 \over \sqrt[6] n}  \lesssim 1, 
	\end{align*}
	where  we have used that $\norm{\chi_k\varphi}_{\Cc^\alpha} \lesssim e^{\alpha |k|} \norm{\varphi}_{\Cc^\alpha} \lesssim  n^{\alpha A}$ from Lemma \ref{lemma:norm-Phi} and the assumption that $\alpha A\leq 1/6$. It follows from the analyticity of $\xi\mapsto\oN_{i\xi}$ that
	$$  \Big\|\big(\oN_{-{i\xi\over \sqrt n}} -\oN_0\big)(\Phi_{n,\xi} +\Phi_{n}^{\star}  )  \Big\|_\infty  \lesssim {|\xi|\over \sqrt n}.$$
	We conclude, using \eqref{eq:case2-lambda}, that 
	$$\int_{|\xi|\leq\sqrt[6] n} {1\over |\xi|}\cdot \big| \Gamma_1(\xi) \big|  \,\diff \xi \lesssim \int_{|\xi|\leq\sqrt[6] n} {1\over |\xi|} \cdot  e^{-{\varrho^2\xi^2\over 3}} {|\xi|\over \sqrt n}   \,\diff \xi \lesssim { 1 \over \sqrt n}. $$
	
	For $\Gamma_2$, using that $\big|\oN_0  (\Phi_{n,\xi} +\Phi_{n}^{\star}  )\big| \leq 1$ as before together with  \eqref{eq:case2-lambda}, gives
	$$\int_{|\xi|\leq\sqrt[6] n} {1\over |\xi|}\cdot \big| \Gamma_2(\xi) \big|  \,\diff \xi \lesssim \int_{|\xi|\leq\sqrt[6] n} {1\over |\xi|}\cdot   {1\over \sqrt n}|\xi|^3e^{-{\varrho^2\xi^2\over 2}}  \,\diff \xi\lesssim {1 \over \sqrt n}.$$
	
	Together with Case 1, we deduce that 
	$$\Big|\int_{-\xi_0\sqrt n}^{\xi_0\sqrt n} {\Lambda_1(\xi) \over \xi}    \,\diff \xi\Big|  \lesssim {1 \over \sqrt n}.$$
	 The lemma follows.
\end{proof}

\begin{lemma} \label{lemma:theta-2-bound}
	We have
	$$\sup_{|b|\leq  \sqrt n}    \Big|  \int_{0}^{\xi_0\sqrt n} {\Theta_b^{(2)}(\xi)-\Theta_b^{(2)}(-\xi)  \over \xi}   \,\diff \xi  \Big| \lesssim {1 \over \sqrt n}.$$
\end{lemma}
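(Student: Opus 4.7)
The plan is to exploit the explicit formula for $\widetilde h_n(\xi) - \widehat h(\xi)$ that one obtains from \eqref{eq:psi_xi+psi_T} together with $\oN_0\varphi = 1$. Setting $a_k := \lp\nu,\chi_k\varphi\rp$, one has
$$\widetilde h_n(\xi) - \widehat h(\xi) = e^{-\varrho^2\xi^2/2}\Big[(d_{n,x}-1) + d_{n,x}\sum_{|k|\leq A\log n}(e^{i\xi k/\sqrt n}-1)\, a_k\Big].$$
Accordingly I would split $\Theta_b^{(2)} = \Theta_b^{(2,E)} + \Theta_b^{(2,M)}$ into an exponentially small ``error'' part (from $d_{n,x}-1$) and a ``main'' part (the $k$-sum). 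The guiding observation is that the antisymmetric combination $\Theta_b^{(2)}(\xi) - \Theta_b^{(2)}(-\xi)$ appearing in the integrand absorbs the apparent $1/\xi$ singularity at $0$: for the error piece through a factor $\sin(b\xi)/\xi$, and for the main piece through factors $\sin(\xi(b+k/\sqrt n))-\sin(b\xi)$, both of which vanish linearly in $\xi$.

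For $\Theta_b^{(2,E)}$, the evenness of $f(\xi) := e^{-\varrho^2\xi^2/2}\widehat{\vartheta_{\delta_n}}(\xi)$ gives
$$\Theta_b^{(2,E)}(\xi) - \Theta_b^{(2,E)}(-\xi) = 2i(d_{n,x}-1)\, f(\xi)\sin(b\xi).$$
Using $|\sin(b\xi)/\xi|\leq |b|\leq \sqrt n$, $|d_{n,x}-1|\lesssim \beta^n$, and $\int_\R e^{-\varrho^2\xi^2/2}\,d\xi < \infty$, this contribution is $O(\sqrt n\,\beta^n) = O(1/\sqrt n)$.

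For $\Theta_b^{(2,M)}$, a direct grouping of the $k$-th term yields
$$2i\, d_{n,x}\, a_k\,\big[K(b + k/\sqrt n) - K(b)\big], \quad K(c) := \int_0^{\xi_0\sqrt n}\frac{f(\xi)\sin(c\xi)}{\xi}\,d\xi.$$
Differentiating under the integral gives $K'(c) = \int_0^{\xi_0\sqrt n} f(\xi)\cos(c\xi)\, d\xi$, which is bounded uniformly in $c$ by $\int_0^\infty e^{-\varrho^2\xi^2/2}\,d\xi$. The mean value theorem then yields $|K(b+k/\sqrt n) - K(b)|\lesssim |k|/\sqrt n$, uniformly in $b$.

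The remaining step, and the place where admissibility of $u$ enters crucially (and which I consider the main point), is the uniform summability $\sum_k |k|\,|a_k|\lesssim 1$. For $|k|\geq 1$, the support condition $\chi_k\subset \Tc^u_k\subset\{|u|>|k|-1\}$ combined with Property (1) of Definition \ref{def:admissible} gives $|a_k|\leq 2\|\varphi\|_\infty\,\nu(\Tc_k^u)\lesssim e^{-\eta_*|k|}$. Putting these bounds together, the main contribution is $\lesssim d_{n,x}\sum_k|a_k||k|/\sqrt n\lesssim 1/\sqrt n$, which combined with the error bound finishes the proof.
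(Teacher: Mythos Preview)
Your proof is correct and follows essentially the same strategy as the paper's. Both arguments split $\Theta_b^{(2)}$ into a piece governed by the summable weights $a_k=\oN_0(\chi_k\varphi)\lesssim e^{-\eta_*|k|}$ (your $\Theta_b^{(2,M)}$, the paper's $\Omega_1$) and a piece carrying the exponentially small factor $d_{n,x}-1$ (your $\Theta_b^{(2,E)}$, the paper's $\Omega_2$); the decisive estimate $\sum_k |k|\,a_k<\infty$ from admissibility is identical in both.

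The differences are cosmetic but worth recording. Your regrouping places only the constant $1$ in the error piece, so $\Theta_b^{(2,E)}$ is handled in one line via the evenness of $f$ and $|\sin(b\xi)/\xi|\leq |b|$. The paper instead keeps the full factor $\oN_0(\Phi_{n,\xi}+\Phi_n^\star)$ in its $\Omega_2$ and must therefore differentiate in $\xi$ and apply the mean value theorem to $\Omega_2$. For the main piece, the paper bounds the integrand pointwise by $e^{-\varrho^2\xi^2/2}|\xi|/\sqrt n$ via $|e^{i\xi k/\sqrt n}-1|\leq |\xi||k|/\sqrt n$ and integrates directly; you instead package the integral as $K(b+k/\sqrt n)-K(b)$ and apply the mean value theorem to $K$. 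Both routes give the same $|k|/\sqrt n$ increment and the same conclusion.
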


\begin{proof}
	Recall that  $\widetilde h_n(\xi)= d_{n,x} e^{-{\varrho^2 \xi^2\over 2}} \oN_0 (\Phi_{n,\xi} +  \Phi_{n}^{\star})$ and $\widehat h(\xi) =  e^{-{\varrho^2 \xi^2\over 2}}$. Put 
	$$\Omega_1(\xi):=  e^{ib \xi} e^{-{\varrho^2 \xi^2\over 2}}\big(  \oN_0 (\Phi_{n,\xi} +  \Phi_{n}^{\star}) - 1 \big) \widehat{\vartheta_{\delta_n}}(\xi) $$
	and
	$$\Omega_2(\xi):=e^{ib \xi} e^{-{\varrho^2 \xi^2\over 2}} (d_{n,x}-1)   \oN_0 (\Phi_{n,\xi} +  \Phi_{n}^{\star}) \, \widehat{\vartheta_{\delta_n}}(\xi), $$
	so that $\Theta_b^{(2)}=\Omega_1+\Omega_2$.
	\vskip 5pt
	
	We estimate $\Omega_1$ first.
	Since $\chi_k$ is bounded by $1$ and it is supported by  $\Tc^u_k \subset \{|u| \geq |k| - 1\}$ (see Lemma \ref{lemma:partition-of-unity}), we have
	$$\oN_0 (\chi_k\varphi) = \int_{\P^{d-1}} \chi_k\varphi \, \diff \nu \leq \nu \{|u| \geq |k| - 1\} \norm{\varphi}_\infty  \lesssim e^{-\eta_* |k|},$$
	where in the last step we have used Property (1) of Definition \ref{def:admissible}.
	
	Using the identity \eqref{eq:psi_xi+psi_T}, the fact that $\oN_0 \mathbf \varphi =  1$ and  $\norm{\widehat{\vartheta_{\delta_n}}}_{\Cc^1}\lesssim 1$,  we get
	\begin{align*}
	\big|\Omega_1(\xi)\big| =\Big| e^{ib \xi} e^{-{\varrho^2 \xi^2\over 2}} \big(  \oN_0 (\Phi_{n,\xi} +  \Phi_{n}^{\star}) - 1 \big) \widehat{\vartheta_{\delta_n}}(\xi) \Big|=  e^{-{\varrho^2 \xi^2\over 2}} \big|  \oN_0 (\Phi_{n,\xi} +  \Phi_{n}^{\star}) - \oN_0\varphi    \big|\cdot \big|\widehat{\vartheta_{\delta_n}}(\xi)\big|\\
	\lesssim e^{-{\varrho^2 \xi^2\over 2}} \sum_{|k| \leq A\log n} \big|  e^{i \xi{k \over \sqrt n}}-1 \big|  \oN_0 ( \chi_k \varphi )   \lesssim e^{-{\varrho^2 \xi^2\over 2}}   \sum_{k \geq 0} | \xi| {|k| \over \sqrt n} e^{-\eta_* |k|} \lesssim e^{-{\varrho^2 \xi^2\over 2}}{|\xi|\over \sqrt n}.
	\end{align*}
	Therefore, the integral involving $\Omega_1$ is bounded by
	$$   \int_{-\xi_0\sqrt n}^{\xi_0\sqrt n} \Big|{\Omega_1(\xi)\over \xi} \Big|  \,\diff \xi   \lesssim  \int_{-\xi_0\sqrt n}^{\xi_0\sqrt n}  {1\over |\xi|} \cdot e^{-{\varrho^2 \xi^2\over 2}} {|\xi|\over \sqrt n} \norm{\varphi}_\infty \,\diff \xi \lesssim {1 \over \sqrt n}.$$
	
	\vskip 3pt
	
	It remains to estimate $\Omega_2$. By a direct computation using the identity \eqref{eq:psi_xi+psi_T} and that $ \big|\oN_0 (\Phi_{n,\xi} +  \Phi_{n}^{\star})\big| \leq 1$ (see the proof of Lemma \ref{lemma:theta-1-bound}), $|d_{n,x} -1| \lesssim \beta^n$, $|\widehat{\vartheta_{\delta_n}}|\leq 1$, and $|b|\leq \sqrt n$, gives that, for $|\xi|\leq \xi_0\sqrt n$,
	\begin{align*}
	\big| \Omega_2'(\xi)\big| &\lesssim |d_{n,x}-1| \Big( \big|b \oN_0  (\Phi_{n,\xi} +\Phi_{n}^{\star}  )(x)\big|+    \big|\sqrt n  \oN_0  (\Phi_{n,\xi} +\Phi_{n}^{\star}  )(x)\big| \\ 
	& \quad\quad \quad\quad \quad\quad \quad + \sum_{|k| \leq A\log n} \Big|{k\over \sqrt n} e^{i \xi \frac{k}{\sqrt n}}\oN_0 (\chi_k\varphi) (x)    \Big|  + \big|\oN_0  (\Phi_{n,\xi} +\Phi_{n}^{\star}  )(x)\big|\cdot \norm{\widehat{\vartheta_{\delta_n}}}_{\Cc^1} \Big)\\
	&\lesssim \beta^n \Big(  \sqrt n+\sqrt n + \sum_{|k| \leq A\log n} \oN_0(\chi_k\varphi) + 1  \Big)\lesssim \sqrt n\beta^n.
	\end{align*}
	Thus, by the mean value theorem, 
	$$\Big| \int_{0}^{\xi_0\sqrt n} {\Omega_2(\xi)-\Omega_2(-\xi)  \over \xi}   \,\diff \xi \Big| \leq 2\xi_0 \sqrt n \sup_{|\xi|\leq \xi_0\sqrt n} \big|\Omega_2'(\xi)\big|\lesssim n\beta^n \lesssim { 1 \over \sqrt n}.$$ 
	The lemma follows.
\end{proof}

We can now finish the proof of  Proposition \ref{prop:BE-coeff-1-varphi}.

\begin{proof}[End of the proof  of Proposition \ref{prop:BE-coeff-1-varphi}] 
	Recall that our goal is to prove \eqref{goal-1-varphi}. Estimate \eqref{eq:BE-main-estimate} together with Lemmas \ref{lemma:theta-1-bound} and \ref{lemma:theta-2-bound} give that
	$$\big|F_n(b)-H(b)\big| \lesssim { 1 \over  \sqrt n} \quad \text{for all} \quad b \in \R.$$
	Recall that $H(b): =\frac{1}{\sqrt{2 \pi}  \,\varrho} \int_{-\infty}^b e^{-\frac{s^2}{2 \varrho^2}} \, \diff s$. Coupling the last estimate with Lemma \ref{lemma:Ln-Fn} and the fact that $\sup_{b \in \R} \big|H(b)-H(b \pm 1/ \sqrt n)\big| \lesssim 1/ \sqrt n$, we get $ \big|\oL_n^b(\varphi)- H(b)\big| \lesssim 1 /  \sqrt n$, thus showing that  \eqref{goal-1-varphi} holds. Observe that all of our estimates are uniform in $x \in \P^{d-1}$ and $y\in (\P^{d-1})^*$. The proof of the proposition is complete.
\end{proof}

\subsection{The case of general $\psi \in \Hc$} In this subsection, we use Proposition \ref{prop:BE-coeff-1-varphi} to prove the general case of Theorem \ref{thm:BE-general}. Recall that our goal is to show \eqref{eq:E_n-R}.   The following lemma will allow us later on to consider only the case when $|b|\leq  \sqrt n$.

\begin{lemma}\label{b-leq-n-2}
	There exists a constant $C>0$ such that for all $n\geq 1$,
	$$\oE_n\big(\mathbf 1_{|t|\geq \sqrt n}\big)\leq C/ \sqrt n.  $$ 
\end{lemma}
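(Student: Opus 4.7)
The plan is to observe that, since the indicator $\mathbf{1}_{|t|\geq \sqrt n}$ depends only on $t$, the quantity $\oE_n(\mathbf{1}_{|t|\geq \sqrt n})$ is simply the probability
\begin{equation*}
\mathbf{P}\bigl(|\sigma(S_n,x) + u(S_n x) - n\gamma| \geq n\bigr).
\end{equation*}
By the triangle inequality, this event is contained in the union
\begin{equation*}
\bigl\{|\sigma(S_n,x) - n\gamma| \geq n/2\bigr\} \cup \bigl\{|u(S_n x)| \geq n/2\bigr\},
\end{equation*}
so it suffices to show each of these two probabilities is $O(1/\sqrt n)$; in fact both will decay exponentially in $n$.

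For the first event, I would invoke Proposition \ref{prop:BQLDT} with $\ep = 1/2$, which gives a bound of $e^{-cn}$ for all $n$ large enough, vastly better than $1/\sqrt n$. For the second event, I would apply Proposition \ref{prop:LDT-admissible}: with $\ell := n$ and $m := \lfloor n/N \rfloor$ (where $N$ is the integer provided by that proposition, which we may assume is $\geq 2$ up to enlarging it), the condition $\ell \geq Nm \geq n_0$ holds once $n$ is large enough, and hence $\mathbf{P}(|u(S_n x)| \geq n/N) \leq e^{-cn/N}$. Since $n/2 \geq n/N$, we have $\{|u(S_n x)| \geq n/2\} \subseteq \{|u(S_n x)| \geq n/N\}$, so this second probability is also exponentially small.

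Combining both bounds yields $\oE_n(\mathbf{1}_{|t|\geq \sqrt n}) \leq 2 e^{-c' n}$ for some $c' > 0$ and $n$ large, which is $\leq C/\sqrt n$ for a suitable $C$; the finitely many small values of $n$ are absorbed by enlarging $C$. There is no genuine obstacle in this step: the content of the lemma is essentially a restatement of the large deviation principles already established for $\sigma(S_n,x)$ and for admissible $u$, and the target rate $1/\sqrt n$ is vastly weaker than the exponential decay that is actually available. The real role of the lemma is to justify, in the subsequent argument, restricting attention to the range $|b| \leq \sqrt n$ when integrating against a general $\psi \in \Hc$.
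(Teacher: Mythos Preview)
Your proof is correct and follows essentially the same approach as the paper: both split the event via the triangle inequality into a $\sigma$-part and a $u$-part, and then invoke the respective large deviation estimates (Proposition~\ref{prop:BQLDT} with $\ep=1/2$ for $\sigma$, and Proposition~\ref{prop:LDT-admissible} for $u$). The only cosmetic difference is that the paper cuts off $|u(S_n x)|$ at $A\log n$ using \eqref{A-large-ineq}, while you cut at $n/2$ and appeal to Proposition~\ref{prop:LDT-admissible} directly; both choices give far more decay than the $1/\sqrt n$ needed.
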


\proof
We can assume that $n$ is large enough. Let $A>0$ be as before.
By \eqref{A-large-ineq}, $|u(S_n x)| \geq A\log n $ with probability $\lesssim 1 / \sqrt n$. Therefore,  by the definition of $\oE_n$, see \eqref{eq:En-def},   we have
$$\oE_n\big(\mathbf 1_{|t|\geq \sqrt n}\big)\leq \mathbf E\Big(\mathbf 1_{\left|{ \sigma(S_n,x) + u(S_n x)-n\gamma\over \sqrt n}\right| \geq \sqrt n }    \mathbf 1_{|u(S_n x)| < A\log n}\Big)+ C' /\sqrt n$$
for some constant $C'>0$.

Using that  $A\log n \leq n/2$ for $n$ large enough, the last quantity is bounded by  
$$\mathbf E\Big(\mathbf 1_{| \sigma(S_n,x) -n\gamma| \geq  n / 2}\Big)+ C' /\sqrt n = \mathbf P\Big(| \sigma(S_n,x) -n\gamma| \geq  n / 2\Big)+ C'/\sqrt n.$$
The result follows by applying Proposition \ref{prop:BQLDT} with $\ep=1/2$.
\endproof

We continue the proof of Theorem \ref{thm:BE-general}.  Recall from the introduction that $\Hc$ is the space of bounded Lipschitz functions $\psi$ on $\R$ such that  $\psi'$ is bounded and belongs to $L^1(\R)$, equipped with the norm
$$\norm{\psi}_\Hc:=\norm{\psi}_\infty+\norm{\psi'}_\infty +\norm{\psi'}_{L^1}.$$

We can assume $\norm{\psi}_\Hc \leq 1$. So, $\psi$ is a Lipschitz function on $\R$ such that $\norm{\psi}_\infty\leq 1$, $\norm{\psi'}_\infty\leq 1$ and $\norm{\psi'}_{L^1}\leq 1$. We can also assume that $\varphi$ is non-negative and $\norm{\varphi}_{\Cc^\alpha}\leq 1$.
Recall that  $J=(-\infty,b]$.  By Proposition \ref{prop:BE-coeff-1-varphi}, Theorem  \ref{thm:BE-general} holds when $\psi$ is a constant function. Hence,  we can subtract $\psi(b)$ from $\psi$ and assume that $\psi(b)=0$. In this case, $\mathbf 1_{(-\infty,b]}\psi$ is a  Lipschitz function on $\R$ with Lipschitz norm bounded by $1$.    By considering differences of such functions, we can  assume that $\psi$ is non-negative.
\vskip 5pt

The following regularization procedure will be used.
For every $n\geq 1$, consider a smooth cut-off function $\tau_n$ on $\R$ such that $$\tau_n=0  \,\,\text{ on }\,\, (-\infty,-\sqrt n-2], \quad \tau_n=1  \,\,\text{ on }\,\, [-\sqrt n,\infty)   \quad \text{and} \quad |\tau_n'|\leq 1 \,\,\text{ on }\,\, \R.$$

 \begin{lemma} \label{lemma:psi-tau-n}
 Assume that \eqref{eq:E_n-R} holds for $\psi\tau_n$ and $\varphi$ instead of $\psi$ and $\varphi$. Then it holds for  $\psi$ and $\varphi$. Moreover, \eqref{eq:E_n-R} holds for  $\psi$ and $\varphi$ when $|b| \geq \sqrt n$.
 \end{lemma}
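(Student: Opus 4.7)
The plan is to prove the two statements by isolating the tail regions $\{|t|\geq\sqrt n\}$, where Lemma \ref{b-leq-n-2} forces $\oE_n$ to be small and where the Gaussian density in $\oR$ is automatically negligible.

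For the reduction to $\psi\tau_n$, I would split $\psi = \psi\tau_n + \psi(1-\tau_n)$. Since $1-\tau_n$ vanishes on $[-\sqrt n,\infty)$, the function $\mathbf 1_{(-\infty,b]}\psi(1-\tau_n)$ is supported in $(-\infty,-\sqrt n]$ for \emph{every} $b$. Its contribution to $\oE_n$ is then bounded by $\norm{\psi}_\infty\norm{\varphi}_\infty\,\oE_n(\mathbf 1_{|t|\geq\sqrt n})\lesssim 1/\sqrt n$ via Lemma \ref{b-leq-n-2}, while its contribution to $\oR$ is $\lesssim \int_{-\infty}^{-\sqrt n}e^{-s^2/(2\varrho^2)}\,ds$, which is exponentially small in $n$. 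It remains to verify that $\norm{\psi\tau_n}_\Hc$ is controlled by a constant multiple of $\norm{\psi}_\Hc$ uniformly in $n$; this follows from the Leibniz rule together with $\norm{\tau_n'}_{L^1}\leq 2$ (since $\tau_n'$ is supported in $[-\sqrt n-2,-\sqrt n]$ with $|\tau_n'|\leq 1$). A triangle inequality then transfers \eqref{eq:E_n-R} from $\psi\tau_n$ to $\psi$.

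For the second statement, I would distinguish two sub-cases. If $b\leq-\sqrt n$, the function $\mathbf 1_{(-\infty,b]}\psi$ is itself supported in $(-\infty,-\sqrt n]$, and the tail estimates above yield \eqref{eq:E_n-R} directly. If $b\geq\sqrt n$, I would decompose $\mathbf 1_{(-\infty,b]}=1-\mathbf 1_{(b,\infty)}$; the piece involving $\mathbf 1_{(b,\infty)}$ is supported in $\{t>\sqrt n\}$ and is handled as before, so the problem reduces to bounding the ``no-indicator'' quantity $|\oE_n(\psi\varphi)-\oR(\psi\varphi)|$.

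For this last quantity, I would use Fubini. Since $\psi'\in L^1(\R)$, the limit $\psi(\infty):=\lim_{t\to\infty}\psi(t)$ exists and $\psi(t)=\psi(\infty)-\int_t^\infty \psi'(s)\,ds$. Substituting this identity into the definitions of $\oE_n(\psi\varphi)$ and $\oR(\psi\varphi)$ and interchanging the order of integration (justified since $|\psi'(s)|\cdot\norm{\varphi}_\infty$ is integrable) produces
$$\oE_n(\psi\varphi)-\oR(\psi\varphi) = \psi(\infty)\bigl[\mathbf E(\varphi(S_n x))-\textstyle\int\varphi\,d\nu\bigr] - \int_\R \psi'(s)\bigl[\oE_n(\mathbf 1_{(-\infty,s]}\varphi)-\oR(\mathbf 1_{(-\infty,s]}\varphi)\bigr]\,ds.$$
The first bracket is $O(\tau^n)$ by the spectral gap of $\oP_0$ (Theorem \ref{thm:spectral-gap}), while the integrand in the second bracket is uniformly $O(\norm{\varphi}_{\Cc^\alpha}/\sqrt n)$ in $s$ by Proposition \ref{prop:BE-coeff-1-varphi}. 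Combined with $\norm{\psi'}_{L^1}\leq\norm{\psi}_\Hc$, this gives the desired $O(1/\sqrt n)$ bound. The only obstacle is bookkeeping: one must track the uniformity of Proposition \ref{prop:BE-coeff-1-varphi} in both $s$ and $x$, which is built into its statement.
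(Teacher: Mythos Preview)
Your proof is correct. The first statement and the sub-case $b\leq -\sqrt n$ are handled exactly as in the paper: the difference $\psi-\psi\tau_n$ (respectively $\mathbf 1_{(-\infty,b]}\psi$) lives in $\{t\leq -\sqrt n\}$, and Lemma~\ref{b-leq-n-2} together with the Gaussian tail bound the $\oE_n$ and $\oR$ contributions by $O(1/\sqrt n)$. Your extra check that $\norm{\psi\tau_n}_\Hc\lesssim\norm{\psi}_\Hc$ is a harmless addition; the paper simply works under the normalization $\norm{\psi}_\Hc\leq 1$ and does not track this constant.

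For the sub-case $b\geq\sqrt n$ you take a genuinely different route. The paper writes only ``repeat the above arguments with $1-\oE_n$ (resp.\ $1-\oR$)'', which amounts to passing to the complementary indicator $\mathbf 1_{(b,\infty)}$ supported in $\{t>\sqrt n\}$; however this still leaves the ``no-indicator'' term $|\oE_n(\psi\varphi)-\oR(\psi\varphi)|$ to be bounded, and the paper's proof does not make that step explicit. You supply it cleanly via the layer-cake identity $\psi(t)=\psi(\infty)-\int_t^\infty\psi'(s)\,\diff s$ and Fubini, reducing to the already-proved Proposition~\ref{prop:BE-coeff-1-varphi} (the case $\psi=\mathbf 1$) together with the spectral gap for the constant term. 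This is more transparent than the paper's argument and, incidentally, strong enough that a variant of it (applied to $\mathbf 1_{(-\infty,b]}\psi$ after subtracting $\psi(b)$) could bypass much of the Fourier analysis in the remainder of Section~\ref{sec:BE}.
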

 
 \begin{proof}
 We have that
  \begin{align*}
  \big|\oE_n ( & \mathbf 1_{(-\infty,b]} \psi \cdot \varphi) - \oR( \mathbf 1_{(-\infty,b]}  \psi \cdot \varphi) - \oE_n ( \mathbf 1_{(-\infty,b]} (\psi\tau_n)  \cdot \varphi)  + \oR( \mathbf 1_{(-\infty,b]}  (\psi\tau_n)  \cdot \varphi) \big| \\ &\leq \oE_n(|\Phi_n|) + \oR(|\Phi_n|),
 \end{align*}
 where $\Phi_n(t,w):= \mathbf 1_{(-\infty,b]} (\psi\tau_n) (t) \cdot \varphi(w) - \mathbf 1_{(-\infty,b]} \psi (t) \cdot \varphi(w)$. By the definition of $\tau_n$, the function $\Phi_n$ is bounded by a constant and vanishes when $t \geq - \sqrt n$. It follows from Lemma \ref{b-leq-n-2} that $\oE_n(|\Phi_n|) \leq  C  / \sqrt n$. The same estimate clearly holds for $\oR(|\Phi_n|)$. The first statement follows.
 
 For the second statement, we observe that if $b < - \sqrt n$, we have that $$ 0 \leq \mathbf 1_{(-\infty, -\sqrt n]} \psi\cdot \varphi - \mathbf 1_{(-\infty,b]} \psi\cdot \varphi    =  \mathbf 1_{(b,-\sqrt n]}   \psi \cdot \varphi  \leq \mathbf 1_{t  \leq - \sqrt n},$$ so $\big| \oE_n (\mathbf 1_{(-\infty,- \sqrt n]} \psi \cdot \varphi) - \oE_n (\mathbf 1_{(-\infty,b]} \psi \cdot \varphi)  \big| \lesssim 1 / \sqrt n$ by Lemma \ref{b-leq-n-2} and it is clear that the same estimate holds for $\oR$ instead of $\oE_n$. Therefore, \eqref{eq:E_n-R} holds in this case. When $b > \sqrt n$, we can repeat the above arguments with $1 - \oE_n$ (resp.  $1 - \oR$) instead  of $\oE_n$ (resp.  $\oR$) and obtain the same estimates.
   \end{proof}

 By the above lemma, we can assume from now on that $|b|\leq \sqrt n$ and, to prove Theorem \ref{thm:BE-general},  it will be enough to obtain \eqref{eq:E_n-R}  for $(\psi\tau_n)(t)\cdot \varphi(w)$ instead of $\psi(t)\cdot \varphi(w)$. In order to obtain the desired result for $(\psi\tau_n)  \cdot \varphi$, the following estimates will be needed. 

\begin{lemma}\label{lemma-tau}
	Let $\tau_n$ be as above and $b \in \R$ be  such that $|b|\leq \sqrt n$. Let $\psi$ be a  function in $\Hc$ such that $\norm{\psi}_\Hc\leq 1$ and $\psi(b)=0$. Then, we have
	$$ \big|\widehat{(\mathbf 1_{(-\infty,b]}\psi\tau_n)}(\xi)\big|\leq 2\sqrt n +2         \quad
	\text{and}
	\quad \big|\widehat{(\mathbf 1_{(-\infty,b]}\psi\tau_n)}(\xi)\big|\leq 5/|\xi| \quad\text{for}\quad \xi\neq 0.$$
\end{lemma}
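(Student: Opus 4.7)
Write $f := \mathbf 1_{(-\infty,b]}\psi\tau_n$. The plan is to establish the two bounds by (a) estimating $\|f\|_{L^1}$ directly for the first, and (b) integrating by parts for the second, the key point being that $f$ is genuinely continuous on $\R$ thanks to the hypothesis $\psi(b)=0$.

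For the first bound, observe that $\tau_n$ vanishes on $(-\infty,-\sqrt n-2]$ and $\mathbf 1_{(-\infty,b]}$ vanishes on $(b,\infty)$, so $f$ is supported in $[-\sqrt n-2,b]$. Since $|b|\leq \sqrt n$, this interval has length at most $2\sqrt n+2$. As $\|\psi\|_\infty\leq \|\psi\|_\Hc\leq 1$ and $|\tau_n|\leq 1$, we have $\|f\|_\infty\leq 1$, hence $\|f\|_{L^1}\leq 2\sqrt n+2$, which immediately gives $|\widehat f(\xi)|\leq \|f\|_{L^1}\leq 2\sqrt n+2$.

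For the second bound, the crucial observation is that the indicator $\mathbf 1_{(-\infty,b]}\psi$ is continuous at $t=b$ because $\psi(b)=0$; since $\tau_n$ is smooth and $f$ already vanishes outside $[-\sqrt n-2,b]$, the function $f$ is continuous on $\R$ and absolutely continuous on its support. Its classical derivative (defined a.e.)\ is
\[
f'(t)=\mathbf 1_{(-\infty,b]}(t)\,\bigl(\psi'(t)\tau_n(t)+\psi(t)\tau_n'(t)\bigr),
\]
with no boundary Dirac mass at $t=b$ thanks to continuity. Integrating by parts (or using that for $f$ continuous, compactly supported and piecewise $C^1$ one has $\widehat{f'}(\xi)=i\xi\,\widehat f(\xi)$) yields
\[
\widehat f(\xi)=\frac{1}{i\xi}\,\widehat{f'}(\xi),\qquad \xi\neq 0.
\]
It remains to estimate $\|f'\|_{L^1}$. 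By $\|\tau_n\|_\infty\leq 1$ and $\|\psi'\|_{L^1}\leq 1$ we get $\int|\psi'\tau_n|\leq 1$. For the second piece, $\tau_n'$ is supported in $[-\sqrt n-2,-\sqrt n]$ with $|\tau_n'|\leq 1$, so $\|\tau_n'\|_{L^1}\leq 2$, and $\|\psi\|_\infty\leq 1$ gives $\int|\psi\tau_n'|\leq 2$. Hence $\|f'\|_{L^1}\leq 3\leq 5$, and the desired bound $|\widehat f(\xi)|\leq 5/|\xi|$ follows.

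The only subtle point is the appearance of a boundary term when differentiating through the characteristic function $\mathbf 1_{(-\infty,b]}$; this is precisely neutralised by the assumption $\psi(b)=0$ (which is why the authors reduced to this case earlier). Without it, one would pick up a term of the form $\psi(b)\tau_n(b)\,\delta_b$ which would prevent the $1/|\xi|$ decay.
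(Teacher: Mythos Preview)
Your proof is correct and follows essentially the same approach as the paper: bound $\|f\|_{L^1}$ for the first estimate, then integrate by parts and bound $\|f'\|_{L^1}$ for the second, using $\psi(b)=0$ to kill the boundary term. The only cosmetic difference is that you split $f'$ via the product rule (getting $\|f'\|_{L^1}\leq 1+2=3$), whereas the paper splits by region according to $\supp\tau_n'$ (getting $1+4=5$); both suffice for the stated bound.
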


\begin{proof}
	The assumption $\norm{\psi}_\Hc\leq 1$ implies that $\norm{\psi}_\infty\leq 1$, $\norm{\psi'}_\infty\leq 1$ and $\norm{\psi'}_{L^1}\leq 1$. By the definition of $\tau_n$, we have  $|(\psi\tau_n)'|\leq |\psi'|$ on $\R\setminus (-\sqrt n-2, -\sqrt n)$.	Also,  $|(\psi\tau_n)'(t)|\leq 2$ on $[-\sqrt n-2, -\sqrt n]$, since $|\tau_n'|\leq 1$. Therefore,
	\begin{equation}\label{derivative-int-1}
	\int_{-\infty}^\infty  \big|(\psi\tau_n)' (t)\big|\,\diff t \leq \int_{-\infty}^\infty \big|\psi'(t)\big|\,\diff t +\int_{-\sqrt n-2}^{-\sqrt n} 2 \,\diff t \leq 1 +4 =5.
	\end{equation}
	
	By the definition of Fourier transform, 
	$$\big|\widehat{(\mathbf 1_{(-\infty,b]}\psi\tau_n)}(\xi)\big| \leq  \| \mathbf 1_{(-\infty,b]}\psi\tau_n  \|_{L^1}\leq \int_{- \sqrt n - 2}^b \diff t \leq 2\sqrt n +2$$
	since $|b|\leq \sqrt n$. This gives the first inequality.
	
	For the second inequality,  using integration by parts, $\widehat{(\mathbf 1_{(-\infty,b]}\psi\tau_n)}(\xi)$ is equal to 
	$$ \int_{-\infty}^b  (\psi\tau_n)(t)e^{-it\xi}   \,\diff t  = \Big[  (\psi\tau_n)(t)\cdot {e^{-it\xi}\over -i\xi} \Big]_{-\infty}^b- \int_{-\infty}^b  (\psi\tau_n)'(t) \cdot {e^{-it\xi}\over -i\xi} \,\diff t   .  $$
	Notice that $ (\psi\tau_n)(-\infty)= (\psi\tau_n)(b)=0$,  where we have used the assumption that $\psi(b)=0$.  Therefore,  the first term on the right hand side vanishes when $\xi\neq 0$. Using \eqref{derivative-int-1}, we conclude that
	\begin{equation*}
	\big|\widehat{(\mathbf 1_{(-\infty,b]}\psi\tau_n)}(\xi)\big|\leq 5/|\xi| \quad\text{for}\quad \xi\neq 0,
	\end{equation*}
	yielding the second estimate.
\end{proof}

Let  $\vartheta_\delta$ be the function from Lemma \ref{lemma:vartheta} and take $\delta_n:=(\xi_0 \sqrt n)^{-1/2}$, where $\xi_0$ is a small constant satisfying Lemma \ref{lemma:lambda-estimates}. 
Define $$\Psi_n:=(\mathbf 1_{(-\infty,b]}\psi\tau_n) * \vartheta_{\delta_n}.$$

Recall that, after Lemma \ref{lemma:psi-tau-n}, our goal is to obtain \eqref{eq:E_n-R} for  $(\psi\tau_n) \cdot \varphi$ instead of $\psi \cdot \varphi$. From Lemma \ref{lemma:convolution-lip}, we have that $\norm{\Psi_n - \mathbf 1_{(-\infty,b]}\psi\tau_n}_\infty \lesssim 1 / \sqrt n$. Combined with the estimate \eqref{A-large-ineq}, we see that it is enough to show that
\begin{equation}\label{goal-psi-varphi}
\big| \oS_n  -  \oR_n \big| \lesssim 1/\sqrt n,
\end{equation}
where
$$ \oS_n :=\mathbf E\Big( \Psi_n\Big({\sigma(S_n,x) + u(S_n x) - n \gamma \over \sqrt n}\Big)   \mathbf 1_{|u(S_n x)| < A\log n} \,  \varphi(S_n x) \Big)$$
and
$$\oR_n:=  \oR\big(  \Psi_n(t)\cdot \varphi(w)\big).$$

Let $\chi_k$ be the functions defined in Lemma \ref{lemma:partition-of-unity}  and $\Phi_{n}^{\star}$ be the function defined in \eqref{eq:Phi-star-def}. Set
$$\oA_n:= \sum_{|k| \leq A\log n} \mathbf E\Big(  \Psi_n\Big({\sigma(S_n,x) - n \gamma  - k\over \sqrt n}\Big) (\chi_k \varphi)(S_n x) \Big) + \mathbf E\Big(  \Psi_n\Big({\sigma(S_n,x) - n \gamma  \over \sqrt n}\Big) \Phi_{n}^{\star}(S_n x) \Big).$$

\begin{lemma}\label{A-S-lemma}
	We have $|\oA_n-\oS_n|\lesssim 1/\sqrt n$.
\end{lemma}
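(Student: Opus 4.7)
The plan is to first remove the indicator $\mathbf 1_{|u(S_n x)| < A \log n}$ from $\oS_n$, and then apply the partition of unity identity
$$\varphi = \sum_{|k| \leq A \log n} \chi_k \varphi + \Phi_n^\star,$$
which holds pointwise on $\P^{d-1}$ by the very definition of $\Phi_n^\star$ in \eqref{eq:Phi-star-def}, to rewrite the resulting expectation in a form that can be matched termwise with $\oA_n$. For the first reduction, since $\|\Psi_n\|_\infty \leq \|\psi\|_\infty \leq 1$, estimate \eqref{A-large-ineq} gives
$$\Big| \oS_n - \mathbf E\Big( \Psi_n\Big(\tfrac{\sigma(S_n,x) + u(S_n x) - n\gamma}{\sqrt n}\Big) \varphi(S_n x) \Big) \Big| \lesssim \tfrac{1}{\sqrt n}.$$
Call $\widetilde \oS_n$ the expectation on the right; it remains to bound $|\widetilde \oS_n - \oA_n|$.

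Substituting the identity for $\varphi$ inside $\widetilde \oS_n$ and subtracting $\oA_n$ yields
\begin{align*}
\widetilde \oS_n - \oA_n &= \sum_{|k| \leq A\log n} \mathbf E\Big( \Big[\Psi_n\Big(\tfrac{\sigma(S_n,x) + u(S_n x) - n\gamma}{\sqrt n}\Big) - \Psi_n\Big(\tfrac{\sigma(S_n,x) - n\gamma - k}{\sqrt n}\Big) \Big] (\chi_k \varphi)(S_n x) \Big) \\
&\quad + \mathbf E\Big( \Big[\Psi_n\Big(\tfrac{\sigma(S_n,x) + u(S_n x) - n\gamma}{\sqrt n}\Big) - \Psi_n\Big(\tfrac{\sigma(S_n,x) - n\gamma}{\sqrt n}\Big) \Big] \Phi_n^\star(S_n x) \Big).
\end{align*}
For the first sum: on the support of $\chi_k$ one has $|u(\cdot) + k| < 1$, so the difference in brackets is $\leq \|\Psi_n\|_{\Lip}/\sqrt n$; summing over $k$ and using $\sum_k \chi_k \leq 1$ on $\P^{d-1} \setminus \Sigma_u$ bounds this by $\|\Psi_n\|_{\Lip}\|\varphi\|_\infty /\sqrt n$. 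For the second term, Lemma \ref{lemma:norm-Phi} says $\Phi_n^\star$ is supported on $\{|u| \geq A \log n - 1\}$, which has probability $\lesssim 1/\sqrt n$ by \eqref{A-large-ineq}; together with $\|\Psi_n\|_\infty \leq 1$ and $\|\Phi_n^\star\|_\infty \leq 2 \|\varphi\|_\infty$, this term is also $\lesssim 1/\sqrt n$. The exceptional set $\Sigma_u$, where $u = \pm \infty$, contributes at most $\|\Psi_n\|_\infty \|\varphi\|_\infty \mathbf P(S_n x \in \Sigma_u)$ since $\Psi_n$ vanishes at $\pm\infty$, and this is again $\lesssim 1/\sqrt n$ by \eqref{A-large-ineq}.

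The key technical point is the uniform bound $\|\Psi_n\|_{\Lip} \leq C$ independent of $n$. Since $\Psi_n = f \ast \vartheta_{\delta_n}$ with $f := \mathbf 1_{(-\infty, b]} \psi \tau_n$ and $\vartheta_{\delta_n}$ a probability density, Young's inequality gives $\|\Psi_n'\|_\infty \leq \|f'\|_\infty$. Using $\|\psi'\|_\infty \leq 1$, $|\tau_n'| \leq 1$, and $\|\psi\|_\infty, |\tau_n| \leq 1$, one gets $|f'| \leq 2$ on every open subinterval where $f$ is differentiable; the crucial observation is that $\psi(b) = 0$ (after the reduction preceding the lemma) so that $f$ has no jump at $t = b$, and is therefore globally Lipschitz with constant at most $2$. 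Hence $\|\Psi_n\|_{\Lip} \leq 2$ uniformly in $n$, which is exactly what the estimate above needs. Note that the cruder bound $\|\Psi_n'\|_\infty \leq \|f\|_\infty \|\vartheta'_{\delta_n}\|_{L^1} \lesssim \sqrt n$ would make the first sum of order $1$ and would be insufficient; the reduction $\psi(b)=0$ is what makes the argument go through.
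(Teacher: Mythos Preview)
Your proof is correct and essentially parallel to the paper's, but with one structural difference worth noting. The paper does not remove the indicator $\mathbf 1_{|u(S_n x)| < A\log n}$ at the outset; instead it sandwiches $\oS_n$ between the two sums $\oS_n^\pm$ indexed by $|k| \leq A\log n \pm 1$ (using $\sum_{|k|\leq A\log n -1}\chi_k \leq \mathbf 1_{|u|\leq A\log n} \leq \sum_{|k|\leq A\log n +1}\chi_k$) and then bounds $|\oS_n^\pm - \oA_n|$, picking up two extra edge terms $\chi_{\pm k_0}$ with $k_0 = \lfloor A\log n\rfloor + 1$ that are handled by their support lying in $\{|u| \geq A\log n - 1\}$. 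Your route---drop the indicator first via \eqref{A-large-ineq}, then invoke the exact identity $\varphi = \sum_{|k|\leq A\log n}\chi_k\varphi + \Phi_n^\star$---is marginally cleaner because it avoids the sandwich and the edge terms altogether; the price is that you must check (as you do) that $\Psi_n$ extends continuously by zero at $\pm\infty$ so that $\widetilde\oS_n$ is well-defined on the event $\{u(S_n x) = \pm\infty\}$. Both arguments rest on the same key point, namely the uniform Lipschitz bound on $\Psi_n$ coming from $\psi(b)=0$, which the paper states but you explain more carefully.
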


\begin{proof}

Recall that $ \sum_{|k| \leq A\log n-1}(\chi_k\varphi)(w)\leq \mathbf 1_{  |u(w)| \leq  A \log n  } \cdot \varphi(w) \leq  \sum_{|k| \leq A\log n+1}(\chi_k\varphi)(w)$, so $\oS_n^- \leq \oS_n \leq \oS^+$, where
$$\oS_n^\pm := \sum_{|k| \leq A\log n  \pm 1} \mathbf E\Big(   \Psi_n\Big({\sigma(S_n,x) +u(S_n x) - n \gamma \over \sqrt n}\Big)  \,(\chi_k\varphi)(S_n x) \Big).$$
	
	Since $\mathbf 1_{(-\infty, b]}\psi\tau_n$ is Lipschitz with a Lipschitz norm bounded uniformly in $n$, we see that $\Psi_n'$ is bounded by a constant independent of $n$. Therefore, 
	$$\Big|\Psi_n\Big({\sigma(S_n,x) - n \gamma  - k\over \sqrt n}\Big) -\Psi_n\Big({\sigma(S_n,x) + u(S_n x) - n \gamma \over \sqrt n}\Big) \Big|\lesssim \Big|{u(S_n x)+k\over \sqrt n } \Big|,$$
	which is bounded by $1/\sqrt n$ when $S_n x\in \supp(\chi_k)$. Using that $\Psi_n$ is uniformly bounded, we have
\begin{align*}
|\oS_n^+ - \oA_n| \lesssim \sum_{|k| \leq A\log n} \mathbf E\Big({1\over \sqrt n}  (\chi_k\varphi)(S_n x)   \Big) + \E \big(\chi_{k_0}\varphi)(S_n x)\big)   + \,  \E \big(\chi_{-k_0}\varphi)(S_n x)\big)    + \E\big(\Phi_{n}^{\star}(S_n x)\big),
\end{align*}
where $k_0:=\lfloor A\log n\rfloor+1$.	Since $\sum \chi_k\varphi \leq \norm{\varphi}_\infty \leq 1$, the first term on the right hand side is bounded by $1/\sqrt n$. For the remaining terms,  observe that  $\chi_{\pm k_0}$ and  $\Phi_{n}^{\star}  $ are supported by $\big\{ |u| \geq A\log n - 1\big\}$, see Lemma \ref{lemma:norm-Phi}, so the last three terms above are bounded by a constant times $\mathbf P\big( |u(S_n x)| \geq A\log n - 1 \big)$. By the same argument as the ones showing \eqref{A-large-ineq}, the last quantity is $\lesssim 1/\sqrt n$. It follows that $|\oS_n^+ - \oA_n| \lesssim 1/\sqrt n$. An analogous argument shows that $|\oA_n - \oS_n^-| \lesssim 1/\sqrt n$. Since  $\oS_n^- \leq \oS_n \leq \oS^+$, the lemma follows.
\end{proof}

We now resume the proof of Theorem \ref{thm:BE-general}. Clearly, $\Psi_n$ is integrable and $\widehat{\Psi_n}$ is supported by $[-\delta_n^{-2},\delta_n^{-2}]$. Using the inverse Fourier transform for $\Psi_n$, Fubini's theorem and \eqref{eq:markov-op-iterate}, we have
\begin{align*}
\mathbf E \Big(  \Psi_n\Big( {\sigma(S_n, x) - n \gamma-k \over \sqrt n}\Big)& (\chi_k\varphi)(S_n x) \Big)=\int_{G} \Psi_n\Big( {\sigma(g, x) - n \gamma-k \over \sqrt n}\Big)  (\chi_k\varphi)(g x) \,\diff \mu^{*n}(g) \\
&={1\over 2\pi}\int_G\int_{-\infty}^\infty \widehat {\Psi_n}(\xi) e^{i\xi {\sigma(g,x)-n\gamma -k \over \sqrt n} }  (\chi_k\varphi)(g x) \,\diff \mu^{*n}(g) \diff \xi \\
&={1\over 2\pi}\int_{-\infty}^\infty \widehat{\Psi_n}(\xi)\cdot e^{-i\xi\sqrt n \gamma } e^{-i\xi{k\over \sqrt n}} \oP_{{i\xi\over \sqrt n}}^n (\chi_k\varphi)(x)  \,\diff \xi. 
\end{align*}
Similarly,
$$\mathbf E\Big(  \Psi_n\Big({\sigma(S_n,x) - n \gamma  \over \sqrt n}\Big) \Phi_{n}^{\star}(S_n x) \Big)  = {1\over 2\pi}\int_{-\infty}^\infty \widehat{\Psi_n}(\xi)\cdot e^{-i\xi\sqrt n \gamma }  \oP_{{i\xi\over \sqrt n}}^n \Phi_{n}^{\star}(x)  \,\diff \xi .$$
Since the support of $\widehat{\Psi_n}$ is contained in $[-\delta_n^{-2},\delta_n^{-2}]$, it follows that
$$\oA_n={1\over 2\pi}\int_{-\xi_0\sqrt n}^{\xi_0 \sqrt n} \widehat{\Psi_n}(\xi)\cdot e^{-i\xi\sqrt n \gamma } \oP_{{i\xi\over \sqrt n}}^n (\Phi_{n,-\xi}+ \Phi_{n}^{\star} )(x)  \,\diff \xi,$$
where $\Phi_{n}^{\star}$ and $\Phi_{n,\xi}$ are defined in \eqref{eq:Phi-star-def} and  \eqref{eq:Phi-xi-def} respectively.

 Using that the  Fourier transform of $e^{-{\varrho^2 t^2\over 2}} $ is $\sqrt{2\pi} \varrho^{-1} e^{-\frac{\xi^2}{2 \varrho^2}}$  we have that
\begin{eqnarray*}
	\oR_n = \oR\big(\Psi_n(t)\cdot \varphi(w)\big) &=&   {1\over \sqrt{2 \pi} \varrho} \int_{\P^{d-1}} \int_{-\infty}^\infty e^{-\frac{s^2}{2 \varrho^2}} \Psi_n(s)  \cdot \varphi(w)\, \diff s\diff\nu(w) \\
	&=&  {1\over 2\pi}  (\oN_0 \varphi)\int_{-\infty}^\infty\int_{-\infty}^\infty e^{-i\xi s}e^{-{\varrho^2 \xi^2\over 2}} \Psi_n (s)  \,\diff \xi \diff s \\
	&=&{1\over 2\pi}  (\oN_0 \varphi)\int_{-\infty}^\infty e^{-{\varrho^2 \xi^2\over 2}} \widehat{\Psi_n} (\xi)  \,\diff \xi.
\end{eqnarray*}
It follows that
$$\oA_n - \oR_n={1\over 2\pi}\int_{-\xi_0\sqrt n}^{\xi_0 \sqrt n} \widehat{\Psi_n}(\xi)\Big[ e^{-i\xi\sqrt n \gamma } \oP_{{i\xi\over \sqrt n}}^n (\Phi_{n,-\xi}+ \Phi_{n}^{\star} )(x) -   e^{-{\varrho^2 \xi^2\over 2}}\oN_0\varphi   \Big] \,\diff \xi.$$

\begin{lemma}\label{A-R-lemma}
	We have  $\big|\oA_n - \oR_n \big|\lesssim 1/\sqrt n$.
\end{lemma}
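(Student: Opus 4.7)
The plan is to substitute the spectral decomposition $\oP_z = \lambda_z \oN_z + \oQ_z$ from Proposition \ref{prop:spectral-decomp} into the bracket appearing in the expression for $\oA_n - \oR_n$, and to split the resulting integrand into three natural pieces
\begin{align*}
J_1(\xi) &:= \big[e^{-i\xi\sqrt n\gamma}\lambda_{i\xi/\sqrt n}^n - e^{-\varrho^2\xi^2/2}\big]\oN_{i\xi/\sqrt n}(\Phi_{n,-\xi}+\Phi_n^{\star})(x) \\
&\quad + e^{-\varrho^2\xi^2/2}\big[\oN_{i\xi/\sqrt n}-\oN_0\big](\Phi_{n,-\xi}+\Phi_n^{\star})(x), \\
J_2(\xi) &:= e^{-i\xi\sqrt n\gamma}\oQ_{i\xi/\sqrt n}^n(\Phi_{n,-\xi}+\Phi_n^{\star})(x), \\
J_3(\xi) &:= e^{-\varrho^2\xi^2/2}\big[\oN_0(\Phi_{n,-\xi}+\Phi_n^{\star})(x) - \oN_0\varphi\big],
\end{align*}
so that the bracket equals $J_1+J_2+J_3$. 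This mirrors the decomposition carried out in Lemmas \ref{lemma:theta-1-bound} and \ref{lemma:theta-2-bound}, but now the integral involves $\widehat{\Psi_n}(\xi)$ in place of $\widehat{\vartheta_{\delta_n}}(\xi)/\xi$.

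The crucial input is the bound $|\widehat{\Psi_n}(\xi)| \leq 5/|\xi|$ for $\xi\neq 0$ obtained in Lemma \ref{lemma-tau} (recall $\widehat{\Psi_n} = \widehat{\mathbf{1}_{(-\infty,b]}\psi\tau_n}\cdot \widehat{\vartheta_{\delta_n}}$ and $|\widehat{\vartheta_{\delta_n}}| \leq 1$). The apparent singularity of $5/|\xi|$ at the origin will be exactly absorbed by the fact that $J_1$ and $J_3$ vanish at $\xi=0$: indeed $\lambda_0 = 1$ gives $e^{-i\xi\sqrt n\gamma}\lambda_{i\xi/\sqrt n}^n - e^{-\varrho^2\xi^2/2}\big|_{\xi=0}=0$ and Lemma \ref{lemma:norm-Phi} gives $\Phi_{n,0}+\Phi_n^{\star}=\varphi$, so $J_3(0)=0$.

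For $J_1$ on $|\xi|\leq \sqrt[6] n$, Lemma \ref{lemma:lambda-estimates} gives $|e^{-i\xi\sqrt n\gamma}\lambda_{i\xi/\sqrt n}^n - e^{-\varrho^2\xi^2/2}| \lesssim |\xi|^3 e^{-\varrho^2\xi^2/2}/\sqrt n$, the analyticity of $z\mapsto \oN_z$ from Proposition \ref{prop:spectral-decomp} gives $\|\oN_{i\xi/\sqrt n}-\oN_0\|_{\Cc^\alpha} \lesssim |\xi|/\sqrt n$, and the argument of Lemma \ref{lemma:theta-1-bound}, Case 2 (using $\alpha A \leq 1/6$) shows $\|\Phi_{n,-\xi}+\Phi_n^{\star}\|_{\Cc^\alpha}\lesssim 1$; altogether $|J_1(\xi)| \lesssim (|\xi|+|\xi|^3)\,e^{-\varrho^2\xi^2/4}/\sqrt n$, so
$$\bigl|\widehat{\Psi_n}(\xi) J_1(\xi)\bigr| \,\lesssim\, \frac{1+\xi^2}{\sqrt n}\,e^{-\varrho^2\xi^2/4},$$
which integrates to $O(1/\sqrt n)$. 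On $\sqrt[6] n < |\xi|\leq \xi_0\sqrt n$, the estimates $|\lambda^n_{i\xi/\sqrt n}|\leq e^{-\varrho^2\xi^2/3}$ and $|e^{-i\xi\sqrt n\gamma}\lambda^n_{i\xi/\sqrt n} - e^{-\varrho^2\xi^2/2}| \lesssim e^{-\varrho^2\xi^2/4}/\sqrt n$ give super-polynomially small integrand, which absorbs the coarser bound $\|\Phi_{n,-\xi}+\Phi_n^{\star}\|_{\Cc^\alpha}\lesssim n^{1/6}$. For $J_3$, the identity in Lemma \ref{lemma:norm-Phi} together with the bound $\oN_0(\chi_k\varphi)\leq \nu\{|u|\geq |k|-1\}\lesssim e^{-\eta_*|k|}$ from Definition \ref{def:admissible}(1) yields $|J_3(\xi)|\lesssim |\xi|\,e^{-\varrho^2\xi^2/2}/\sqrt n$, so $|\widehat{\Psi_n}(\xi)J_3(\xi)| \lesssim e^{-\varrho^2\xi^2/2}/\sqrt n$, again integrating to $O(1/\sqrt n)$. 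Finally, for $J_2$ Proposition \ref{prop:spectral-decomp}(5) gives $\|\oQ^n_{i\xi/\sqrt n}\|_{\Cc^\alpha}\lesssim \beta^n$ for some $0<\beta<1$, so $|J_2|\lesssim \beta^n n^{1/6}$ and, using the two bounds $|\widehat{\Psi_n}|\leq 2\sqrt n + 2$ on $|\xi|\leq 1$ and $|\widehat{\Psi_n}|\leq 5/|\xi|$ on $|\xi|>1$, we obtain $\int|\widehat{\Psi_n}\,J_2|\,d\xi\lesssim \beta^n n^{1/6}(\sqrt n + \log n)$, which is $o(1/\sqrt n)$.

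The main bookkeeping obstacle is making sure that at every scale of $|\xi|$ the factor $1/|\xi|$ in $\widehat{\Psi_n}$ is matched by a compensating $|\xi|$ from the spectral differences, so that all resulting integrands are bounded by Gaussians times $1/\sqrt n$; this is the content of checking that $J_1, J_3$ are $O(|\xi|)$ near the origin and using the exponential smallness of $\oQ^n$ for $J_2$. Once these bounds are assembled, summing the three integral estimates gives $|\oA_n - \oR_n|\lesssim 1/\sqrt n$, as required.
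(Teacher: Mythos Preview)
Your proof is correct and follows essentially the same strategy as the paper: decompose the bracket via $\oP_z = \lambda_z\oN_z + \oQ_z$, use the bound $|\widehat{\Psi_n}(\xi)|\lesssim 1/|\xi|$ from Lemma~\ref{lemma-tau} to absorb the $O(|\xi|)$ vanishing of the $\oN$-pieces at the origin, and invoke the estimates already established in Lemmas~\ref{lemma:theta-1-bound} and~\ref{lemma:theta-2-bound}. The only cosmetic difference is that the paper further splits the $\oQ$-contribution as $\oQ^n_{i\xi/\sqrt n}=(\oQ^n_{i\xi/\sqrt n}-\oQ^n_0)+\oQ^n_0$ and uses the uniform bound $|\widehat{\Psi_n}|\leq 2\sqrt n+2$ on the whole interval for the $\oQ^n_0$ piece, whereas you keep $J_2$ intact and instead split the \emph{domain} $|\xi|\lessgtr 1$; your route is marginally more direct and equally valid.
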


\begin{proof}
	Define
	$$ \Lambda_1(\xi):=  e^{-i\xi\sqrt n \gamma} \lambda_{{i\xi\over \sqrt n}}^n \oN_{{i\xi\over \sqrt n}} (\Phi_{n,-\xi} +\Phi_{n}^{\star}  ) (x) -      e^{-{\varrho^2 \xi^2\over 2}}   \oN_0 (\Phi_{n,-\xi} +\Phi_{n}^{\star}  )  (x), $$ 
	$$\Lambda_2(\xi):= e^{-i\xi\sqrt n \gamma} \oQ_{{i\xi\over \sqrt n}}^n  (\Phi_{n,-\xi} +\Phi_{n}^{\star}  )(x) -e^{-i\xi\sqrt n \gamma}\oQ_0^n  (\Phi_{n,-\xi} +\Phi_{n}^{\star}  )(x),  $$
	$$ \Lambda_3(\xi):=e^{-i\xi\sqrt n \gamma}\oQ_0^n  (\Phi_{n,-\xi} +\Phi_{n}^{\star}  )(x)$$
	and
	$$\Omega_1(\xi):= e^{-{\varrho^2 \xi^2\over 2}}  \oN_0 (\Phi_{n,-\xi} +  \Phi_{n}^{\star}) - e^{-{\varrho^2 \xi^2\over 2}}\oN_0\varphi.$$
	Then,
	$$e^{-i\xi\sqrt n \gamma } \oP_{{i\xi\over \sqrt n}}^n (\Phi_{n,-\xi}+ \Phi_{n}^{\star} )(x) -   e^{-{\varrho^2 \xi^2\over 2}}\oN_0\varphi =\Lambda_1(\xi)+\Lambda_2(\xi)+\Lambda_3(\xi)+\Omega_1(\xi).$$
	
	From Lemma \ref{lemma-tau}, we see that $\big|\widehat{\Psi_n}(\xi)\big|\lesssim 1/ |\xi|$ for $\xi\neq 0$. Using this estimate, we can repeat the arguments in the proofs of Lemmas \ref{lemma:theta-1-bound}  and \ref{lemma:theta-2-bound} and deduce that 
	$$\int_{-\xi_0\sqrt n}^{\xi_0 \sqrt n}\Big| \widehat{\Psi_n}(\xi)\Lambda_1(\xi) \Big|\,\diff \xi,\quad \int_{-\xi_0\sqrt n}^{\xi_0 \sqrt n}\Big| \widehat{\Psi_n}(\xi)\Lambda_2(\xi) \Big|\,\diff \xi,\quad \int_{-\xi_0\sqrt n}^{\xi_0 \sqrt n}\Big| \widehat{\Psi_n}(\xi)\Omega_1(\xi) \Big|\,\diff \xi   $$
	are all $\lesssim 1/\sqrt n$. 
	
	It remains to estimate the integral involving $\Lambda_3$. 
	Recall from Lemma \ref{lemma-tau} that $\big|\widehat{\Psi_n}(\xi)\big|\leq 2\sqrt n +2$. Using $\big|\oQ_0^n  (\Phi_{n,-\xi} +\Phi_{n}^{\star}  )(x) \big|\lesssim \beta^n\norm{ \Phi_{n,-\xi} +\Phi_{n}^{\star}  }_{\Cc^\alpha}\lesssim \beta^n \sqrt[6] n$ for some $0<\beta<1$, see Lemma \ref{lemma:norm-Phi}, we get 
	$$\int_{-\xi_0\sqrt n}^{\xi_0 \sqrt n}\Big| \widehat{\Psi_n}(\xi)\Lambda_3(\xi) \Big|\,\diff \xi\lesssim \sqrt n (2\sqrt n+2)\beta^n \sqrt[6] n\lesssim {1\over \sqrt n}. $$
	This ends the proof of the lemma.
\end{proof}

\proof[End of the proof of  Theorem \ref{thm:BE-general}]
From the discussion after Lemma \ref{lemma-tau}, the theorem will follow if we show that \eqref{goal-psi-varphi} holds. This estimate is an immediate consequence of  Lemmas \ref{A-S-lemma} and \ref{A-R-lemma}. The proof of the theorem is complete.
\endproof

\section{Local Limit Theorem} \label{sec:LLT}

In this section, we prove Theorem \ref{thm:LLT-general}. We fix $x \in \P^{d-1}$ from now on. For a function $\Phi$ on $\R\times\P^{d-1}$, define
\begin{equation*} 
\Ac_n(\Phi)(t) := \sqrt{n}  \,\mathbf E \Big( \Phi\Big( t+ \sigma(S_n,x)+u(S_nx) - n \gamma, S_n x\Big) \Big)
\end{equation*}
and
	$$\Gc_n(\Phi)(t) := \frac{1}{\sqrt{2 \pi} \varrho} e^{-\frac{t^2}{2 \varrho^2 n}}  \int_{\R\times\P^{d-1}} \Phi(s,w)\,\diff s\diff \nu(w).$$
	
Then, the conclusion of	Theorem \ref{thm:LLT-general} is equivalent to
\begin{equation} \label{eq:LLT-main-limit}
\lim_{n \to \infty} \sup_{t\in\R} \big|\Ac_n(\Phi)(t) - \Gc_n(\Phi)(t)  \big| = 0.
\end{equation}

\begin{proposition} \label{prop:LLT-product-test-fcn}
Theorem \ref{thm:LLT-general} holds when $\Phi(s,w) = \psi(s) \varphi(w)$ with $\psi$ continuous with compact support in $\R$ and $\varphi \in \Cc^\alpha(\P^{d-1})$.
\end{proposition}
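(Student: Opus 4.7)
The plan is to extend the Fourier inversion method of Theorem \ref{thm:BE-general} to the LLT setting, taking advantage of the product form $\Phi(s,w)=\psi(s)\varphi(w)$. Applying Fourier inversion to $\psi$, together with the identity $\mathbf E(e^{i\xi\sigma(S_n,x)}f(S_nx)) = \oP_{i\xi}^n f(x)$, gives
\[
\Ac_n(\psi\varphi)(t) \;=\; \frac{\sqrt n}{2\pi}\int \widehat\psi(\xi)\,e^{i\xi(t-n\gamma)}\,\oP_{i\xi}^n(e^{i\xi u}\varphi)(x)\,d\xi,
\]
while the Fourier representation of the Gaussian yields
\[
\Gc_n(\psi\varphi)(t) \;=\; \frac{\sqrt n}{2\pi}\int e^{i\xi t}\,e^{-\varrho^2 n\xi^2/2}\,\widehat\psi(0)\,\oN_0\varphi\,d\xi.
\]
Setting $H_n(\xi) := \widehat\psi(\xi)\,e^{-i\xi n\gamma}\,\oP_{i\xi}^n(e^{i\xi u}\varphi)(x) - \widehat\psi(0)\,\oN_0\varphi\,e^{-\varrho^2 n\xi^2/2}$, the triangle inequality produces the uniform bound $\sup_t|\Ac_n(\psi\varphi)(t) - \Gc_n(\psi\varphi)(t)| \leq \frac{\sqrt n}{2\pi}\int|H_n(\xi)|\,d\xi$, and the task reduces to showing this last quantity tends to $0$ as $n\to\infty$.

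First I would reduce to the case where $\widehat\psi$ has compact support. Writing $\varphi=\varphi^+-\varphi^-$ with $\varphi^\pm\geq 0$, Lemma \ref{lemma:conv-fourier-approx} furnishes smooth functions $\psi_\delta^\pm$ with $\psi_\delta^-\leq \psi\leq \psi_\delta^+$, $\widehat{\psi_\delta^\pm}$ supported in $[-\delta^{-2},\delta^{-2}]$, and $\|\psi_\delta^\pm - \psi\|_{L^1}\to 0$ as $\delta\to 0$; since $|\Gc_n(\psi_\delta^\pm\varphi)(t) - \Gc_n(\psi\varphi)(t)| \leq \frac{1}{\sqrt{2\pi}\varrho}\|\psi_\delta^\pm - \psi\|_{L^1}\,|\oN_0\varphi|$ uniformly in $t$, the sandwich reduces the problem to $\psi$ with $\widehat\psi$ supported in a fixed interval $[-T,T]$. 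To handle the singularities of $u$, I would then decompose $e^{i\xi u}\varphi = \Phi^\flat_\xi + \Phi^\sharp_\xi$ using the partition of unity $\chi_k$ of Lemma \ref{lemma:partition-of-unity}, with $\Phi^\flat_\xi := \sum_{|k|\leq M}\chi_k e^{i\xi u}\varphi$ and $\Phi^\sharp_\xi$ supported on $\{|u|\geq M-1\}$. The estimates $\oN_0\chi_k\leq \nu\{|u|\geq |k|-1\}\lesssim e^{-\eta_*|k|}$ together with $\|\chi_k\|_{\Cc^\alpha}\lesssim e^{\alpha|k|}$ (choosing $\alpha<\eta_*$) and Proposition \ref{prop:LDT-admissible} control the contribution of $\Phi^\sharp_\xi$, which can be made uniformly small by taking $M$ large. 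For $\Phi^\flat_\xi$, one has $\|\Phi^\flat_\xi\|_{\Cc^\alpha}\lesssim (1+|\xi|)e^{\alpha M}$.

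I would then split the $\xi$-integral at $|\xi|=\xi_0$ where $\xi_0$ satisfies Lemma \ref{lemma:lambda-estimates}. On $\xi_0<|\xi|\leq T$, Proposition \ref{prop:spec-Pxi} gives $\|\oP_{i\xi}^n\|_{\Cc^\alpha}\lesssim \rho^n$ with $\rho<1$ depending on $\xi_0,T$, so this range contributes $O(\sqrt n\,\rho^n e^{\alpha M})\to 0$ as $n\to\infty$ for fixed $M$. On $|\xi|\leq \xi_0$ I would use the spectral decomposition $\oP_{i\xi}^n = \lambda_{i\xi}^n\oN_{i\xi}+\oQ_{i\xi}^n$ from Proposition \ref{prop:spectral-decomp}. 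The $\oQ_{i\xi}^n$ piece contributes $O(\sqrt n\beta^n)$, vanishing in the limit. For the principal term, changing variables $\xi=\eta/\sqrt n$ and applying Lemma \ref{lemma:lambda-estimates} replaces $e^{-i\xi n\gamma}\lambda_{i\xi}^n$ by $e^{-\varrho^2\eta^2/2}$ with errors integrable against the Gaussian weight. The remaining step is to compare $\oN_{i\xi}\Phi^\flat_\xi$ with $\oN_0\varphi$ using the analyticity of $\xi\mapsto\oN_{i\xi}$ at $0$ and the identity $e^{i\xi u(w)} = e^{-i\xi k}(1 + O(|\xi|(|k|+1)))$ valid on $\supp\chi_k$; combined with $\sum_{k}|k|e^{-\eta_*|k|}<\infty$ this yields a correction of size $O(|\eta|\sqrt{\log n}/\sqrt n)$, sufficient to conclude. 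Letting $n\to\infty$ first, then $M\to\infty$, and finally $\delta\to 0$ completes the argument.

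The main obstacle is this last comparison step: one must replace $\oN_{i\xi}\Phi^\flat_\xi$ by $\oN_0\varphi$ uniformly for small $\xi$ without destroying the $e^{\alpha M}$ growth coming from the partition truncation, and simultaneously balance the $n$-dependent decays $\rho^n,\beta^n$ against the $M$-dependent and $\delta$-dependent losses, so that the three limits $n\to\infty$, $M\to\infty$, $\delta\to 0$ can be taken in sequence to obtain a uniform estimate in $t$. Unlike the Berry--Esseen setting, where the natural $1/\sqrt n$ scale tolerates $O(\log n)$-sized partitions, here the absence of such a built-in scale requires that the truncation $M$ be sent to infinity after $n$, which forces the tail estimate on $\Phi^\sharp_\xi$ to be genuinely uniform in $n$.
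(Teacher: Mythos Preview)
There is a genuine gap in the treatment of the tail term $\Phi^\sharp_\xi$. After your reduction to compactly supported $\widehat\psi\subset[-T,T]$, the contribution of $\Phi^\sharp_\xi$ to $\Ac_n(\psi\varphi)(t)$ is
\[
\frac{\sqrt n}{2\pi}\int_{-T}^{T}\big|\widehat\psi(\xi)\big|\,\big|\oP_{i\xi}^n\Phi^\sharp_\xi(x)\big|\,\diff\xi
\ \lesssim\ \sqrt n\cdot T\cdot \mathbf P\big(|u(S_nx)|>M-1\big)
\ \lesssim\ \sqrt n\,e^{-cM},
\]
since $\Phi^\sharp_\xi$ is not in $\Cc^\alpha$ and the only available bound is the pointwise one $|\oP_{i\xi}^n\Phi^\sharp_\xi(x)|\le\|\varphi\|_\infty\,\mathbf P(|u(S_nx)|>M-1)$ coming from Proposition~\ref{prop:LDT-admissible}. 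For fixed $M$ this quantity diverges as $n\to\infty$, so the order of limits you propose---first $n\to\infty$, then $M\to\infty$---cannot close. The attempt to bound the tail ``spectrally'' via $\oN_0\chi_k\lesssim e^{-\eta_*|k|}$ and $\|\chi_k\|_{\Cc^\alpha}\lesssim e^{\alpha|k|}$ does not help either: the correction $(\oN_{i\xi}-\oN_0)(\chi_k e^{i\xi u}\varphi)$ is of size $|\xi|\,e^{\alpha|k|}$, and summing this over $|k|>M$ diverges.

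Your diagnosis in the last paragraph is in fact inverted. There \emph{is} a built-in scale: the $\sqrt n$ prefactor in $\Ac_n$ is exactly compensated by Corollary~\ref{cor:LDT-admissible}, which gives $\mathbf P(|u(S_nx)|\ge A\log n)\lesssim 1/n$. Thus the paper takes the truncation level to be $n$-dependent, $M=A\log n$, precisely as in the Berry--Esseen proof, so that the tail contributes $O(1/\sqrt n)$. With $\alpha A\le 1/6$ one then has $\|\Phi^\flat_\xi\|_{\Cc^\alpha}\lesssim n^{1/6}$, which is harmless against the $\beta^n$ and $\rho_\delta^n$ decays. A second difference worth noting is that the paper avoids inserting $e^{i\xi u}$ into $\oP_{i\xi}^n$ altogether: after reducing to $\psi,\varphi\ge 0$, it uses that on $\supp\chi_k$ one has $u\in(-k-1,-k+1)$, so $\psi(t+\sigma(S_n,x)+u(S_nx)-n\gamma)$ is sandwiched between values $\psi(t+\sigma(S_n,x)+t_k^\pm-n\gamma)$ for suitable $t_k^\pm\in[-k-1,-k+1]$. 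This replaces $u$ by a piecewise constant \emph{before} Fourier inversion, so one applies $\oP_{i\xi}^n$ only to the genuinely H\"older functions $\chi_k\varphi$, and the analogue of Lemma~\ref{lemma:norm-Phi} goes through unchanged. The order of limits is then $n\to\infty$ first (for each fixed $\delta$), followed by $\delta\to0$; no separate $M$-limit is needed.
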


As in Section \ref{sec:BE}, we can assume that  ${1\over 2}\leq \varphi\leq 2$,  $\oN_0 \varphi=1$ and $\norm{\varphi}_{\Cc^\alpha}\leq 2$, since the problem is linear on $\varphi$ and these functions span the space $\Cc^\alpha(\P^{d-1})$. We can also assume that  $\psi$ is non-negative and $\|\psi\|_\infty \leq 1$. We first consider the upper bound in the limit \eqref{eq:LLT-main-limit}. The lower bound is analogous and will be treated in the end of the proof.

 For $t \in \R$ and $k \in \Z$, let $t_k \in [-k-1,-k+1]$ be a point such that $$\sup_{s \in [-k-1,-k+1]}  \psi(t+ \sigma(S_n,x) + s - n \gamma) =  \psi(t+ \sigma(S_n,x) + t_k - n \gamma).$$
  We note that the point $t_k$ might not be unique and can depend on $t,n$ and $x$, but this won't be an issue in the proof. Therefore, we omit this dependence from our notation.
  
   For later use, consider the translates of $\psi$ given by $$\psi_{t,k}(s):=\psi(s+t+t_k).$$
 
 Observe that, since $\psi$ is compactly supported, for fixed $t,s\in\R$, we have that $\psi_{t,k}(s)\neq 0$ for only finitely many $k$'s.
 
 From Corollary \ref{cor:LDT-admissible}, there is a constant $A>0$ such that
 \begin{equation*}
\mu^{*n} \big\{g\in G:\,  |u(gx)| \geq  A \log n \big\} \lesssim 1 / n.
\end{equation*}

	In order to simplify the notation, for a function $\Psi$ on $\R \times \P^{d-1}$, let
	\begin{equation*} 
\Ec_n\big(\Psi\big):= \sqrt {n} \, \mathbf E\Big( \Psi\big( \sigma(S_n,x)-n\gamma ,S_n x \big)  \Big),
\end{equation*}
and, as in Section \ref{sec:BE}, consider the function on $\P^{d-1}$ given by
	\begin{equation*}
		\Phi_{n}^{\star}  (w):= \varphi(w) - \sum_{|k| \leq A\log n} (\chi_k\varphi)(w),
	\end{equation*}	
	where $\chi_k$ are the functions in Lemma \ref{lemma:partition-of-unity}.  We use the same notation as in Section \ref{sec:BE}.
	
	Denote $\Ac_n(t):= \Ac_n(\psi \cdot \varphi)(t)$ for simplicity and set
	$$\Bc_n(t):=  \sum_{|k| \leq A \log n} \Ec_n\big(\psi_{t,k}\cdot \chi_k \varphi  \big)+   \Ec_n\big(\psi_{t,0}\cdot \Phi_n^\star \big).$$
	
	\begin{lemma} \label{lemma:llt-ineq-1}
	There exists a constant $C_1>0$,  independent of $n$,  such that,  for all $t \in \R$,   $$\Ac_n(t) \leq \Bc_n(t) + C_1 / \sqrt n.$$
	\end{lemma}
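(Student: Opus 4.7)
The plan is to split $\Ac_n(t)$ according to the partition $\varphi = \sum_{|k| \leq A\log n}\chi_k\varphi + \Phi_n^\star$ and bound each piece separately: the main sum by the defining property of $t_k$ (turning a dependence on $u(S_nx)$ into an evaluation at $t_k$), and the remainder by Corollary~\ref{cor:LDT-admissible}. The estimate is purely pointwise before taking expectation, so no extra regularity is needed.

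For each $k$ with $|k|\leq A\log n$, Lemma~\ref{lemma:partition-of-unity}(1) tells us that $\chi_k$ is supported on $\Tc^u_k$, so whenever $\chi_k(S_nx)\neq 0$ one has $u(S_nx)\in[-k-1,-k+1]$. By the very definition of $t_k$ as a maximizer of $s\mapsto \psi(t+\sigma(S_n,x)+s-n\gamma)$ on that interval, this gives the sample-wise inequality
$$\psi\bigl(t+\sigma(S_n,x)+u(S_nx)-n\gamma\bigr)\,\chi_k(S_nx)\;\leq\;\psi_{t,k}\bigl(\sigma(S_n,x)-n\gamma\bigr)\,\chi_k(S_nx).$$
Multiplying by $\varphi(S_nx)\geq 0$, taking expectation, multiplying by $\sqrt n$ and summing over $|k|\leq A\log n$ produces exactly $\sum_k \Ec_n(\psi_{t,k}\cdot\chi_k\varphi)$, which is the main contribution to $\Bc_n(t)$. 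Note that although $t_k$ depends on the random quantity $\sigma(S_n,x)$, this causes no issue because the inequality above is pointwise; measurability of $t_k$ as an argmax of a continuous function over a compact interval is standard.

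For the remainder, $\Phi_n^\star(w)\neq 0$ forces $w\in\bigcup_{|k|>A\log n}\Tc^u_k$, hence $|u(w)|\geq A\log n-1$. Using $\|\psi\|_\infty\leq 1$ together with $0\leq \Phi_n^\star\leq \varphi\leq 2$ and Corollary~\ref{cor:LDT-admissible} (choosing $A$ large at the start, and absorbing the $-1$ shift for $n$ large), one obtains
$$\sqrt n\,\E\bigl(\psi(\cdots)\,\Phi_n^\star(S_nx)\bigr)\;\leq\;2\sqrt n\,\mathbf P\bigl(|u(S_nx)|\geq A\log n-1\bigr)\;\lesssim\;\frac{1}{\sqrt n}.$$
This is the $C_1/\sqrt n$ error term.

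Finally, $\Phi_n^\star\geq 0$ pointwise on $\P^{d-1}$: off $\Sigma_u$ this follows from $\sum_k\chi_k=1$ and $0\leq\chi_k\leq 1$, while on $\Sigma_u$ every $\chi_k$ vanishes (since $\widetilde\chi$ has compact support on $\R$) so $\Phi_n^\star=\varphi\geq 0$ there. Consequently $\Ec_n(\psi_{t,0}\cdot\Phi_n^\star)\geq 0$ can be added freely to the right-hand side, which upgrades the sum $\sum_k \Ec_n(\psi_{t,k}\cdot\chi_k\varphi)+C_1/\sqrt n$ to $\Bc_n(t)+C_1/\sqrt n$, uniformly in $t\in\R$. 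There is no real obstacle here; the only thing to be careful about is matching the support conventions of $\chi_k$ (closed vs.\ open interval for $u$) with the domain $[-k-1,-k+1]$ over which $t_k$ is defined, which is automatic.
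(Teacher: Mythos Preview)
Your proof is correct and follows essentially the same approach as the paper's: both exploit the defining maximizing property of $t_k$ on $\supp(\chi_k)$ to bound the main sum, and use the large-deviation estimate (Corollary~\ref{cor:LDT-admissible}) to control the remainder supported in $\{|u|\geq A\log n-1\}$. Your organization via the exact splitting $\varphi=\sum_{|k|\leq A\log n}\chi_k\varphi+\Phi_n^\star$ is marginally cleaner than the paper's (which first restricts to $\{|u(S_nx)|\leq A\log n\}$ and then handles two extra boundary terms $\chi_{\pm k_0}$), but the core argument is identical.
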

	
\begin{proof}
Using that $\mathbf P \big(|u(S_n x)| \geq A \log n \big) \lesssim 1 / n$, we obtain
\begin{align*}
	\Ac_n(t) \leq \sqrt{n}  \,   \mathbf E \Big(  \psi(t+ \sigma(S_n,x) + u (S_n x) - n \gamma)  \mathbf 1_{|u(S_n x)| \leq A \log n} \varphi(S_n x) \Big)    + O \big( 1 / \sqrt n \big) .
	\end{align*}
	Observe that,  when $S_n x \in\supp(\chi_k)$, we have $-k-1 \leq u(S_n x) \leq -k+1$,  so in this case
	$$  \psi(t+ \sigma(S_n,x) + u(S_nx) - n \gamma) \leq \psi(t+ \sigma(S_n,x) + t_k - n \gamma) = \psi_{t,k}\big(\sigma(S_n,x)  - n \gamma\big),$$
	where we have used the definitions of $t_k$ and $\psi_{t,k}$. 	Using that $\mathbf 1_{|u|\leq A \log n} \leq \sum_{|k| \leq A \log n + 1} \chi_k$, ${1\over 2}\leq \varphi\leq 2$ and taking the expectation, it follows that
	\begin{align*}
	&\mathbf E \Big(  \psi(t+ \sigma(S_n,x) + u (S_n x) - n \gamma)  \mathbf 1_{|u(S_n x)| \leq A \log n} \varphi(S_n x) \Big) \\
	&\leq  \sum_{|k| \leq A \log n + 1} \E \Big( \psi_{t,k}\big(\sigma(S_n,x)-n\gamma\big)(\chi_k \varphi)(S_n x) \Big)    \\ &\leq  \sum_{|k| \leq A \log n} \E \Big(\psi_{t,k}\big(\sigma(S_n,x)-n\gamma\big) (\chi_k \varphi) (S_n x) \Big) +   2 \E\Big(\chi_{k_0}(S_n x) \Big) + 2\E\Big(\chi_{-k_0}(S_n x) \Big),
	\end{align*}
where $k_0:= \lfloor A \log n \rfloor + 1$.  

Since $\chi_{k_0}$ and $\chi_{-k_0}$ are both supported by $\{|u| > A \log n -1\}$ and bounded by one, we see that the last two terms above are bounded by $\mathbf P \big( |u(S_n x)| \geq  A \log n - 1 \big) \lesssim 1 / n$. Hence, there is a constant $C_1 >0$ such that
	\begin{equation*}
	\Ac_n(t) \leq	\sum_{|k| \leq A \log n} \Ec_n\big(\psi_{t,k}\cdot \chi_k \varphi  \big)+{C_1 \over \sqrt n} \leq \Bc_n(t)  +{C_1 \over \sqrt n},
	\end{equation*}
	proving the lemma.
\end{proof}

	By Lemma \ref{lemma:conv-fourier-approx}, for every $0<\delta \leq 1$, there exists a smooth function $\psi^+_{\delta}$  such that $\widehat {\psi^+_{\delta}}$ has support in $[-\delta^{-2},\delta^{-2}]$,  $$\psi\leq \psi^+_\delta,\quad \lim_{\delta\to 0} \psi^+_{\delta} =\psi    \quad \text{and} \quad  \lim_{\delta\to 0} \big \|\psi^+_{\delta} -\psi \big \|_{L^1} = 0.$$ 
	Moreover,  $\norm{\psi_{\delta}^+}_\infty$, $\norm{\psi_{\delta}^+}_{L^1}$ and $\|\widehat{\psi^+_{\delta}}\|_{\Cc^1}$ are bounded by a constant independent of $\delta$.
	
	As above, for $t\in\R$ and $k\in\N$, we consider the translations
	$$ \psi_{t,k}^+(s):=\psi_{\delta}^+(s+t+t_k). $$
	We omit the dependence on $\delta$ in order to ease the notation. Define also
	\begin{equation} \label{eq:R-def}
	\Bc_n^+(t):= \sum_{|k| \leq A \log n} \Ec_n\big(\psi_{t,k}^+\cdot \chi_k \varphi  \big)+  \Ec_n\big(\psi_{t,0}^+\cdot \Phi_n^\star  \big).	
\end{equation}

	Clearly, we have $\Bc_n(t)\leq \Bc_n^+(t)$. From the definition of $\Ec_n$, Fourier inversion formula and Fubini's theorem, we have
	\begin{align*}
	\Ec_n\big(\psi_{t,k}^+\cdot \chi_k \varphi \big)&=\sqrt n \, \int_{G} \psi_{\delta}^+\big(\sigma(g,x)-n\gamma+t+t_k\big) \cdot (\chi_k \varphi)(gx) \,\diff \mu^{*n}(g)\\
	&={\sqrt n\over 2\pi}\int_{G} \int_{-\infty}^\infty \widehat{\psi_{\delta}^+}(\xi) e^{i\xi(\sigma(g,x)-n\gamma+t+t_k )} \cdot (\chi_k \varphi)(gx) \,\diff \xi\diff\mu^{*n}(g)\\
	&={\sqrt n\over 2\pi}\int_{-\infty}^\infty  \widehat{\psi_{\delta}^+}(\xi) e^{i\xi(t+t_k)}\cdot e^{-i\xi n\gamma}\oP^n_{i\xi}(\chi_k \varphi)(x) \,\diff \xi,
	\end{align*}
	where in the last step we have used \eqref{eq:markov-op-iterate}.
	
	Recall that $\supp\big( \widehat{\psi_{\delta}^+} \big)\subset [-\delta^{-2},\delta^{-2}]$. So, after the change of variables $\xi \mapsto \xi / \sqrt n$, the above identity becomes 
\begin{equation} \label{eq:E_n-psi+}
	\Ec_n\big(\psi_{t,k}^+\cdot\chi_k \varphi \big) ={1\over 2\pi}\int_{-\delta^{-2} \sqrt n}^{\delta^{-2} \sqrt n} \widehat{\psi_{\delta}^+}\Big({\xi\over \sqrt n}\Big) e^{i\xi{t+t_k\over \sqrt n}}\cdot e^{-i\xi \sqrt n\gamma}\oP^n_{{i\xi\over \sqrt n}}(\chi_k \varphi)(x) \,\diff \xi.
\end{equation}

	A similar computation yields $$\Ec_n\big(\psi_{t,0}^+ \cdot \Phi_n^\star \big) = {1\over 2\pi}\int_{-\delta^{-2} \sqrt n}^{\delta^{-2} \sqrt n} \widehat{\psi_{\delta}^+}\Big({\xi\over \sqrt n}\Big) e^{i\xi{t \over \sqrt n}}\cdot e^{-i\xi \sqrt n\gamma}\oP^n_{{i\xi\over \sqrt n}}\Phi_{n}^{\star} (x) \,\diff \xi.$$

	Define
	\begin{equation} \label{eq:Phi-xi-def-2}
	\Phi_{n,\xi} (w):= \sum_{|k| \leq A \log n}  e^{i \xi{t_k \over \sqrt n}}(\chi_k\varphi)(w).
	\end{equation}
	We use again the same notation as in Section \ref{sec:BE} to denote a slightly different function. We use here the factor $-t_k$ instead of $k$. Recall that $t_k \in [-k-1,-k+1]$. Using this notation and the above computations, \eqref{eq:R-def} becomes
\begin{equation} \label{eq:R-formula} 
\Bc_n^+(t)= {1\over 2\pi}\int_{-\delta^{-2} \sqrt n}^{\delta^{-2} \sqrt n} \widehat{\psi_{\delta}^+}\Big({\xi\over \sqrt n}\Big) e^{i\xi{t\over \sqrt n}}\cdot e^{-i \xi \sqrt n\gamma}\oP_{{i\xi\over \sqrt n}}^n(\Phi_{n,\xi} +\Phi_{n}^{\star}  )(x) \,\diff \xi.
\end{equation}

Fix $\alpha>0$ such that $$\alpha A\leq 1/6 \quad \text{ and } \quad \alpha \leq \alpha_0,$$ where $0<\alpha_0<1$ is the exponent appearing in Theorem \ref{thm:spectral-gap}. Then, all the results of Subsection \ref{subsec:markov-op} apply to the operators $\xi\mapsto\oP_{i\xi}$ acting on $\Cc^\alpha(\P^{d-1})$.

The following result is the analogue of Lemma \ref{lemma:norm-Phi}. The fact that $|t_k| \leq |k| + 1$ can be used to show that the same estimates are still valid in the present case.  Recall that we are assuming that $\norm{\varphi}_{\Cc^\alpha} \leq 2$.

\begin{lemma}  \label{lemma:norm-Phi-2}
	Let $\Phi_{n,\xi}, \Phi_{n}^{\star}$ be the functions on $\P^{d-1}$ defined above.  Then,  the following identity holds
	\begin{equation} \label{eq:psi_xi+psi_T-2}
	\Phi_{n,\xi} + \Phi_{n}^{\star} = \mathbf \varphi + \sum_{|k| \leq A \log n} \big(  e^{i \xi{t_k \over \sqrt n}} - 1 \big) \chi_k\varphi.
	\end{equation}
	Moreover,  for every exponent $0<\alpha\leq \alpha_*$, we have $\norm{\chi_k\varphi}_{\Cc^\alpha}\lesssim e^{\alpha |k|}$ and there  is a constant $C>0$ independent of $\xi$ and $n$ such that   
	\begin{equation}  \label{eq:norm-Phi-2}
	\norm{\Phi_{n,\xi} }_{\Cc^\alpha}\leq C \,  n^{\alpha A} \quad\text{and}\quad  \norm{\Phi_{n}^{\star}  }_{\Cc^\alpha} \leq C \,  n^{\alpha A}.
	\end{equation}
	In addition,  $\Phi_{n}^{\star}  $ is supported by $\big\{|u| \geq A \log n -1\big\}$.
\end{lemma}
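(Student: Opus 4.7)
The plan is to mirror the proof of Lemma \ref{lemma:norm-Phi}, observing that the only structural difference is the replacement of the integer weights $k$ by the real numbers $t_k \in [-k-1,-k+1]$. Since $|t_k| \leq |k|+1$ and the phase factors $e^{i\xi t_k/\sqrt n}$ have modulus one, the estimates go through with the same constants (up to an inessential factor).

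For the identity \eqref{eq:psi_xi+psi_T-2}, I would start from the definition $\Phi_{n}^{\star} = \varphi - \sum_{|k|\leq A\log n}\chi_k\varphi$ and add $\Phi_{n,\xi}$, then collect terms as
\begin{equation*}
\Phi_{n,\xi} + \Phi_n^\star = \varphi + \sum_{|k|\leq A\log n} \bigl(e^{i\xi t_k/\sqrt n}-1\bigr)\chi_k\varphi.
\end{equation*}
The bound $\|\chi_k\varphi\|_{\Cc^\alpha}\lesssim e^{\alpha|k|}$ would then follow from the standard product estimate $\|fg\|_{\Cc^\alpha}\lesssim \|f\|_{\Cc^\alpha}\|g\|_{\Cc^\alpha}$ combined with Lemma \ref{lemma:partition-of-unity}(4) (giving $\|\chi_k\|_{\Cc^\alpha}\lesssim e^{\alpha|k|}$) and the assumption $\|\varphi\|_{\Cc^\alpha}\leq 2$.

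For the Hölder norm estimates, I would bound $\|\Phi_{n,\xi}\|_{\Cc^\alpha}$ by a triangle inequality: since $|e^{i\xi t_k/\sqrt n}|=1$ and each phase factor is constant on $\P^{d-1}$, one has $\|\Phi_{n,\xi}\|_{\Cc^\alpha}\leq \sum_{|k|\leq A\log n}\|\chi_k\varphi\|_{\Cc^\alpha}\lesssim \sum_{|k|\leq A\log n}e^{\alpha|k|}\lesssim e^{\alpha A\log n}=n^{\alpha A}$, which is the claimed bound. The same reasoning applied to $\Phi_n^\star = \varphi - \sum_{|k|\leq A\log n}\chi_k\varphi$ gives $\|\Phi_n^\star\|_{\Cc^\alpha}\lesssim n^{\alpha A}$.

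Finally, for the support statement, I would argue that if $w\in \P^{d-1}$ satisfies $|u(w)| < A\log n - 1$, then there exists an integer $k_0$ with $|k_0|\leq A\log n$ and $|u(w)+k_0|<1$, hence $w\in \Tc^u_{k_0}$. By Lemma \ref{lemma:partition-of-unity}(3), $\sum_{k\in\Z}\chi_k(w)=1$, and by (1) only those $\chi_k$ with $|k|\leq A\log n$ can be non-zero at $w$, so $\sum_{|k|\leq A\log n}\chi_k(w)=1$ and therefore $\Phi_n^\star(w)=0$. There is no real obstacle here: the lemma is essentially a repackaging of Lemma \ref{lemma:norm-Phi} with $t_k$ in place of $k$, and the verification that the bound $|t_k|\leq |k|+1$ preserves every estimate up to constants is the entire content of the proof.
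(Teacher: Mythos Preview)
Your proof is correct and follows exactly the approach the paper indicates: the paper does not give a separate proof of this lemma but simply remarks that it is the analogue of Lemma~\ref{lemma:norm-Phi} and that the bound $|t_k|\leq |k|+1$ makes all the estimates go through unchanged. You have spelled out precisely those details (the algebraic identity, the product estimate $\|\chi_k\varphi\|_{\Cc^\alpha}\lesssim e^{\alpha|k|}$ via Lemma~\ref{lemma:partition-of-unity}(4), the geometric summation giving $n^{\alpha A}$, and the support argument via Lemma~\ref{lemma:partition-of-unity}(1)--(3)), which is all that is needed.
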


 	Define  
 	\begin{equation} \label{eq:S-def} 
 	\Gc_n^+(t):={1\over 2\pi}\widehat{\psi_{\delta}^+}(0)  \int_{-\infty}^\infty e^{i\xi{t\over \sqrt n}} e^{-{ \varrho^2\xi^2 \over 2}}  \,\diff \xi ={1\over \sqrt{2\pi} \, \varrho} e^{-{t^2\over2\varrho^2 n}}\int_{\R} \psi_{\delta}^+ (s)\,\diff s,
 	\end{equation}
	where in the second equality we have used the fact that the inverse Fourier transform of $e^{-{ \varrho^2\xi^2 \over 2}}$ is $ {1\over \sqrt{2\pi} \, \varrho}  e^{-{t^2\over2\varrho^2}}$.

   \begin{lemma}\label{lemma-R-S}
	Fix $0< \delta \leq 1$. Then, there exists a constant $C_{\delta}>0$ such that, for all $n \geq 1$,
	$$\sup_{t\in\R}\big|\Bc_n^+(t)-\Gc_n^+(t) \big| \leq {C_{\delta}\over \sqrt[3] n}.$$
	\end{lemma}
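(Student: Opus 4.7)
The plan is to split the integration domain in \eqref{eq:R-formula} into a low-frequency region $|\xi| \leq \xi_0 \sqrt n$ (with $\xi_0$ from Lemma \ref{lemma:lambda-estimates}) and a high-frequency region $\xi_0 \sqrt n < |\xi| \leq \delta^{-2}\sqrt n$, and correspondingly to rewrite $\Gc_n^+(t)$ as the same integral on $[-\xi_0\sqrt n, \xi_0\sqrt n]$ plus a Gaussian tail of size $O(e^{-cn})$. For the high-frequency part of $\Bc_n^+(t)$, the parameter $\xi/\sqrt n$ ranges in the compact set $K := \{\eta\in\R : \xi_0 \leq |\eta| \leq \delta^{-2}\} \subset \R \setminus \{0\}$, so Proposition \ref{prop:spec-Pxi} yields $\|\oP_{i\xi/\sqrt n}^n\|_{\Cc^\alpha} \leq C_K \rho_K^n$. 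Combined with $\|\widehat{\psi_\delta^+}\|_\infty \leq C_\delta$ and $\|\Phi_{n,\xi}+\Phi_n^\star\|_{\Cc^\alpha} \lesssim n^{\alpha A}$ from Lemma \ref{lemma:norm-Phi-2}, integration over an interval of length $O(\sqrt n)$ yields an exponentially small (hence negligible) contribution, where the dependence on $\delta$ is absorbed into $C_\delta$ through the $\delta$-dependence of $K$.

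For the low-frequency part, I would apply the spectral decomposition $\oP_{i\xi/\sqrt n}^n = \lambda_{i\xi/\sqrt n}^n \oN_{i\xi/\sqrt n} + \oQ_{i\xi/\sqrt n}^n$ of Proposition \ref{prop:spectral-decomp}. The $\oQ_{i\xi/\sqrt n}^n$ piece is pointwise bounded by $\beta^n n^{\alpha A}$ and its integral is exponentially small. It then remains to compare $e^{-i\xi\sqrt n \gamma} \lambda_{i\xi/\sqrt n}^n \oN_{i\xi/\sqrt n}(\Phi_{n,\xi}+\Phi_n^\star)(x)$ with $e^{-\varrho^2\xi^2/2}$, weighted by $\widehat{\psi_\delta^+}(\xi/\sqrt n)$ vs.\ $\widehat{\psi_\delta^+}(0)$. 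I would expand the error as a telescoping sum over three replacements: (i) $\widehat{\psi_\delta^+}(\xi/\sqrt n)\mapsto \widehat{\psi_\delta^+}(0)$, controlled via $\|\widehat{\psi_\delta^+}\|_{\Cc^1}\leq C_\delta$; (ii) $\oN_{i\xi/\sqrt n}(\Phi_{n,\xi}+\Phi_n^\star)(x)\mapsto 1$; and (iii) $e^{-i\xi\sqrt n\gamma}\lambda_{i\xi/\sqrt n}^n \mapsto e^{-\varrho^2\xi^2/2}$, controlled by Lemma \ref{lemma:lambda-estimates} (splitting $|\xi|\leq n^{1/6}$ versus $n^{1/6}<|\xi|\leq \xi_0\sqrt n$ as in Lemma \ref{lemma:theta-1-bound}). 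Since $|e^{i\xi t/\sqrt n}|=1$, all the resulting bounds are uniform in $t$, and replacements (i) and (iii) contribute $O(C_\delta/\sqrt n)$.

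The decisive estimate is (ii). I would split it as $[\oN_{i\xi/\sqrt n}-\oN_0](\Phi_{n,\xi}+\Phi_n^\star)(x) + \big(\oN_0(\Phi_{n,\xi}+\Phi_n^\star)-1\big)$. The first piece is bounded by $C|\xi|n^{\alpha A}/\sqrt n$ via the analyticity of $z\mapsto\oN_z$ (Proposition \ref{prop:spectral-decomp}(4)) together with Lemma \ref{lemma:norm-Phi-2}. For the second, identity \eqref{eq:psi_xi+psi_T-2} gives $\oN_0(\Phi_{n,\xi}+\Phi_n^\star)-1 = \sum_{|k|\leq A\log n}(e^{i\xi t_k/\sqrt n}-1)\oN_0(\chi_k\varphi)$; then Property (1) of Definition \ref{def:admissible} yields $\oN_0(\chi_k\varphi)\leq \|\varphi\|_\infty\,\nu\{|u|\geq|k|-1\} \lesssim e^{-\eta_*|k|}$, which combined with $|e^{i\xi t_k/\sqrt n}-1| \lesssim |\xi|(|k|+1)/\sqrt n$ gives a bound of $C|\xi|/\sqrt n$. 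After multiplying by $|\lambda_{i\xi/\sqrt n}^n| \leq e^{-\varrho^2\xi^2/3}$ and integrating, replacement (ii) contributes $O(n^{\alpha A - 1/2})$, which equals $O(n^{-1/3})$ by the standing assumption $\alpha A \leq 1/6$. The main obstacle is exactly this last balance: the cube-root rate is forced by the competition between the $\Cc^\alpha$-growth of $\Phi_{n,\xi}+\Phi_n^\star$ (arising from Property (2) of admissibility through the partition of unity of Lemma \ref{lemma:partition-of-unity}) and the first-order expansion of $\oN_z$ at the origin, so the admissibility of $u$ enters in two essentially different ways that must be accounted for simultaneously.
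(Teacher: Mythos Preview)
Your proposal is correct and follows essentially the same approach as the paper: the five pieces in the paper's decomposition $\Lambda_n^1,\ldots,\Lambda_n^5$ (with $\Lambda_n^1$ further split into $\Gamma_n^1,\Gamma_n^2,\Gamma_n^3$) correspond exactly to your high-frequency part, Gaussian tail, $\oQ$-piece, and the three telescoping replacements.

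One small caveat: if the telescoping is performed literally in the order (i), (ii), (iii) as you list it, then in step (i) the factor $\oN_{i\xi/\sqrt n}(\Phi_{n,\xi}+\Phi_n^\star)(x)$ is still present, and since $\oN_z$ is only known to be bounded on $\Cc^\alpha$ (not on $L^\infty$), you must use $\|\Phi_{n,\xi}+\Phi_n^\star\|_{\Cc^\alpha}\lesssim n^{\alpha A}$ there as well, giving $O(n^{-1/3})$ rather than $O(n^{-1/2})$ for (i). The paper avoids this by performing the $\oN_{i\xi/\sqrt n}\to\oN_0$ replacement first (its $\Lambda_n^2$), after which $|\oN_0(\Phi_{n,\xi}+\Phi_n^\star)|\leq \oN_0\varphi=1$ is available and the remaining replacements (their $\Gamma_n^1,\Gamma_n^2,\Gamma_n^3$) each cost only $O(n^{-1/2})$. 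Either way the final bound is $C_\delta/\sqrt[3]n$, so this does not affect the outcome.
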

	\begin{proof}
	Let $\xi_0>0$ be a small constant, so that Lemma \ref{lemma:lambda-estimates} applies. In particular, the  decomposition of $\oP_z$ in Proposition \ref{prop:spectral-decomp} holds for $|z| \leq \xi_0$. Using that decomposition, \eqref{eq:R-formula} and \eqref{eq:S-def}, we can write $$\Bc_n^+(t)-\Gc_n^+(t) = \Lambda_n^1(t) + \Lambda_n^2(t)  + \Lambda_n^3(t)  + \Lambda_n^4(t)  + \Lambda_n^5(t),$$ where
	$$\Lambda_n^1(t):={1\over 2\pi}\int_{-\xi_0 \sqrt n}^{\xi_0 \sqrt n} e^{i\xi{t\over \sqrt n}} \Big[ \widehat{\psi_{\delta}^+}\Big({\xi\over \sqrt n}\Big)e^{-i \xi \sqrt n\gamma} \lambda_{{i\xi\over \sqrt n}}^n\oN_0(\Phi_{n,\xi} +\Phi_{n}^{\star}  )-\widehat{\psi_{\delta}^+}(0)    e^{-{ \varrho^2\xi^2 \over 2}}\Big]  \,\diff \xi ,$$
	$$\Lambda_n^2(t):= {1\over 2\pi}\int_{-\xi_0 \sqrt n}^{\xi_0 \sqrt n} e^{i\xi{t\over \sqrt n}}\Big[ \widehat{\psi_{\delta}^+}\Big({\xi\over \sqrt n}\Big) e^{-i \xi \sqrt n\gamma} \lambda_{{i\xi\over \sqrt n}}^n\big(\oN_{{i\xi\over \sqrt n}}-\oN_0\big)(\Phi_{n,\xi} +\Phi_{n}^{\star}  ) (x) \Big]  \,\diff \xi , $$
	$$\Lambda_n^3(t):=  {1\over 2\pi}\int_{-\xi_0 \sqrt n}^{\xi_0 \sqrt n} e^{i\xi{t\over \sqrt n}}  \widehat{\psi_{\delta}^+}\Big({\xi\over \sqrt n}\Big)  e^{-i \xi \sqrt n\gamma} \oQ_{{i\xi\over \sqrt n}}^n(\Phi_{n,\xi} +\Phi_{n}^{\star}  )(x)   \,\diff \xi,   $$
	$$ \Lambda_n^4(t):=  {1\over 2\pi}\int_{\xi_0\sqrt n \leq|\xi|\leq\delta^{-2} \sqrt n}e^{i\xi{t\over \sqrt n}} \widehat{\psi_{\delta}^+}\Big({\xi\over \sqrt n}\Big) e^{-i \xi \sqrt n\gamma} \oP_{{i\xi\over \sqrt n}}^n(\Phi_{n,\xi} +\Phi_{n}^{\star}  )(x) \,\diff \xi   $$
	and 
	$$ \Lambda_n^5(t):=  - {1\over 2\pi}\widehat{\psi_{\delta}^+}(0)  \int_{|\xi|\geq \xi_0 \sqrt n} e^{i\xi{t\over \sqrt n}}  e^{-{ \varrho^2\xi^2 \over 2}} \,\diff \xi. $$
	\medskip
	
	We will bound each $\Lambda_n^j$, $j=1,\ldots, 5$,  separately. We will use that $$\norm{\Phi_{n,\xi} +\Phi_{n}^{\star}  }_{\Cc^\alpha}\leq 2C \, n^{\alpha A} \leq 2C \, \sqrt[6] n$$ for every $\xi$, after Lemma \ref{lemma:norm-Phi-2} and the choice of $\alpha$ and $A$. 
	
	In order to bound $\Lambda_n^2$, we have, using the analyticity of $\xi\mapsto \oN_{i\xi}$, that
	$$ \Big\| \big(\oN_{{i\xi\over \sqrt n}}-\oN_0\big)(\Phi_{n,\xi} +\Phi_{n}^{\star}  ) \Big\|_\infty \lesssim  {|\xi|\over \sqrt n} \norm{\Phi_{n,\xi} +\Phi_{n}^{\star}  }_{\Cc^\alpha}\leq  { 2C |\xi|\over \sqrt[3] n}.$$
	Recall, from Lemma \ref{lemma:lambda-estimates}, that $\big|\lambda_{{i\xi\over \sqrt n}}^n\big|\leq e^{-{\varrho^2\xi^2\over 3}}$ for $|\xi|\leq \xi_0 \sqrt n$. 	Since $\|\widehat{\psi^\pm_{\delta}}\|_{\Cc^1}$ is bounded uniformly in $\delta$, we get 
	$$\sup_{t\in\R} \big|\Lambda_n^2(t)\big|\lesssim  \int_{-\infty}^{\infty} e^{-{\varrho^2\xi^2\over 3}}  {2C |\xi|\over \sqrt[3] n} \,\diff \xi \lesssim {1 \over \sqrt[3] n}. $$
	
	For  $\Lambda_n^3$, we use that $\norm{\oQ^n_z}_{\Cc^\alpha} \leq c \beta^n$ for $|z| \leq \xi_0$, where $c>0$ and $0<\beta<1$ are constants, see Proposition \ref{prop:spectral-decomp}. Therefore, for $|\xi|\leq \xi_0\sqrt n$,
	$$\Big\| \oQ_{{i\xi\over \sqrt n}}^n(\Phi_{n,\xi} +\Phi_{n}^{\star}  ) \Big\|_\infty \lesssim \beta^n\norm{\Phi_{n,\xi} +\Phi_{n}^{\star}  }_{\Cc^\alpha} \leq 2 C  \beta^n  \sqrt[6] n,    $$
which gives
	$$\sup_{t\in\R} \big|\Lambda_n^3(t)\big|\lesssim \int_{-\xi_0 \sqrt n}^{\xi_0 \sqrt n}  2 C  \beta^n  \sqrt[6] n \,\diff \xi = 4\xi_0 C \sqrt n  \beta^n \sqrt[6] n \lesssim \frac{1}{\sqrt n}.$$

	In order to bound  $\Lambda_n^4$,  we use that, after Proposition \ref{prop:spec-Pxi}, there are constants $A_\delta>0$ and $0<\rho_\delta<1$ such that $\norm{\oP^n_{i\xi}}_{\Cc^\alpha}\leq A_\delta \rho_\delta^n$ for all $\xi_0\leq |\xi|\leq \delta^{-2}$ and $n \geq 1$. Therefore,
	$$\sup_{t\in\R}  \big|\Lambda_n^4(t)\big|\lesssim \int_{\xi_0\sqrt n \leq|\xi|\leq\delta^{-2} \sqrt n} A_\delta \rho_\delta^n \sqrt[6] n  \,\diff \xi \leq  2  \delta^{-2}\sqrt n A_\delta \rho_\delta^n \sqrt[6] n \lesssim \frac{C_{\delta}'}{\sqrt n},$$
	for some constant $C_{\delta}'>0$.
	
	The modulus of the term $\Lambda_n^5$ is clearly $\lesssim 1/\sqrt n$, so it only remains to estimate $\Lambda_n^1$. For every $t \in \R$, we have $$\big| \Lambda_n^1(t) \big|\leq \Gamma_n^1+\Gamma_n^2+\Gamma_n^3,$$ where 
	$$\Gamma_n^1:= {1\over 2\pi}\int_{-\xi_0 \sqrt n}^{\xi_0 \sqrt n} \Big| \widehat{\psi_{\delta}^+}\Big({\xi\over \sqrt n}\Big) \Big| \, \big|\lambda_{{i\xi\over \sqrt n}}^n \big| \cdot\Big| \oN_0(\Phi_{n,\xi} +\Phi_{n}^{\star}  )- 1 \Big|  \,\diff \xi , $$
	$$\Gamma_n^2:= {1\over 2\pi}\int_{-\xi_0 \sqrt n}^{\xi_0 \sqrt n} \big|\lambda_{{i\xi\over \sqrt n}}^n\big|\cdot \Big| \widehat{\psi_{\delta}^+}\Big({\xi\over \sqrt n}\Big) -\widehat{\psi_{\delta}^+}(0) \Big|  \,\diff \xi $$
	and
	$$\Gamma_n^3:= {1\over 2\pi}\int_{-\xi_0 \sqrt n}^{\xi_0 \sqrt n} \big| \widehat{\psi_{\delta}^+}(0) \big| \cdot\Big| e^{-i \xi \sqrt n\gamma} \lambda_{{i\xi\over \sqrt n}}^n-    e^{-{ \varrho^2\xi^2 \over 2}}\Big|  \,\diff \xi. $$
	
	Recall that $\chi_k$ is bounded by one, supported by  $\Tc^u_k \subset \{|u| \geq |k| - 1\}$ (see Lemma \ref{lemma:partition-of-unity}) and $\frac12 \leq \varphi \leq 2$ by assumption. Therefore,
	$$\oN_0 (\chi_k\varphi) = \int_{\P^{d-1}} \chi_k\varphi \, \diff \nu \leq 2\, \nu \{|u| \geq |k| - 1\}   \lesssim e^{-\eta_* |k|},$$
	where in the last step we have used Property (1) of Definition \ref{def:admissible}.

	Using \eqref{eq:psi_xi+psi_T-2}, the assumption that $\oN_0 \varphi = 1$ and recalling that $t_k \in [-k-1,-k+1]$,  we get 
	\begin{align*}
	\Big| \oN_0(\Phi_{n,\xi} +\Phi_{n}^{\star}  )- 1 \Big| &= \Big| \oN_0(\Phi_{n,\xi} +\Phi_{n}^{\star}  )-\oN_0 \varphi \Big|\leq \sum_{|k|\leq  A \log n} \big| e^{-i \xi{t_k\over \sqrt n}}-1 \big|\oN_0 (\chi_k \varphi) \\
	&\lesssim \sum_{ k \geq 0} |\xi|{|t_k| \over \sqrt n}e^{- \eta_* |k|} \leq \sum_{ k \geq 0} |\xi|{|k| + 1 \over \sqrt n}e^{- \eta_* |k|} \lesssim {|\xi|\over \sqrt n}.
	\end{align*}
	
	Using that $\big|\lambda_{{i\xi\over \sqrt n}}^n\big|\leq e^{-{\varrho^2\xi^2\over 3}}$ for $|\xi|\leq \xi_0 \sqrt n$ (Lemma \ref{lemma:lambda-estimates}) and that  $\|\widehat{\psi^+_{\delta}}\|_{\Cc^1}$  is uniformly bounded,  we get that 
	$$ \Gamma_n^1\lesssim  \int_{-\xi_0 \sqrt n}^{\xi_0 \sqrt n}  \|\widehat{\psi^+_{\delta}}\|_{\infty} e^{-{\varrho^2\xi^2\over 3}} {|\xi|\over \sqrt n}  \,\diff \xi\lesssim {1 \over \sqrt n}$$
	and
	$$\Gamma_n^2\lesssim  \int_{-\xi_0 \sqrt n}^{\xi_0 \sqrt n} e^{-{\varrho^2\xi^2\over 3}} {|\xi|\over \sqrt n} \|\widehat{\psi^+_{\delta}}\|_{\Cc^1} \,\diff \xi\lesssim {1\over \sqrt n}.$$

	The bound $\Gamma_n^3\lesssim 1/\sqrt n$ follows by splitting the integral along the intervals $|\xi|\leq \sqrt[6] n$ and $\sqrt[6] n< |\xi| \leq \xi_0\sqrt n$ and using Lemma \ref{lemma:lambda-estimates}.
	
	 We conclude that $$\sup_{t\in\R} \big|\Lambda_n^1(t)\big|\lesssim \frac{1}{\sqrt n}.$$

	Gathering the above estimates, we obtain $$\sup_{t\in\R}\big|\Bc_n^+(t)-\Gc_n^+(t) \big|\lesssim \frac{1}{\sqrt n} + {1 \over \sqrt[3] n} +  \frac{1}{\sqrt n} +  \frac{C_{\delta}'}{\sqrt n} + \frac{1}{\sqrt n}.$$
Hence,  the above quantity is bounded by  $C_{\delta} / \sqrt[3] n$ for some constant $C_{\delta} > 0$. This finishes the proof of the lemma.
		\end{proof}
		
		We can now finish the proof Proposition \ref{prop:LLT-product-test-fcn}.
		
\begin{proof}[Proof Proposition \ref{prop:LLT-product-test-fcn}]
Recall from the beginning of the proof that we can assume that  ${1\over 2}\leq \varphi\leq 2$,  $\oN_0 \varphi=1$ and $\norm{\varphi}_{\Cc^\alpha}\leq 2$. 
From \eqref{eq:LLT-main-limit}, our goal is to show that $\sup_{t \in \R} |\Ac_n(t) - \Gc_n(t)|$ tends to zero as $n \to \infty$, where $\Ac_n(t)= \Ac_n(\psi \cdot \varphi)(t)$ and $$\Gc_n(t):= \Gc_n(\psi \cdot \varphi)(t) =  \frac{1}{\sqrt{2 \pi} \varrho} e^{-\frac{t^2}{2 \varrho^2 n}}  \int_{\R} \psi(s) \,\diff s \int_{\P^{d-1}} \varphi(w) \diff \nu(w) =  \frac{1}{\sqrt{2 \pi} \varrho} e^{-\frac{t^2}{2 \varrho^2 n}}  \int_{\R} \psi(s) \,\diff s,$$
since $\oN_0 \varphi = 1$.

Fix $0<\delta \leq 1$. Lemmas \ref{lemma:llt-ineq-1} and \ref{lemma-R-S} and the fact that $\Bc_n(t) \leq \Bc_n^+(t)$ give that
	$$\Ac_n(t) \leq \Gc_n^+(t) + {C_{\delta}\over \sqrt[3] n}+{C_1 \over \sqrt n} \quad \text{for all } \,\, t \in \R.$$

	Recall, from \eqref{eq:S-def},  that $\Gc_n^+(t) = {1\over \sqrt{2\pi} \, \varrho} e^{-{t^2\over2\varrho^2 n}}\int_{\R} \psi_{\delta}^+ (s)\,\diff s$. Hence, for every fixed $n$,  
	$$ \big|  \Gc_n^+(t) - \Gc_n(t)  \big| \leq {1\over \sqrt{2\pi} \, \varrho}  \big\|   \psi_{\delta}^+ -\psi\big\|_{L^1},$$ yielding
	$$ \Ac_n(t) -   \Gc_n(t) \leq   {1\over \sqrt{2\pi} \, \varrho}  \big\|   \psi_{\delta}^+ -\psi\big\|_{L^1} +   {C_{\delta}\over \sqrt[3] n}+{C_1 \over \sqrt n}.$$
 Therefore,	
\begin{equation} \label{eq:LLT-product-lim-sup}
\limsup_{n\to \infty}  \sup_{t\in\R} \big( \Ac_n(t) -  \Gc_n(t) \big) \leq  {1\over \sqrt{2\pi} \, \varrho}  \big\|   \psi_{\delta}^+ -\psi\big\|_{L^1}.
\end{equation}
		
		In order two deal with the lower bound, we repeat the above arguments replacing $t_k$ by a point $t_k' \in [-k-1,-k+1]$  such that $$\inf_{s \in [-k-1,-k+1]}  \psi(t+ \sigma(S_n,x) + s - n \gamma) = \psi(t+ \sigma(S_n,x) + t_k' - n \gamma)$$ and the upper approximation $\psi_{\delta}^+$ by the lower approximation $\psi^-_{\delta} \leq \psi$ from Lemma \ref{lemma:conv-fourier-approx}. See \cite{DKW:BE-LLT-coeff} for comparison. In this case, we conclude that		
\begin{equation} \label{eq:LLT-product-lim-inf}
\liminf_{n\to \infty}  \inf_{t\in\R} \big( \Ac_n(t) -   \Gc_n(t)  \big)  \geq  -   {1\over \sqrt{2\pi} \, \varrho}  \big\|    \psi^-_{\delta} - \psi \big \|_{L^1}.
\end{equation}

From \eqref{eq:LLT-product-lim-sup}, \eqref{eq:LLT-product-lim-inf} and the fact that $\big\|\psi_{\delta}^\pm -\psi\big\|_{L^1}$ tends to zero as $\delta \to 0$ by Lemma \ref{lemma:conv-fourier-approx}, we conclude that $\sup_{t \in \R} |\Ac_n(t) - \Gc_n(t)|$ tends to zero as $n \to \infty$. The proposition follows.
\end{proof}

We now give the proof of Theorem \ref{thm:LLT-general} in the general case.

\begin{proof}[Proof of  Theorem \ref{thm:LLT-general}]
Let $\Phi(s,w)$ be an arbitrary function on $\R \times \P^{d-1}$ which is continuous and has compact support. We need to show that \eqref{eq:LLT-main-limit} holds for $\Phi$. Fix $\varepsilon > 0$. Using Stone-Weierstrass Theorem, one can find an integer $N_\varepsilon >0$ and functions $\Phi^\pm$ of the form  $\Phi^\pm (s,w): =\sum_{1\leq j\leq N_\varepsilon} \psi_j^\pm(s) \varphi_j^\pm(w)$ where $\psi_j$ is continuous with compact support in $\R$ and $\varphi_j^\pm \in \Cc^\alpha(\P^{d-1})$, such that $\Phi^- \leq \Phi \leq \Phi^+$ and $\|\Phi^\pm - \Phi\|_{L^1(\Leb \otimes \nu)} \leq \sqrt{2 \pi} \varrho \, \varepsilon$. By Proposition \ref{prop:LLT-product-test-fcn}, \eqref{eq:LLT-main-limit} holds for $\Phi^\pm$. In particular, there exists $n_0 \in \N$ such that $$ \sup_{t\in\R} \big|\Ac_n(\Phi^\pm)(t) - \Gc_n(\Phi^\pm)(t)  \big| < \varepsilon \quad \text{for} \quad n \geq n_0.$$
Observe that, from the above properties of $\Phi^\pm$ and the definition of $\Gc_n$, we have that $\big| \Gc_n(\Phi)(t) - \Gc_n(\Phi^\pm)(t)  \big| \leq \varepsilon$ for all $t \in \R$ and every $n \geq 1$. Since $\Ac_n$ and $\Gc_n$ are positive functionals, we obtain the following inequalities for $n \geq n_0$ : 
$$\Ac_n(\Phi)(t) - \Gc_n(\Phi)(t) \leq \Ac_n(\Phi^+)(t) - \Gc_n(\Phi)(t) \leq \Ac_n(\Phi^+)(t) - \Gc_n(\Phi^+)(t)+ \varepsilon < 2 \varepsilon $$ and 
$$- 2\varepsilon <  \Ac_n(\Phi^-)(t) - \Gc_n(\Phi^-)(t) - \varepsilon \leq \Ac_n(\Phi^-)(t) - \Gc_n(\Phi)(t)\leq \Ac_n(\Phi)(t) - \Gc_n(\Phi)(t).$$ We conclude that $\big|\Ac_n(\Phi)(t) - \Gc_n(\Phi)(t) \big| < 2\varepsilon$ for all $t \in \R$ and $n \geq n_0$. Since $\varepsilon>0$ is arbitrary, this gives \eqref{eq:LLT-main-limit}. This finishes the proof of the theorem.
\end{proof}

\section{Remarks on the Local Limit Theorem} \label{sec:LLT-D}

In this section, we discuss a refinement of Theorem \ref{thm:LLT-general} that allows moderate deviations. More precisely,  we let the parameter $t$ in the LLT depend on $n$ in the range $|t| = o(\sqrt n)$, see \cite{petrov} for the case of sums of i.i.d.'s.  Notice that we use here a smaller class of admissible functions $\Lc_{s_0}(\eta_*, \alpha_*,A_*)$, which is introduced in Definition \ref{def:admissible-2} below.  We can recover Theorem \ref{thm:LLT-general} for those functions. As before, this class contains the functions $u=0$ and $u(x) = \log d(x,H_y)$, see Lemma \ref{lemma:log-dist-admissible-2}.

The Cram\' er function is an analytic function in the real line whose coefficients are polynomials in the derivatives of the function $\Lambda(s):= \log \lambda_s$ at $s=0$, where $\lambda_s$ is the leading eigenvalue of the perturbed Markov operator \eqref{eq:markov-op-def} for $z=s \in \R$, see Proposition \ref{prop:spectral-decomp}. More precisely, if $\gamma_m := \Lambda^{(m)}(0)$, $m \geq 1$, then
\begin{equation*}
\zeta(t)=\frac{\gamma_3}{6\gamma_2^{3/2} } + \frac{\gamma_4\gamma_2-3\gamma_3^2}{24\gamma_2^3}t
+ \frac{\gamma_5\gamma_2^2-10\gamma_4\gamma_3\gamma_2 + 15\gamma_3^3}{120\gamma_2^{9/2}}t^2 + \cdots, 
\end{equation*}
which is convergent near $t=0$. See \cite{petrov} for more on this and similar functions and their role in the  classical limit theorems with moderate deviations for sums of independent random variables.

\begin{theorem}[Local Limit Theorem with moderate deviations] \label{thm:LLT-D}
	Let $\mu$ be a  probability measure on $\GL_d(\R)$.  Assume that $\mu$ has a finite exponential moment and  that $\Gamma_\mu$ is proximal and strongly irreducible.  Let $\gamma, \varrho$ and $\nu$ be as in the Introduction. If $u\in \Lc_{s_0}(\eta_*, \alpha_*,A_*)$ and $\Phi$  is a continuous function with compact support on $\R\times \P^{d-1}$, then, for $|t| = o(\sqrt n)$ we have
\begin{align*}
\sqrt{n}  \,\E \Big( \Phi\Big( \sigma(S_n,x)+u(S_nx)& - n \gamma - \sqrt n \varrho t, S_n x\Big) \Big) \\ &= \frac{1}{\sqrt{2\pi} \varrho}   e^{-\frac{t^2}{2} + \frac{t^3}{\sqrt n} \zeta  (\frac{t}{\sqrt n})} \Big( \int_{\R\times\P^{d-1}} \Phi(s,w)\,\diff s\diff \nu(w) + o(1) \Big)
\end{align*}
as $n \to \infty$.
\end{theorem}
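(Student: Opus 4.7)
The plan is to adapt the proof of Theorem \ref{thm:LLT-general} by introducing Cramér's exponential tilting method to handle the moderate-deviation regime $|t|=o(\sqrt n)$. First, I reduce to product-form test functions $\Phi(s,w)=\psi(s)\varphi(w)$ via the Stone--Weierstrass argument at the end of Section \ref{sec:LLT}, and may assume $\oN_0\varphi=1$, $\|\varphi\|_{\Cc^\alpha}\leq 2$. Next, I apply the partition of unity $\{\chi_k\}$ of Lemma \ref{lemma:partition-of-unity} together with Corollary \ref{cor:LDT-admissible}, as in Lemma \ref{lemma:llt-ineq-1}, to absorb the term $u(S_nx)$ into translates $\psi_{t,k}$ of $\psi$ centered at the deviation point $\sqrt n\varrho t$. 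The smaller class $\Lc_{s_0}(\eta_*,\alpha_*,A_*)$ is designed precisely so that the partition-of-unity bounds $\oN_0(\chi_k\varphi)\lesssim e^{-\eta_*|k|}$ and the $\Cc^\alpha$-estimates of Lemma \ref{lemma:norm-Phi-2} survive the real tilting by the saddle point $s_n$ introduced below.

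After Fourier inversion, mirroring \eqref{eq:E_n-psi+}--\eqref{eq:R-formula}, the quantity to analyze becomes
\begin{equation*}
\frac{1}{2\pi}\int_{-\delta^{-2}\sqrt n}^{\delta^{-2}\sqrt n} \widehat\psi\Big(\frac{\xi}{\sqrt n}\Big)\,e^{-i\xi(\sqrt n\gamma+\varrho t)}\,\oP^n_{i\xi/\sqrt n}(\Phi_{n,\xi}+\Phi_n^\star)(x)\,\diff\xi.
\end{equation*}
The central new ingredient is a contour shift $\xi\mapsto\xi-is_n\sqrt n$, where $s_n\in\R$ is the saddle point determined by $\Lambda'(s_n)=\gamma+\varrho t/\sqrt n$, with $\Lambda(z):=\log\lambda_z$. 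Since $\Lambda'(0)=\gamma$ and $\Lambda''(0)=\varrho^2>0$, the implicit function theorem yields $s_n=t/(\varrho\sqrt n)+O(t^2/n)$, hence $|s_n|<\epsilon_0$ for $n$ large (using $t=o(\sqrt n)$); this places the shifted contour in the analyticity domain of $z\mapsto\oP_z$ guaranteed by Proposition \ref{prop:spectral-decomp}. After the shift, the integrand acquires the prefactor $e^{-s_n(n\gamma+\sqrt n\varrho t)}\lambda^n_{s_n+i\xi/\sqrt n}$, whose central value at $\xi=0$ equals $\exp\bigl\{n\bigl[\Lambda(s_n)-(\gamma+\varrho t/\sqrt n)s_n\bigr]\bigr\}$. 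From the Legendre-transform identity $\frac{\diff}{\diff y}\bigl[\Lambda(s(y))-(\gamma+\varrho y)s(y)\bigr]=-\varrho s(y)$ and the Taylor inversion of $\Lambda'$ near $0$, one gets the exact expansion
\begin{equation*}
n\bigl[\Lambda(s_n)-(\gamma+\varrho t/\sqrt n)s_n\bigr]=-\frac{t^2}{2}+\frac{t^3}{\sqrt n}\zeta\Big(\frac{t}{\sqrt n}\Big),
\end{equation*}
where $\zeta$ is precisely the Cramér series of the statement, with coefficients given by the polynomials in $\gamma_m=\Lambda^{(m)}(0)$ listed above.

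The remaining $\xi$-integral is then handled by the spectral splitting of Lemma \ref{lemma-R-S}, but centered at $s_n$ rather than $0$: apply $\oP^n_{s_n+i\xi/\sqrt n}=\lambda^n_{s_n+i\xi/\sqrt n}\oN_{s_n+i\xi/\sqrt n}+\oQ^n_{s_n+i\xi/\sqrt n}$; on the central range $|\xi|\leq\xi_0\sqrt n$, a second-order Taylor expansion of $\Lambda$ at $s_n$ combined with the prefactor yields Gaussian decay of order $e^{-\varrho^2\xi^2/2}$; on the tail $\xi_0\sqrt n\leq|\xi|\leq\delta^{-2}\sqrt n$ one uses Proposition \ref{prop:spec-Pxi}; the $\oQ^n$-contribution is negligible by Proposition \ref{prop:spectral-decomp}(5). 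In the limit, $\oN_{s_n+i\xi/\sqrt n}(\Phi_{n,\xi}+\Phi_n^\star)(x)\to\oN_0(\varphi)=\int_{\P^{d-1}}\varphi\,\diff\nu$ and $\widehat\psi(\xi/\sqrt n)\to\widehat\psi(0)=\int_\R\psi\,\diff s$, so the Gaussian $\int_\R e^{-\varrho^2\xi^2/2}\,\diff\xi=\sqrt{2\pi}/\varrho$ combines with the tilt prefactor to produce the claimed right-hand side. The main obstacle is maintaining uniform error bounds in $t$ throughout the entire window $|t|=o(\sqrt n)$: the perturbed operators $\oN_{s_n+\cdot}$ and $\oQ^n_{s_n+\cdot}$ depend on $s_n$, the cubic remainder $O(\xi^3/\sqrt n)$ from the Taylor expansion at $s_n$ must integrate to a negligible quantity, and the partition-of-unity sum must be re-estimated with the extra weight $e^{s_n|u|}$ produced by the tilt. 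This last point is exactly where the stronger exponential moment assumption built into $\Lc_{s_0}$ enters the argument decisively; the Fourier regularization via Lemma \ref{lemma:conv-fourier-approx} must likewise be carried out uniformly over the moderate-deviation window.
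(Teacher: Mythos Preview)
Your saddle-point/contour-shift plan is morally the same as the paper's change-of-measure approach (they are Fourier and probabilistic faces of the same Cram\'er tilt), but the \emph{order of operations} you propose creates a genuine gap. You apply the partition of unity and Corollary~\ref{cor:LDT-admissible} \emph{before} tilting, exactly as in Lemma~\ref{lemma:llt-ineq-1}. That step incurs an additive error $O(1/\sqrt n)$ coming from $\sqrt n\,\mathbf P(|u(S_nx)|\geq A\log n)$ under the \emph{original} measure $\mu^{\otimes n}$. In Theorem~\ref{thm:LLT-general} this is harmless because the target is $O(1)$, but here the target is $e^{-t^2/2+\cdots}$, which for $t$ anywhere near the edge of the window $|t|=o(\sqrt n)$ is far smaller than $1/\sqrt n$. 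Your later contour shift cannot repair this: the error was already committed before Fourier inversion and is not multiplied by the prefactor.

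The paper therefore reverses the order: it first performs the change of measure to $\Q^x_s$ via \eqref{eq:change-of-measure}, which factors out $e^{-t^2/2+\frac{t^3}{\sqrt n}\zeta(\cdot)}$ exactly, and only \emph{then} applies the partition of unity to the residual quantity $\Jc_n$. At that point the tail $\Jc_n^2$ is controlled by Proposition~\ref{prop:LDT-admissible-2}, the large-deviation estimate \emph{under} $\Q^x_s$; this is precisely why Definition~\ref{def:admissible-2} demands condition~(1') with respect to the tilted stationary measures $\pi_s$, not just $\nu$. Your final paragraph hints at a weight $e^{s_n|u|}$ appearing in the $\oN$-estimates, but the decisive use of $\Lc_{s_0}$ is earlier, in replacing Corollary~\ref{cor:LDT-admissible} by its tilted analogue. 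A secondary issue: on the tail $\xi_0\sqrt n\leq|\xi|\leq\delta^{-2}\sqrt n$ you invoke Proposition~\ref{prop:spec-Pxi}, but after the contour shift you are dealing with $\oP^n_{s_n+i\xi/\sqrt n}$, not $\oP^n_{i\xi}$; one needs the analogous spectral bound for the tilted operators $\oR_{s,i\xi}$ of \eqref{eq:perturbed-markov-s}, which the paper imports from \cite{xiao-grama-liu}.
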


When $u=0$ or $u(x) = \log d(x,H_y)$ and $\Phi$ is of the form $\Phi(s,w) = \psi(s) \varphi(w)$ with $\psi$ continuous with compact support in $\R$ and $\varphi \in \Cc^\alpha(\P^{d-1})$, the above result is due to Xiao-Grama-Liu, see \cite{xiao-grama-liu,xiao-grama-liu:coeff}. As for many of the known limit theorems for products of random matrices, the case of the norm cocycle ($u=0$) is treated via spectral theory by adapting standard techniques from the case of sums of independent random variables \cite{petrov}, see \cite{xiao-grama-liu}. The case $u(x) = \log d(x,H_y)$, which by \eqref{eq:coeff-split} encode the coefficients of $S_n$, can be handled by a well-chosen cut-off procedure, using the approach introduced in \cite{DKW:BE-LLT-coeff}, see  \cite[Section 7]{xiao-grama-liu:coeff}. This approach also works here thanks to Lemma \ref{lemma:partition-of-unity}. For this reason, we omit the full proof of  Theorem \ref{thm:LLT-D} and only highlight its main steps. In order to do that, some notation needs to be introduced. We refer to \cite{xiao-grama-liu} for more details.

Consider the perturbed Markov operators $\oP_z$ defined in \eqref{eq:markov-op-def}. By Proposition \ref{prop:spectral-decomp} we have a decomposition  $\oP_z = \lambda_z \oN_z + \oQ_z$ for $z$ small, where $\lambda_z$ is the leading eigenvalue of $\oP_z$ and $\oN_z$ is a rank-one projection. Let $r_z \in \Cc^\alpha(\P^{d-1})$ be the eigenfunction of $\oP_z$ associated with $\lambda_z$ normalized so that $\langle \nu, r_z \rangle = 1$.

Let $s_0 > 0$ be a small constant. For $s \in (-s_0,s_0)$, it can be shown that $\lambda_s >0$ and $r_s$ is a strictly positive function, see Proposition \ref{prop:spectral-decomp}. In this case, for $n \geq 1$, the functions
$$q_n^s(x,g):= \frac{e^{s\sigma(g,x)}}{\lambda_s^n} \frac{r_s(gx)}{r_s(x)} \quad x \in \P^{d-1}, g \in G$$
are such that $\Q^x_{s,n}:= q_n^s(x,S_n) \mu^{\otimes n}$, $n \geq 1$, form a projective system of probability measures  on $G^{\N^*}$. By Kolmogorov extension theorem, there exists a unique probability measure $\Q^x_s$
on $G^{\N^*}$ having $\Q^x_{s,n}$ as marginals. It follows from this construction that
\begin{align} 
 \E_{\Q^x_s} \big(\Psi(g_1,\sigma(g_1,x),& \ldots, S_n,\sigma(S_n,x)) \big) \nonumber \\ &= \frac{1}{\lambda_s^n r_s(x)} \E\Big( r_s(S_n x) e^{s \sigma(S_n,x)} \Psi\big(g_1,\sigma(g_1,x), \ldots, S_n,\sigma(S_n,x) \big) \Big) \label{eq:change-of-measure}
\end{align}
for any bounded measurable function $\Psi$ on $(\R\times\P^{d-1})^n$. Observe that $\Q^x_0 = \mu^{\otimes \N^*}$, where $\mu$ is the original probability measure on $G$ defining the random walk $S_n$.

The measure $\Q^x_s$ defines a new Markov chain on $\P^{d-1}$ whose transition operator is $\oU_s \varphi = \lambda_s^{-1} r_s^{-1} \oP_s(\varphi r_s)$. When $s=0$, we recover the original Markov process induced by $\mu$. The spectral theory for the operator $\oU_s$ and the perturbations
\begin{equation} \label{eq:perturbed-markov-s}
\oR_{s,i\xi} \varphi(x) := \E_{\Q^x_s} \Big( e^{i\xi (\sigma(g,x)- \Lambda'(s))} \varphi(g x) \Big),
\end{equation}
 where $\Lambda(s) = \log \lambda_s$ is defined above, is analogous to that of $\oP$ and its perturbations $\oP_{i\xi}$ discussed in Section \ref{sec:prelim}. In particular, $\oU_s$ admits a unique invariant measure, denoted by $\pi_s$, and it has a spectral gap in a suitable H\"older space, see \cite{xiao-grama-liu}.

\begin{definition} \rm \label{def:admissible-2} 
Let $s_0 \in\R^+$, $\eta_*>0$, $0< \alpha_* \leq 1$ and $A_*>0$ be constants. A function  $u:\P^{d-1}\to\R\cup \{\pm \infty\}$ belongs to the class $\Lc_{s_0}(\eta_*, \alpha_*,A_*)$ if $u$ is continuous, satisfies Property (2) from Definition  \ref{def:admissible} and
\begin{enumerate}
\item[(1')] If $s \in (-s_0,s_0)$, then $\pi_s \{|u|\geq t\} \leq A_*e^{-\eta_* t}$ for every $t\geq 0$.
\end{enumerate}
\end{definition}

Observe that $\pi_0 = \nu$, so condition (1') above contains condition (1) from Definition \ref{def:admissible}. The following result is contained in \cite{grama-quint-xiao}.

\begin{lemma}  \label{lemma:log-dist-admissible-2}
 There are constants $s_0 \in\R$, $\eta_*>0$  and $A_*>0$ such that for every $y\in (\P^{(d-1)})^*$ the function $\log d(x,H_y)$ belongs to $\Lc_{s_0}(\eta_*, 1, A_*)$.
\end{lemma}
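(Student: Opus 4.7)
The H\"older-type condition (Property (2) of Definition \ref{def:admissible}) is incorporated verbatim into the definition of $\Lc_{s_0}(\eta_*, \alpha_*, A_*)$, and has already been verified for $u(x) = \log d(x, H_y)$ in the proof of Lemma \ref{lemma:log-dist-admissible}, with constants depending only on $d$ and independent of $y$. What remains to check is therefore Property (1'). Since $\P^{d-1}$ has diameter one, $|u(x)| \geq t$ is equivalent to $d(x, H_y) \leq e^{-t}$ for $t > 0$, so the task reduces to producing constants $s_0, \eta_*, A_* > 0$ with
\[
\pi_s\big(\B(H_y, r)\big) \leq A_* r^{\eta_*} \quad \text{for all } y \in (\P^{d-1})^*,\ s \in (-s_0, s_0),\ 0 < r \leq 1,
\]
which is the $s$-deformed analogue of Guivarc'h's Proposition \ref{prop:regularity}.

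My plan is to imitate Guivarc'h's original argument, applied now to the perturbed Markov operator $\oU_s$ and its invariant measure $\pi_s$ instead of to $\oP$ and $\nu$. The starting observation is that $\oU_s \varphi = \lambda_s^{-1} r_s^{-1} \oP_s(\varphi r_s)$ is a genuine Markov operator (positivity-preserving, and $\oU_s \mathbf{1} = \mathbf{1}$ by the eigenvalue equation for $r_s$), and that by Proposition \ref{prop:spectral-decomp} the analytic dependence of $\oP_s$, $\lambda_s$ and $r_s$ on $s$ near $0$, together with uniform two-sided bounds on $r_s$ coming from its positivity and continuity, yield a spectral gap for $\oU_s$ on $\Cc^\alpha(\P^{d-1})$ that is uniform for $s \in (-s_0, s_0)$. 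This produces $\pi_s$ as the unique invariant probability, and gives $\lp \pi_s, \varphi\rp = \lim_n \oU_s^n \varphi(x)$ at a geometric rate uniform in $s$.

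Next, by the change-of-measure formula \eqref{eq:change-of-measure} one has
\[
\oU_s^n \varphi(x) = \frac{1}{\lambda_s^n r_s(x)} \E\big( r_s(S_n x) e^{s \sigma(S_n, x)} \varphi(S_n x) \big).
\]
Taking $\varphi$ to be a H\"older regularization $\varphi_y^r$ of $\mathbf{1}_{\B(H_y, r)}$ with $\varphi_y^r \geq \mathbf{1}_{\B(H_y, r)}$, support in $\B(H_y, 2r)$ and $\norm{\varphi_y^r}_{\Cc^\alpha} \lesssim r^{-\alpha}$, and applying H\"older's inequality with a large exponent $p$ to separate the tilting weight from the indicator, I would bound the right-hand side by a product $(\lambda_s^{-n} \lambda_{qs}^{n/q}) \cdot \E(\varphi_y^r(S_n x))^{1/p}$, up to uniform constants involving $r_s$. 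The second factor is $\lesssim r^{\eta/p}$ once $n \sim c |\log r|$, by combining Proposition \ref{prop:regularity} for $\nu$ with the spectral gap of $\oP$ applied to $\varphi_y^r$. The first factor is controlled by Taylor expansion of $\Lambda(s) = \log \lambda_s$ at $s = 0$: the linear term cancels in $\Lambda(qs)/q - \Lambda(s)$, giving $\lambda_s^{-n} \lambda_{qs}^{n/q} \leq e^{C s^2 n} \leq r^{-C' s^2}$. Shrinking $s_0$ makes this last exponent arbitrarily small compared to $\eta/p$, so one finally obtains $\pi_s(\B(H_y, r)) \leq A_* r^{\eta_*}$ with a positive exponent $\eta_*$ uniform in $s \in (-s_0, s_0)$, $y$ and $r$.

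The main technical obstacle is precisely this uniformity in $s$: verifying that the spectral gap of $\oU_s$, the two-sided bounds on $r_s$ and on $\lambda_s^{\pm 1}$, and the cancellation of the linear term in the above Taylor expansion all hold uniformly on a common interval, and that the tilt $e^{s\sigma(S_n, x)}$ is absorbed at the cost of a power $r^{-\kappa(s)}$ with $\kappa(s) \to 0$ as $s \to 0$. All three points are consequences of the analytic dependence of the spectral data in Proposition \ref{prop:spectral-decomp} and the finite exponential moment of $\mu$; they are continuity statements rather than genuine new analytic difficulties, and they are carried out in detail in \cite{grama-quint-xiao}.
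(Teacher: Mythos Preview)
The paper does not actually prove this lemma: it simply states ``The following result is contained in \cite{grama-quint-xiao}'' and moves on. Your proposal is therefore already more detailed than what the paper offers, and it ends up deferring to the same reference for the technical verifications.

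Your sketch is sound. The reduction of Property~(1') to a uniform regularity estimate $\pi_s(\B(H_y,r))\le A_* r^{\eta_*}$ is correct, and the strategy you outline---use stationarity to write $\pi_s(\varphi)\le \sup_x \oU_s^n\varphi(x)$ for a well-chosen $n\sim c|\log r|$, express $\oU_s^n$ via the change-of-measure formula \eqref{eq:change-of-measure}, split off the tilting weight $e^{s\sigma(S_n,x)}$ by H\"older, and control the resulting factor $\lambda_s^{-n}\lambda_{qs}^{n/q}$ through the cancellation of the linear term in the Taylor expansion of $\Lambda$---is a legitimate route to the result. The key observation that $\Lambda(qs)/q-\Lambda(s)=O(s^2)$, so that the tilt costs only $r^{-\kappa(s)}$ with $\kappa(s)\to 0$, is exactly right. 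One small point to make explicit: to pass from $\oU_s^n\varphi_y^r(x)$ to $\pi_s$, you should note that by invariance $\lp\pi_s,\varphi_y^r\rp=\lp\pi_s,\oU_s^n\varphi_y^r\rp\le\sup_x\oU_s^n\varphi_y^r(x)$ for \emph{any} $n$, which is what lets you choose $n$ depending on $r$.
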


 Using the properties of the operators $\oU_s$ described above, we can prove the following analogue of Proposition \ref{prop:LDT-admissible}. When $u(x) = \log d(x,H_y)$, a similar property was obtained in \cite{grama-quint-xiao}.

\begin{proposition}   \label{prop:LDT-admissible-2}
Let $u \in \Lc_{s_0}(\eta_*, \alpha_*,A_*)$. There exist constants $c>0$ and $n_0, N \in \N$  such that, for $x\in \P^{d-1}$ and $s \in (-s_0,s_0)$, one has 
	$$   \Q^x_s \big\{|u(S_\ell x)| \geq  m  \big\}  \leq e^{-c m} \quad \text{for} \quad \ell \geq N m \geq n_0, \,\, \ell, m \in \N.$$
\end{proposition}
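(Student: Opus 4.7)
The plan is to follow the proof of Proposition \ref{prop:LDT-admissible} closely, simply replacing the Markov operator $\oP$ and its invariant measure $\nu$ by the tilted transition operator $\oU_s$ and its invariant measure $\pi_s$, and tracking uniformity in $s \in (-s_0,s_0)$. The first step is to rewrite the probability in spectral terms. Applying the change-of-measure identity \eqref{eq:change-of-measure} to the function $\Psi = \mathbf 1_{|u(S_\ell x)|\geq m}$, which depends only on the endpoint $S_\ell x$, and iterating the relation $\oU_s\varphi = \lambda_s^{-1}r_s^{-1}\oP_s(\varphi r_s)$, one obtains
\[
\Q^x_s\{|u(S_\ell x)| \geq m\} \;=\; \oU_s^\ell \mathbf 1_{|u|\geq m}(x).
\]

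Next, I would construct a cutoff exactly as in Proposition \ref{prop:LDT-admissible}: set $u_m := \min(|u|,m)$, $v_m := u_m - u_{m-1}$, so that $0 \leq v_m \leq 1$, $v_m$ is supported in $\{|u|\geq m-1\}$, and $v_m = 1$ on $\{|u|\geq m\}$. Property (2) of admissibility (shared by Definitions \ref{def:admissible} and \ref{def:admissible-2}) gives $\|u_m\|_{\Cc^{\alpha_*}} \lesssim e^{\alpha_* m}$, and by interpolation $\|v_m\|_{\Cc^\alpha} \lesssim e^{\alpha m}$ for every $0<\alpha\leq\alpha_*$. Then I would invoke the uniform spectral gap of $\oU_s$ on $\Cc^\alpha(\P^{d-1})$ established in \cite{xiao-grama-liu}: after possibly shrinking $s_0$, for all sufficiently small $\alpha>0$ there exist constants $c_0>0$ and $0<\tau=e^{-\beta}<1$, independent of $s\in(-s_0,s_0)$, such that
\[
\|\oU_s^\ell \varphi - \langle \pi_s,\varphi\rangle \mathbf 1\|_\infty \leq c_0 \tau^\ell \|\varphi\|_{\Cc^\alpha} \quad \text{for all } \ell \geq 0.
\]

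Applying this to $v_m$ and combining with Property (1') from Definition \ref{def:admissible-2} (which gives $\langle \pi_s, v_m\rangle \leq \pi_s\{|u|\geq m-1\}\leq A_* e^{-\eta_*(m-1)}$), I obtain
\[
\oU_s^\ell v_m(x) \;\lesssim\; e^{-\eta_*(m-1)} + e^{\alpha m}e^{-\beta \ell}.
\]
Fixing $\alpha$ small and then $N$ large enough that $\beta N - \alpha > 0$, for $\ell \geq Nm$ both terms decay exponentially in $m$, yielding the desired estimate $e^{-cm}$ uniformly in $x \in \P^{d-1}$ and $s \in (-s_0,s_0)$.

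The only real obstacle is the uniform spectral gap for the family $\oU_s$ on $\Cc^\alpha(\P^{d-1})$ across $s \in (-s_0,s_0)$, including the uniform bound on $r_s$, $r_s^{-1}$ and their H\"older norms needed to pass the spectral gap from $\oP_s$ to $\oU_s$ via $\oU_s\varphi = \lambda_s^{-1}r_s^{-1}\oP_s(\varphi r_s)$. This is a standard analytic perturbation argument rooted in Theorem \ref{thm:spectral-gap} and Proposition \ref{prop:spectral-decomp}, carried out in detail in \cite{xiao-grama-liu}, so my strategy is simply to cite it; once it is in hand, the cutoff-plus-spectral-gap argument above is entirely parallel to the proof of Proposition \ref{prop:LDT-admissible}.
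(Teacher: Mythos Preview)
Your proposal is correct and matches the paper's approach exactly: the paper does not spell out the proof of this proposition but simply states that it is the analogue of Proposition~\ref{prop:LDT-admissible}, proved using the spectral properties of the tilted operators $\oU_s$ in place of $\oP$. Your identification of the key input (the uniform-in-$s$ spectral gap for $\oU_s$ from \cite{xiao-grama-liu}) and your derivation of $\Q^x_s\{|u(S_\ell x)|\geq m\}=\oU_s^\ell\mathbf 1_{|u|\geq m}(x)$ via \eqref{eq:change-of-measure} are precisely what is needed, after which the cutoff argument with $v_m$ carries over verbatim.
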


We now sketch the proof of Theorem \ref{thm:LLT-D}. As in the proof of Theorem \ref{thm:LLT-general} given in Section  \ref{sec:LLT}, by standard approximation arguments, it is enough to prove Theorem \ref{thm:LLT-D} when $\Phi = \psi \varphi$, where $\psi$ is a continuous function with compact support on $\R$ and $\varphi \in \Cc^\alpha(\P^{d-1})$. In this case,  we need to show that
\begin{equation} \label{eq:LLT-D-main-limit}
 \Ic_n (t) =  \frac{1}{\sqrt{2 \pi} \varrho}  e^{-\frac{t^2}{2} + \frac{t^3}{\sqrt n} \zeta  (\frac{t}{\sqrt n})}\Big( \int_{\P^{d-1}} \varphi \, \diff \nu \, \int_{\R} \psi(v) \,\diff v + o(1) \Big)
\end{equation}
as $n \to \infty$, where
\begin{equation*} 
\Ic_n (t) := \sqrt{n}  \,\mathbf E \Big( \psi\big(\sigma(S_n,x)+u(S_nx) - n \gamma - \sqrt n \varrho t \big) \, \varphi(S_n x) \Big)
\end{equation*}

Recall that $\Lambda(s) = \log \lambda_s$ and $\lambda_s = 1 + \gamma s + O(s^2)$, see Proposition \ref{prop:spectral-decomp} . It follows that $\Lambda'(0) = \gamma$. For each $t$ such that $|t| = o(\sqrt n)$ choose $s \approx \frac{t}{\varrho \sqrt n}$ such that $\Lambda'(s) - \Lambda'(0) = \frac{\varrho t}{\sqrt n}$. Then, it can be shown that $$s \Lambda'(s) - \Lambda(s) = \frac{t^2}{2n} - \frac{t^3}{n^{3/2}} \zeta \Big( \frac{t}{\sqrt n}\Big),$$
where $\zeta$ is the Cram\' er function introduced above. Together with the change of measure formula \eqref{eq:change-of-measure}, $\Ic_n (t)$ is equal to
  \begin{align*}
  r_s(x) e^{-n(s \Lambda'(s) - \Lambda(s))} \sqrt n  \E_{\Q^x_s} \Big(  e^{-s(\sigma(S_n,x) - n \Lambda'(s))} \psi \big(\sigma(S_n,x)  + u(S_n x) - n \Lambda'(s)\big) (\varphi r_s^{-1})(S_n x) \Big).
  \end{align*}
 Notice that   $e^{-n(s \Lambda'(s) - \Lambda(s))} =  e^{-\frac{t^2}{2} + \frac{t^3}{\sqrt n} \zeta  (\frac{t}{\sqrt n})}$. Also, $s \to 0$ when $n \to \infty$, so $r_s(x) \to r_0(x) = 1$ uniformly. Therefore, if we set
 \begin{equation*}
 \Jc_n(f) (s) := \sqrt n   \, \E_{\Q^x_s} \Big(e^{-s(\sigma(S_n,x) - n \Lambda'(s))} f \big(\sigma(S_n,x)  + u(S_n x) - n \Lambda'(s), S_n x\Big)
  \end{equation*}
for a function $f$ on $\R \times \P^{d-1}$, we see that proving \eqref{eq:LLT-D-main-limit} is equivalent to showing that
  \begin{equation} \label{eq:LLT-D-main-limit-2}
  \lim_{n\to \infty} \Big| \Jc_n (\psi \cdot \varphi r_s^{-1}) (s) -  \frac{1}{\sqrt{2 \pi} \varrho} \int_{\P^{d-1}} \varphi \, \diff \nu \, \int_{\R} \psi(v) \,\diff v\Big| =0.
  \end{equation}

In order to prove \eqref{eq:LLT-D-main-limit-2},  similar arguments as the ones from Section \ref{sec:LLT} can be used.  Using the partition $\chi_k$, $k \in \Z$ from Lemma \ref{lemma:partition-of-unity}, we can write $$ \Jc_n (\psi \cdot \varphi r_s^{-1}) (s)  = \sum_{|k| \leq A \log n} \Jc_n(\psi \cdot \chi_k \varphi r_s^{-1}) (s) + \Jc_n(\psi \cdot \Phi_{n}^{\star}  r_s^{-1}) (s):= \Jc^1_n (s) + \Jc^2_n (s),$$
where   $\Phi_{n}^{\star}  (w):= \varphi(w) - \sum_{|k| \leq A \log n} (\chi_k\varphi)(w)$ as in the preceding sections and $A>0$ is a large constant. By choosing $A>0$ large enough and using Proposition \ref{prop:LDT-admissible-2}, one can show that $|\Jc^2_n (s)| = o(1)$. Hence, it remains to show that 
  \begin{equation}   \label{eq:LLT-D-main-limit-3}
  \lim_{n\to \infty} \Big| \Jc^1_n (s) -  \frac{1}{\sqrt{2 \pi} \varrho} \int_{\P^{d-1}} \varphi \, \diff \nu \, \int_{\R} \psi(v) \,\diff v\Big| =0.
  \end{equation}
  
  By approximating $\psi$ above and below by the functions $\psi_\delta^\pm$ given by Lemma \ref{lemma:conv-fourier-approx}, considering suitable translates $\psi_{s,k}^\pm$ of $\psi_\delta^\pm$ and using the Fourier inversion formula as in Section \ref{sec:LLT}, the quantity $\Jc_n(\psi \cdot \chi_k \varphi r_s^{-1}) (s)$ can be estimated in terms of $\widehat{\psi_\delta^\pm}$ and the operator $\oR_{s,i\xi}$ defined in \eqref{eq:perturbed-markov-s} acting on $\chi_k \varphi r_s^{-1}$. Compare with \eqref{eq:E_n-psi+}, which corresponds to the case $s=0$. Then, the spectral properties of $\oR_{s,i\xi}^n$ from  \cite{xiao-grama-liu}, which are analogous to the ones of Proposition \ref{prop:spectral-decomp}, can be used in the same spirit of the estimates from Section \ref{sec:LLT} to prove \eqref{eq:LLT-D-main-limit-3}, compare with \cite[Section 7]{xiao-grama-liu:coeff}. Theorem \ref{thm:LLT-D} follows.

\end{document}